\tikzset{labelsize/.style={font=\scriptsize}}
\tikzset{2cell/.style={-implies,double,double equal sign distance}}
\newcolumntype{N}{@{}m{0pt}@{}}
\newcommand{\labeleditem}[1]{
\item[\text{#1}]\protected@edef\@currentlabel{\text{#1}}\phantomsection
}
\numberwithin{equation}{subsection}
\declaretheorem[style=plain,sibling=equation,name=Theorem]{theorem}
\declaretheorem[style=plain,sibling=theorem,name=Lemma]{lemma}
\declaretheorem[style=plain,sibling=theorem,name=Proposition]{proposition}
\declaretheorem[style=plain,sibling=theorem,name=Corollary]{corollary}
\declaretheorem[style=definition,qed=$\blacksquare$,sibling=theorem,name=Definition]{definition}
\declaretheorem[style=definition,qed=$\blacksquare$,sibling=theorem,name=Remark]{remark}
\declaretheorem[style=definition,qed=$\blacksquare$,sibling=theorem,name=Construction]{construction}
\crefname{theorem}{Theorem}{Theorems}
\crefname{section}{Section}{Sections}
\crefname{subsection}{Subsection}{Subsections}
\crefname{definition}{Definition}{Definitions}
\crefname{notation}{Notation}{Notations}
\crefname{example}{Example}{Examples}
\crefname{remark}{Remark}{Remarks}
\crefname{appendix}{Appendix}{Appendices}
\crefname{equation}{}{}
\crefname{construction}{Construction}{Constructions}
\crefname{corollary}{Corollary}{Corollaries}
\crefname{proposition}{Proposition}{Propositions}
\crefname{lemma}{Lemma}{Lemmas}
\mathchardef\mhyphen="2D
\newcommand{\Set}{\mathbf{Set}}
\newcommand{\cat}[1]{{\mathcal{#1}}}
\newcommand{\N}{\mathbb{N}}
\newcommand{\G}{\mathbb{G}}
\newcommand{\op}{\mathrm{op}}
\newcommand{\ppair}[1]{\langle #1\rangle} 
\newcommand{\undg}[1]{{{\left|{#1}\right|}}} 
\newcommand{\unds}[1]{{#1}_{\mathrm{s}}} 
\newcommand{\Kcomp}{\mathbin{\odot}} 
\newcommand{\ar}{\mathop{\mathrm{ar}}}
\newcommand{\StrCats}[1]{{\mathbf{Str}\mhyphen{#1}\mhyphen\mathbf{Cat}_\mathrm{s}}} 
\newcommand{\WkCats}[1]{{\mathbf{Wk}\mhyphen{#1}\mhyphen\mathbf{Cat}_\mathrm{s}}} 
\newcommand{\WkCat}[1]{{\mathbf{Wk}\mhyphen{#1}\mhyphen\mathbf{Cat}}} 
\newcommand{\GSet}{\mathbf{GSet}}
\newcommand{\id}[3]{\mathrm{id}_{#1}^{#2}(#3)} 
\newcommand{\newid}{\mathrm{id}}
\newcommand{\comp}[2]{\mathbin{{\ast}_{#1}^{#2}}}  
\newcommand{\spi}{\mathrm{sp}}
\newcommand{\OO}{G}
\newcommand{\kk}{\boldsymbol{k}}
\newcommand{\kkd}{\boldsymbol{k'}}
\newcommand{\uu}{\boldsymbol{u}}
\newcommand{\vv}{\boldsymbol{v}}
\newcommand{\ww}{\boldsymbol{w}}
\newcommand{\ATF}{\mathbf{ATF}}
\newcommand{\AATF}{\mathbb{ATF}}
\newcommand{\SSplEpi}{\mathbb{S}\mathbf{plEpi}}
\newcommand{\cod}{\mathrm{cod}}
\begin{document}
\title{$\omega$-weak equivalences between weak $\omega$-categories}
\author{Soichiro Fujii}
\address{School of Mathematical and Physical Sciences, Macquarie University, NSW 2109, Australia}
\address{Department of Mathematics and Statistics, Faculty of Science, Masaryk University, Kotl\'a\v{r}sk\'a 2, 611 37 Brno, Czech Republic}
\email{s.fujii.math@gmail.com}

\author{Keisuke Hoshino}
  \address{Research Institute for Mathematical Sciences, Kyoto University, Kyoto, Japan}
  \email{hoshinok@kurims.kyoto-u.ac.jp}

\author{Yuki Maehara}
  \address{Research Institute for Mathematical Sciences, Kyoto University, Kyoto, Japan}
  \email{ymaehar@kurims.kyoto-u.ac.jp}

\date{June 2, 2025}

\keywords{Weak $\omega$-category, $\omega$-weak equivalence, 2-out-of-3 property.}
\subjclass[2020]{18N65, 18N40, 18N20}

\begin{abstract}
We study $\omega$-weak equivalences between weak $\omega$-categories in the sense of Batanin--Leinster. 
Our $\omega$-weak equivalences are strict $\omega$-functors satisfying essential surjectivity in every dimension, and when restricted to those between strict $\omega$-categories, they coincide with the weak equivalences in the model category of strict $\omega$-categories defined by Lafont, M\'etayer, and Worytkiewicz.
We show that the class of $\omega$-weak equivalences has the 2-out-of-3 property. 
We also consider a generalisation of $\omega$-weak equivalences, defined as weak $\omega$-functors (in the sense of Garner) satisfying essential surjectivity, and show that this class also has the 2-out-of-3 property. 
\end{abstract}

\maketitle

\section{Introduction}
A \emph{weak $\omega$-category} in the sense of Leinster \cite{Leinster_book}, based on an earlier definition by Batanin \cite{Batanin_98}, is an Eilenberg--Moore algebra for the ``universal weakening'' (or more precisely, the universal cofibrant replacement; see \cite{Garner_univ}) of the monad for strict $\omega$-categories on the category of globular sets.
In our previous work \cite{FHM1}, we studied weakly invertible cells \emph{in} such a weak $\omega$-category.
The purpose of this paper is to investigate weakly invertible $\omega$-functors \emph{between} weak $\omega$-categories.

Although one might be tempted to define the weakly invertible $\omega$-functors between weak $\omega$-categories as the weakly invertible 1-cells in a putative ``weak $\omega$-category of all (small) weak $\omega$-categories,'' the latter has not yet been defined; hence, we adopt a more hands-on definition in this paper.
We first define \emph{$\omega$-weak equivalences} as the strict $\omega$-functors between weak $\omega$-categories which are \emph{essentially surjective} in every dimension.
One of the main purposes of this paper is to prove that these $\omega$-weak equivalences enjoy the \emph{2-out-of-3 property} (\cref{thm:2-out-of-3}); that is, given a composable pair $f \colon X \to Y$, $g \colon Y \to Z$ of strict $\omega$-functors between weak $\omega$-categories, if any two of $f$, $g$, and $gf$ are $\omega$-weak equivalences then so is the third.
We believe that this result is a necessary step towards the construction of a (left semi-)model structure on the category of weak $\omega$-categories and strict $\omega$-functors, in which the weak equivalences are the $\omega$-weak equivalences.

In the case of \emph{strict} $\omega$-categories, such a model structure was constructed by Lafont, M\'etayer, and Worytkiewicz \cite{Lafont_Metayer_Worytkiewicz_folk_model_str_omega_cat}.
In particular, they established the 2-out-of-3 property of the $\omega$-weak equivalences between \emph{strict} $\omega$-categories; our result is its generalisation to the context of \emph{weak} $\omega$-categories.
Although our proof is partly inspired by theirs, we also face a brand new kind of complication that arises from the fact that we are dealing with \emph{weak} $\omega$-categories.
For example, there is a seemingly innocuous step in their argument which uses the fact that a $2$-cell $w$ (or more generally an $n$-cell $w$, with $n\geq 2$, whose $0$-source is $x$) in a strict $\omega$-category, as in the left diagram below, is the same as the composite $\id{}{}{x}\comp{0}{}w$, as in the right diagram below.
\[
\begin{tikzpicture}[baseline=-\the\dimexpr\fontdimen22\textfont2\relax ]
      \node(20) at (0,0) {$x$};
      \node(21) at (2,0) {$y$};

      \draw [->,bend left=30]  (20) to node[auto, labelsize] {$u$} (21);
      \draw [->,bend right=30] (20) to node[auto, swap,labelsize] {$v$} (21);

      \draw [2cell]  (1,0.25) to node[auto,labelsize] {$w$} (1,-0.25);
\end{tikzpicture}
\qquad\qquad
\begin{tikzpicture}[baseline=-\the\dimexpr\fontdimen22\textfont2\relax ]
      \node(1) at (-2,0) {$x$};
      \node(20) at (0,0) {$x$};
      \node(21) at (2,0) {$y$};

      \draw [->]  (1) to node[auto, labelsize] {$\id{}{}{x}$} (20);
      \draw [->,bend left=30]  (20) to node[auto, labelsize] {$u$} (21);
      \draw [->,bend right=30] (20) to node[auto, swap,labelsize] {$v$} (21);

      \draw [2cell]  (1,0.25) to node[auto,labelsize] {$w$} (1,-0.25);
\end{tikzpicture}
\]
When generalising such an argument to the context of \emph{weak} $\omega$-categories, it is natural to attempt to formalise the intuition that $\id{}{}{x} \comp{0}{} w$ is still ``essentially just $w$.''
However, in doing so, one readily realises that $w \colon u \to v$ and $\id{}{}{x}\comp{0}{}w \colon \id{}{}{x}\comp{0}{}u \to \id{}{}{x}\comp{0}{}v$ may not even be parallel, which makes unclear what the formal statement should be.

In order to deal with this sort of problem, we introduce the \emph{padding} construction (\cref{con:padding}).
Armed with a kind of naturality (\cref{lem:padding-a-pasting}), this construction allows us to go back and forth between non-parallel composites arising from the same pasting diagram.
Since ``applying'' the unit, associativity, or interchange law to compositions of high codimension (i.e., composition of $n$-cells along $k$-cells for $k < n-1$) can always result in such non-parallel composites, we believe the padding construction to be of interest even beyond the scope of this paper.

After establishing the 2-out-of-3 property, we extend the scope of study to the more general class of \emph{weak $\omega$-weak equivalences}, where the first ``weak'' indicates that these maps are not necessarily strictly $\omega$-functorial.
The notion of weak $\omega$-functor that we adopt is due to Garner \cite{Garner_homomorphisms}, which is essentially a span of strict $\omega$-functors with the left-pointing leg a trivial fibration.
In addition to generalising the 2-out-of-3 property to the larger class (\cref{prop:2-out-of-3-for-weak-omega-weak-eq}), we characterise the weak $\omega$-weak equivalences in terms of the essential $\omega$-surjectivity of their ``underlying'' globular maps (\cref{prop:weak-omega-weak-eq-via-und-glob}), establishing an analogy with the accepted notions of ``equivalence'' in low-dimensional cases such as biequivalences and triequivalences.

\subsection*{Related work}
At the conference \emph{$(\infty,n)$-Categories and Their Applications} held in Utrecht in April 2024, where the third-named author presented this work, we learnt from Cl\'emence Chanavat that a similar technique to our padding construction was being developed independently in the setting of diagrammatic sets.
This work later appeared in her joint paper with Amar Hadzihasanovic \cite{Chanavat_Hadzihasanovic_equivalences}.

\subsection*{Outline of the paper}
In \cref{sec:weak-omega-cat}, we recall the Batanin--Leinster notion of weak $\omega$-category and collect necessary results from our previous work on weakly invertible cells \cite{FHM1}.
We introduce the notion of $\omega$-weak equivalence and prove that they enjoy the 2-out-of-3 property in \cref{sec:2-out-of-3}.
This result is extended to weakly $\omega$-functorial maps in \cref{sec:weak-weak}.

\section{Preliminaries}
\label{sec:weak-omega-cat}
In this section, we recall Leinster's definition of weak $\omega$-category~\cite{Leinster_book} (which was inspired by an earlier definition by Batanin~\cite{Batanin_98}) and collect necessary results from our previous work~\cite{FHM1}.
This is a shortened version of material from \cite{FHM1}, to which we refer for more details and motivation.

\subsection{Globular sets}
We will write $\G$ for the category generated by the graph
\[
\begin{tikzpicture}[baseline=-\the\dimexpr\fontdimen22\textfont2\relax ]
      \node(0) at (0,0) {$0$};
      \node(1) at (2,0) {$1$};
      \node(d) at (4,0) {$\cdots$};
      \node(n) at (6,0) {$n$};
      \node(d2) at (8,0) {$\cdots$};
      
      \draw [->,transform canvas={yshift=3pt}] (0) to node[auto, labelsize] 
      {$\sigma_0$} (1); 
      \draw [->,transform canvas={yshift=-3pt}] (0) to node[auto, 
      swap,labelsize] 
      {$\tau_0$} (1); 
      \draw [->,transform canvas={yshift=3pt}] (1) to node[auto, labelsize] 
      {$\sigma_1$} (d); 
      \draw [->,transform canvas={yshift=-3pt}] (1) to node[auto, 
      swap,labelsize] 
      {$\tau_1$} (d); 
      \draw [->,transform canvas={yshift=3pt}] (d) to node[auto, labelsize] 
      {$\sigma_{n-1}$} (n); 
      \draw [->,transform canvas={yshift=-3pt}] (d) to node[auto, 
      swap,labelsize] 
      {$\tau_{n-1}$} (n); 
      \draw [->,transform canvas={yshift=3pt}] (n) to node[auto, labelsize] 
      {$\sigma_{n}$} (d2); 
      \draw [->,transform canvas={yshift=-3pt}] (n) to node[auto, 
      swap,labelsize] 
      {$\tau_{n}$} (d2);
\end{tikzpicture}
\]
subject to the relations
\[
\sigma_{n+1}\circ \sigma_{n} =\tau_{n+1}\circ \sigma_{n},\qquad \sigma_{n+1}\circ \tau_{n}=\tau_{n+1}\circ \tau_{n} \qquad (\forall n\in \N).
\]

By a \emph{globular set} we mean a functor $\G^\op\to\Set$, and the category $\GSet$ of globular sets is the presheaf category $[\G^\op,\Set]$.
Given a globular set $X$, we write $X_n$ for the set $Xn$, and we call its elements \emph{$n$-cells} of $X$. 
If $m<n$, we write $s^X_m$ for the composite
\[
\begin{tikzpicture}[baseline=-\the\dimexpr\fontdimen22\textfont2\relax ]
      \node(n) at (0,0) {$X_n$};
      \node(n-1) at (2.2,0) {$X_{n-1}$};
      \node(d) at (4.4,0) {$\cdots$};
      \node(m) at (6.6,0) {$X_m$};
      
      \draw [->] (n) to node[auto, labelsize] 
      {$X\sigma_{n-1}$} (n-1);
      \draw [->] (n-1) to node[auto, labelsize] 
      {$X\sigma_{n-2}$} (d);
      \draw [->] (d) to node[auto, labelsize] 
      {$X\sigma_{m}$} (m);
\end{tikzpicture}
\]
and similarly $t^X_m$ for the composite $X(\tau_m \circ \dots \circ \tau_{n-1})$.
Two $n$-cells $x$ and $y$ of $X$ are \emph{parallel} if
\begin{itemize}
    \item $n=0$, or
    \item $n \ge 1$, $s_{n-1}^X(x)=s_{n-1}^X(y)$ and $t_{n-1}^X(x)=t_{n-1}^X(y)$.
\end{itemize}
For $u \in X_n$ with $n\geq 1$, we write $u\colon x\to y$ to mean $s_{n-1}^X(u)=x$ and $t_{n-1}^X(u)=y$.

We write $\OO^{n}$ for the representable globular set $\G(-,n)$ and $\iota_n \colon \partial \OO^n \to \OO^n$ for the inclusion of its boundary (i.e., the largest proper globular subset).
So $\partial \OO^n$ represents parallel pairs of $(n-1)$-cells for $n \ge 1$, and $\partial \OO^0$ is the initial globular set $\emptyset$.
For any parallel pair $(x,y)$ of $(n-1)$-cells in $X$, we denote the corresponding map by $\ppair{x,y} \colon \partial \OO^n \to X$. 
Note that for $n\geq 1$, an $n$-cell $u$, and a parallel pair $(x,y)$ of $(n-1)$-cells in $X$, we have $u\colon x\to y$ in $X$ if and only if the diagram 
\[
\begin{tikzpicture}[baseline=-\the\dimexpr\fontdimen22\textfont2\relax ]
      \node(00) at (0,1) {$\partial\OO^n$};
      \node(01) at (2,1) {$X$};
      \node(10) at (0,-1) {$\OO^n$};
      
      \draw [->] (00) to node[auto, labelsize] {$\ppair{x,y}$} (01); 
      \draw [->] (00) to node[auto,swap,labelsize] {$\iota_n$} (10);   
      \draw [->] (10) to node[swap,auto,labelsize] {$u$} (01);
\end{tikzpicture}
\]
in $\GSet$ commutes.

\subsection{The monads \texorpdfstring{$T$}{T} and \texorpdfstring{$L$}{L}}
\label{subsec:T}
The category $\StrCats{\omega}$ of small strict $\omega$-categories and strict $\omega$-functors is monadic over $\GSet$, and moreover the induced monad $T=(T,\eta^T,\mu^T)$ is \emph{cartesian}~\cite[Theorem~F.2.2]{Leinster_book}; that is, the functor $T$ preserves pullbacks, and the natural transformations $\eta^T$ and $\mu^T$ are \emph{cartesian}, which means that all their naturality squares are pullbacks.
After reviewing the action of this monad $T$ more explicitly, we will define the monad $L$ for weak $\omega$-categories in terms of how it relates to $T$.

Write $1$ for the terminal globular set. 
The $n$-cells in the globular set $T1$ may be identified with the \emph{pasting schemes} of dimension $n$ in the following sense (see \cite[Section 2.2]{FHM1} for a proof). 

\begin{definition}\label{def:pasting-scheme}
    A \emph{pasting scheme} is a table of non-negative integers
    \[
    \kk = \begin{bmatrix}
    k_0 & & k_1 & & \dots & & k_r\\
    & \underline{k}_1 & & \underline{k}_2 & \dots & \underline{k}_r &
    \end{bmatrix}
    \]
    with $r\ge 0$ and $k_{i-1} > \underline k_i < k_i$ for all $1 \le i \le r$.
    For $n\ge 0$, a \emph{pasting scheme of dimension $n$} is a pasting scheme $\kk$ as above which moreover satisfies $k_i \le n$ for all $0 \le i \le r$.
\end{definition}

Each pasting scheme $\kk$ may be visualised as the arrangement of $k_i$-cells for $0 \le i \le r$ where the $(i-1)$st and the $i$th cells are composed along their $\underline{k}_i$-dimensional boundary.
For example, the pasting scheme
\[
\kk = \begin{bmatrix}
    2 & & 1 & & 2 & & 2\\
    & 0 & & 0 & & 1 & 
\end{bmatrix}
\]
corresponds to the arrangement:
\[
\begin{tikzpicture}[baseline=-\the\dimexpr\fontdimen22\textfont2\relax ]
      \node(20) at (-1.5,0) {$\bullet$};
      \node(21) at (0,0) {$\bullet$};
      \node(22) at (1.5,0) {$\bullet$};
      \node(23) at (3,0) {$\bullet$};
      
      \draw [->]  (21) to (22);
      
      \draw [->,bend left=30]  (20) to node (2u) {} (21);      
      \draw [->,bend right=30] (20) to node (2b) {} (21); 
      \draw [2cell] (2u) to (2b);

      \draw [->,bend left=50]  (22) to node (3u) {} (23);
      \draw [->] (22) to node (3m) {} (23);
      \draw [->,bend right=50] (22) to node (3b) {} (23);
      \draw [2cell] (3u) to (3m);
      \draw [2cell] (3m) to (3b);
\end{tikzpicture}
\]

Observe that, given a pasting scheme $\kk$ of dimension $n$, the same table $\kk$ can also present a pasting scheme of dimension $n+1$ (or greater).
Whenever this ambiguity can cause confusion, we write $\kk^{(n)}$ to indicate that it is regarded as one of dimension $n$.

The unit map $\eta^T_1 \colon 1 \to T1$ sends the unique $n$-cell of $1$ to the pasting scheme
\[
\begin{bmatrix}
    n \\ {}
\end{bmatrix},
\]
which we often denote simply as $[n]$.

For an arbitrary globular set $X$, the $n$-cells in $TX$ may be identified with the \emph{pasting diagrams} of dimension $n$ in $X$, defined as follows.

\begin{definition}
\label{def:pasting-diagram}
    Let $X$ be a globular set and $\kk$ be a pasting scheme as in the previous definition.
    A \emph{pasting diagram of shape $\kk$ in $X$} is a table
    \[
    \uu = \begin{bmatrix}
    u_0 & & u_1 & & \dots & & u_r\\
    & \underline{u}_1 & & \underline{u}_2 & \dots & \underline{u}_r &
    \end{bmatrix}
    \]
    of cells $u_i \in X_{k_i}$ for $0 \le i \le r$ and $\underline u_i \in X_{\underline k_i}$ for $1 \le i \le r$, such that
    \[
    t_{\underline k_i}^X(u_{i-1}) =\underline u_i = s_{\underline k_i}^X(u_i)
    \]
    for all $1 \le i \le r$.
    The pasting diagram $\uu$ is \emph{of dimension $n$} if its shape $\kk$ is so.
\end{definition}
A pasting diagram $\uu$ of shape $\kk$ in $X$ may be understood as the pasting scheme $\kk$ labelled with the cells $u_i$ (and $\underline{u}_i$).
For example, the pasting diagram
\[
\uu=
\begin{bmatrix}
    \alpha & & h & & \beta & & \gamma\\
    & b & & c & & j & 
\end{bmatrix}
\]
(whose shape is the example of $\kk$ from above) may be visualised as
\begin{equation*}
	\begin{tikzpicture}[baseline=-\the\dimexpr\fontdimen22\textfont2\relax ]
		\node(20) at (0,0) {$a$};
		\node(21) at (2,0) {$b$};
		\node(22) at (4,0) {$c$};
		\node(23) at (6,0) {$d$.};
		
		\draw [->,bend left=30]  (20) to node[auto, labelsize] {$f$} (21);
		\draw [->,bend right=30] (20) to node[auto, swap,labelsize] {$g$} (21); 
		\draw [->]  (21) to node[auto, labelsize] {$h$} (22);
		\draw [->,bend left=60]  (22) to node[auto, labelsize] {$i$} (23);
		\draw [->]  (22) to node[midway,fill=white,labelsize] {$j$} (23);
		\draw [->,bend right=60]  (22) to node[auto, swap,labelsize] {$k$} (23);
		
		\draw [2cell]  (1,0.25) to node[auto,labelsize] {$\alpha$} (1,-0.25);
		\draw [2cell]  (5,0.58) to node[auto,labelsize] {$\beta$} (5,0.18);
		\draw [2cell]  (5,-0.18) to node[auto,labelsize] {$\gamma$} (5,-0.58);
	\end{tikzpicture}
\end{equation*}

Similarly to pasting schemes, we write $\uu^{(n)}$ when we wish to explicitly indicate that a pasting diagram $\uu$ is regarded as one of dimension $n$.

The unit map $\eta^T_X \colon X \to TX$ sends an $n$-cell $x$ of $X$ to the pasting diagram
\[
\begin{bmatrix}
    x \\ {}
\end{bmatrix},
\]
which we often denote simply as $[x]$.
For more details on $T$ (such as how the source and target operations are defined on $TX$), see \cite[Section 2.2]{FHM1}.

We will define the category $\WkCats{\omega}$ of \emph{weak $\omega$-categories} and \emph{strict $\omega$-functors} between them as the Eilenberg--Moore category of a monad $L$ on $\GSet$. 
This $L$ is the \emph{initial cartesian monad over $T$ with contraction} in the following sense.
(The intuition behind this definition will be explained in \cref{subsec:basic-operations}; see also \cite[Section 2.3]{FHM1}.)

Firstly, $L = (L,\eta^L, \mu^L)$ is itself a monad.
Being \emph{cartesian over $T$} means having a monad map $\ar \colon L \to T$ (called the \emph{arity}) which is cartesian as a natural transformation.\footnote{The existence of such $\ar$ implies that $L$ too is a cartesian monad.
The proof of this fact proceeds by considering the suitable squares involving $L$ that one wishes to show to be pullbacks, connecting them to the corresponding squares involving $T$ via $\ar$, and then using the pasting lemma for pullback squares and the fact that $T$ is a cartesian monad; cf.~\cite[Proof of Proposition~6.2.1]{Leinster_book}.}
It moreover has a \emph{contraction} $\kappa$, which is a function that assigns, to each commutative square as in the solid part below (with $n\geq 1$), a diagonal lift as indicated:
\begin{equation}
\label{eqn:contraction}
\begin{tikzpicture}[baseline=-\the\dimexpr\fontdimen22\textfont2\relax ]
      \node(00) at (0,1) {$\partial\OO^n$};
      \node(01) at (2,1) {$L1$};
      \node(10) at (0,-1) {$\OO^n$};
      \node(11) at (2,-1) {$T1$.};
      
      \draw [->] (00) to node[auto, labelsize] {$\ppair{\phi,\phi'}$} (01); 
      \draw [->] (01) to node[auto, labelsize] {$\ar_1$} (11); 
      \draw [->] (00) to node[auto,swap,labelsize] {$\iota_n$} (10); 
      \draw [->] (10) to node[auto,swap,labelsize] {$\kk$} (11);   
      \draw [->, dashed] (10) to node[midway,fill=white,labelsize] {$\kappa(\ppair{\phi,\phi'},\kk)$} (01);
\end{tikzpicture}
\end{equation}
Finally, $L$ is \emph{initial} in a suitable category of such monads, whose existence is guaranteed by \cite[Proposition~9.2.2]{Leinster_book}.

\begin{definition}[{\cite{Leinster_book}}]\label{def:weak-omega-cats}
    A \emph{weak $\omega$-category} is an Eilenberg--Moore algebra  $(X,\xi\colon LX\to X)$ of the monad $L=(L,\eta^L,\mu^L)$ on $\GSet$.
\end{definition}

We can immediately observe the following fact (mentioned in, e.g., \cite[Section~5]{Garner_homomorphisms}). 
\begin{proposition}
  \label{L-is-finitary}
    The functor $L \colon \GSet \to \GSet$ preserves filtered colimits.
\end{proposition}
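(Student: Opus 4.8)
The plan is to exploit the cartesianness of the arity $\ar\colon L\to T$ in order to reduce the assertion to the (standard) finitariness of $T$. First I would apply cartesianness to the unique globular map $!\colon X\to 1$: the naturality square
\[
\begin{tikzcd}
LX \arrow[r, "L!"] \arrow[d, "\ar_X"'] & L1 \arrow[d, "\ar_1"] \\
TX \arrow[r, "T!"'] & T1
\end{tikzcd}
\]
is then a pullback, which exhibits a natural isomorphism $LX\cong TX\times_{T1}L1$.

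Next I would record that $T$ preserves filtered colimits. This is standard: from the explicit description of $(TX)_n$ as the pasting diagrams of dimension $n$ in $X$, this set decomposes as a coproduct, indexed by the pasting schemes $\kk\in(T1)_n$, of the sets of pasting diagrams of shape $\kk$ in $X$, and each such set is a finite limit of copies of the sets $X_m$ (the limit being taken along the relevant source and target maps). Since coproducts are colimits and finite limits commute with filtered colimits in $\Set$, and since both limits and colimits in the presheaf category $\GSet$ are computed pointwise, it follows that $T$ preserves filtered colimits.

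Finally I would combine the two observations. Let $D\colon I\to\GSet$ be a filtered diagram with colimit $X=\operatorname{colim}_i D_i$. Using the natural isomorphism from the first step, the preservation of filtered colimits by $T$, and the fact that the fixed pullback $(-)\times_{T1}L1$ commutes with filtered colimits, I would compute
\[
\operatorname{colim}_i L D_i \cong \operatorname{colim}_i\bigl(TD_i\times_{T1}L1\bigr) \cong \bigl(\operatorname{colim}_i TD_i\bigr)\times_{T1}L1 \cong TX\times_{T1}L1 \cong LX,
\]
and then verify that this composite isomorphism is indeed the canonical comparison map $\operatorname{colim}_i LD_i\to L(\operatorname{colim}_i D_i)$.

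The only step carrying genuine content is the finitariness of $T$; the rest is formal, flowing from cartesianness of $\ar$ together with the interaction of filtered colimits with finite limits in a presheaf category. The one place to be slightly careful is the commutation of the fixed pullback $(-)\times_{T1}L1$ with the filtered colimit: here one uses both that filtered colimits commute with finite limits in $\Set$ and that a filtered index category is connected, so that the constant factors $L1$ and $T1$ are left unchanged by the colimit.
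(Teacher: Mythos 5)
Your proposal is correct and is essentially the paper's own argument: the paper factorises $L$ (up to natural isomorphism) as $X \mapsto (T!\colon TX\to T1)$, followed by pullback along $\ar_1$ and the projection $\GSet/L1\to\GSet$, which is exactly your natural isomorphism $LX\cong TX\times_{T1}L1$ from cartesianness of $\ar$ together with the stability of filtered colimits under pullback along a fixed map. The only difference is that where you sketch a direct proof that $T$ is finitary via the pasting-diagram description of $TX$ (your sketch is fine), the paper simply cites this fact (Leinster, Theorem~F.2.2, or Ara et al., Proposition~14.2.8).
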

\begin{proof}
    Up to natural isomorphism, we can factorise $L$ as
    \[
    \begin{tikzcd}
        \GSet
        \arrow [r] &
        \GSet/T1
        \arrow [r, "\mathrm{ar}_1^*"] &
        \GSet/L1
        \arrow [r] &
        \GSet
    \end{tikzcd}
    \]
    where the first factor sends $X \in \GSet$ to $T! \colon TX \to T1$ (which preserves filtered colimits because $T$ does by \cite[Theorem F.2.2]{Leinster_book} or \cite[Proposition 14.2.8]{Ara_etal_polygraphs}), the second factor pulls back along $\ar_1$ (which preserves filtered colimits because it is a left adjoint; note that $\GSet$, being a presheaf category, is locally cartesian closed), and the last factor is the canonical projection.
\end{proof}

\subsection{Basic operations in a weak \texorpdfstring{$\omega$}{ω}-category}
\label{subsec:basic-operations}
The two most basic operations in a categorical structure is taking the identity on a cell and composing a pair of cells.
The purpose of this subsection is to identify and analyse the weak $\omega$-categorical version of these operations.

We start by explaining the sense in which the globular set $L1$ parametrises operations in a weak $\omega$-category.
From now on, we will abuse the notation and write $\ar \colon L1 \to T1$ for the component $\ar_1$ of the natural transformation $\ar \colon L \to T$ at the terminal globular set $1$.

\begin{definition}
    By a \emph{pasting instruction} of dimension $n$, we mean an $n$-cell in $L1$.
\end{definition}

The intuition behind this terminology is that we may informally think of the fibre of $\ar \colon L1 \to T1$ over $\kk \in (T1)_n$ as the set of all possible ``instructions'' on exactly how to paste given cells arranged in shape $\kk$ in a weak $\omega$-category.
For example, there are (at least) two distinct $1$-cells in $L1$ of arity
\[
\begin{bmatrix}
    1 & & 1 & & 1\\
    & 0 & & 0 &
\end{bmatrix}
\]
corresponding to $(f,g,h) \mapsto (f \comp{0}{} g) \comp{0}{}h$ and $(f,g,h) \mapsto f \comp{0}{} (g \comp{0}{}h)$; the formal definition of the symbol $\comp{0}{}$ will be given in \cref{def:id-and-comp-in-weak-omega-cat} below.
From this viewpoint, the contraction $\kappa$ (\ref{eqn:contraction}) may be interpreted as encoding a sort of pasting theorem: given a pasting scheme $\kk$ and instructions $\ppair{\phi,\phi'}$ on how to paste its boundary, one can extend it to a pasting instruction $\kappa\bigl(\ppair{\phi,\phi'},\kk\bigr)$ on the whole of $\kk$.

Although there are many operations of given arity $\kk$ in general, it is convenient to fix one among such as our default choice.
In the case of $\kk = [n]$, there is an intuitively natural choice.

\begin{definition}
    For each $n\geq 0$, we will write $\widetilde{e}_n \in (L1)_n$ for the image of the unique $n$-cell in $1$ under the unit map $\eta^L_1 \colon 1 \to L1$.
\end{definition}

We extend this family of choices to arbitrary $\kk$ using the contraction $\kappa$.

\begin{definition}[{\cite[Definition~2.5.1]{FHM1}}]
    For any $n\geq 0$ and $\kk\in(T1)_n$, we define $\spi(\kk)\in(L1)_n$ with $\ar(\spi(\kk))=\kk$ inductively (on $n$) as follows. 
    \begin{itemize}
        \item If $\kk=[n]\in (T1)_n$, then $\spi([n])=\widetilde{e}_n$.
        \item Otherwise $\kk$ is of the form $\kk\colon \kkd\to \kkd$ for some $\kkd\in(T1)_{n-1}$. (Note that necessarily $n\geq 1$.) We define $\spi(\kk)=\kappa\bigl(\ppair{\spi(\kkd),\spi(\kkd)},\kk\bigr)$.
    \end{itemize}
    We call $\spi(\kk)$ the \emph{standard pasting instruction} of arity $\kk$.
\end{definition}
 
By construction, this assignment defines a globular map $\spi\colon T1\to L1$ which is moreover a section of $\ar \colon L1 \to T1$ and commutes with $\eta^T_1$ and $\eta^L_1$.

Next, we describe how to turn a pasting instruction into an actual operation in a weak $\omega$-category.
Observe that, since $\ar\colon L\to T$ is cartesian, the globular set $LX$ for any $X\in\GSet$ can be computed as the pullback
\begin{equation}
\label{eqn:LX}
\begin{tikzpicture}[baseline=-\the\dimexpr\fontdimen22\textfont2\relax ]
      \node(00) at (-0.5,1) {$LX$};
      \node(01) at (1.5,1) {$L1$};
      \node(10) at (-0.5,-1) {$TX$};
      \node(11) at (1.5,-1) {$T1$.};
      
      \draw [->] (00) to node[auto, labelsize] {$L!$} (01); 
      \draw [->] (01) to node[auto, labelsize] {$\ar$} (11); 
      \draw [->] (00) to node[auto,swap,labelsize] {$\ar_X$} (10); 
      \draw [->] (10) to node[auto,swap,labelsize] {$T!$} (11); 
\end{tikzpicture}
\end{equation}
This fact allows us to identify the $n$-cells in $LX$ with the pairs $(\phi,\uu)$ consisting of a pasting instruction $\phi \in (L1)_n$ and a pasting diagram $\uu\in (TX)_n$ such that $\uu$ is of shape $\ar(\phi)$.
Thus, given a weak $\omega$-category $(X,\xi)$ and a pasting instruction $\phi \in (L1)_n$, we can define the ``paste according to the instruction $\phi$'' operation on $X$ as $\uu \mapsto \xi(\phi,\uu)$.
Here the argument $\uu$ must be a pasting diagram of shape $\ar(\phi)$ in $X$; in other words, this operation has arity $\ar(\phi)$.

Note that the naturality of $\ar \colon L \to T$ implies that the functor $L$ sends $f\colon X\to Y$ to $Lf\colon LX\to LY$ given by $(Lf)(\phi,\uu) = \bigl(\phi, (Tf)(\uu)\bigr)$.
It follows that any strict $\omega$-functor $f\colon (X,\xi)\to (Y,\nu)$ between weak $\omega$-categories preserves the ``paste according to $\phi$'' operations because, for any pasting diagram $\uu$ in $X$ of shape $\ar(\phi)$, we have 
\[
        f\bigl(\xi(\phi,\uu)\bigr)
        = \nu\bigl((Lf)(\phi,\uu)\bigr)
        = \nu\bigl(\phi,(Tf)(\uu)\bigr).
\]

Finally, we define the identity and binary composition operations as the ``paste according to $\spi(\kk)$'' operations of suitable arities $\kk$.
More precisely, in \cref{def:id-and-comp-in-weak-omega-cat} below (which generalises \cite[Definition~2.5.2]{FHM1}), we define 
\begin{enumerate}
    \item an operation that returns an identity $n$-cell on a given $m$-cell, and
    \item an operation that takes an $m_1$-cell and an $m_2$-cell, composes them along a $k$-cell, and regards the result as an $n$-cell.
\end{enumerate}
(These cover precisely all $\kk$ as in \cref{def:pasting-scheme} with $r \le 1$.)

\begin{definition}
\label{def:id-and-comp-in-weak-omega-cat}
    Let $(X,\xi)$ be a weak $\omega$-category.
    \begin{enumerate}
        \item Given natural numbers $m,n$ with $m<n$ and an $m$-cell $x$ of $X$, we define the $n$-cell 
    \[
    \id{n}{X}{x}=
    \xi\bigl( \spi([m]^{(n)}),[x]^{(n)}\bigr)
    \]
    of $X$.
        \item Given natural numbers $k,m_1,m_2,n$ with $k<m_1\leq n$ and $k<m_2\leq n$, an $m_1$-cell $u$ of $X$, and an $m_2$-cell $v$ of $X$ such that $t_k^X(u)=s_k^X(v)$, we define the $n$-cell 
            \[
    u\comp{k,n}{X}v=
    \xi\biggl( \spi\biggl({\begin{bmatrix}
    m_1 & & m_2 \\
    & k &
    \end{bmatrix}^{(n)}}\biggr),    \begin{bmatrix}
    u & & v \\
    & x &
    \end{bmatrix}^{(n)}\biggr)
    \]
    of $X$, where $x=t_k^X(u)=s_k^X(v)$. When $n=\max\{m_1,m_2\}$, we also write $u\comp{k,n}{X}v$ as $u\comp{k}{X}v$.
    \end{enumerate}
    When $X$ is clear from the context, we omit the superscript.
\end{definition}

Observe that, since $\eta^L$ is natural (in particular with respect to the unique map $X \to 1$) and $\ar \colon L \to T$ is a monad morphism, the component of the unit $\eta^L_X\colon X\to LX$ at arbitrary $X \in \GSet$ maps each $n$-cell $x\in X_n$ to the $n$-cell $(\widetilde{e}_n,[x]) \in (LX)_n$.
It follows that we have
\begin{align*}
    \xi\bigl( \spi([n]^{(n)}),[x]\bigr)
    &=\xi\left( \widetilde{e}_n,[x]\right)\\
    &=\xi\circ\eta^L_X(x)\\
    &=x
\end{align*}
for each $n$-cell $x$ in a weak $\omega$-category $(X,\xi)$, which is why we did not include the case $m=n$ in (1) of the above definition (although the expression still makes perfect sense).

These operations satisfy the following source and target formulas, generalising \cite[Proposition~2.5.3]{FHM1}.
Note that, since the pullback square (\ref{eqn:LX}) is one in $\GSet$, the source and target operations on $LX$ are induced by those on $L1$ and $TX$.

\begin{proposition}
    \label{prop:s-and-t-for-id-and-ast}
    Let $(X,\xi)$ be a weak $\omega$-category.
    \begin{enumerate}
        \item Let $m,n$ be natural numbers with $m<n$ and $x$ an $m$-cell of $X$. Let $\ell$ be a natural number.
        \begin{enumerate}
            \item If $\ell< m$, then we have 
            \[
            s_\ell\bigl(\id{n}{}{x}\bigr)=s_\ell(x)\quad\text{and}\quad t_\ell\bigl(\id{n}{}{x}\bigr)=t_\ell(x).
            \]
            \item We have 
            \[
            s_m\bigl(\id{n}{}{x}\bigr)=x=t_m\bigl(\id{n}{}{x}\bigr).
            \]
            \item If $m< \ell< n$, then we have 
            \[
            s_\ell\bigl(\id{n}{}{x}\bigr)=\id{\ell}{}{x}=t_\ell\bigl(\id{n}{}{x}\bigr).
            \]
        \end{enumerate}
        \item Let $k,m_1,m_2,n$ be natural numbers with $k<m_1\leq n$ and $k<m_2\leq n$, $u$ an $m_1$-cell of $X$, and $v$ an $m_2$-cell of $X$ such that $t_k(u)=s_k(v)$ holds. Let $\ell$ be a natural number.  
        \begin{enumerate}
            \item If $\ell<k$, then we have 
            \[
            s_\ell(u\comp{k,n}{}v)=s_\ell(u)=s_\ell(v)\quad\text{and}\quad
            t_\ell(u\comp{k,n}{}v)=t_\ell(u)=t_\ell(v).
            \]
            \item We have 
            \[
            s_k(u\comp{k,n}{}v)=s_k(u)\quad\text{and}\quad
            t_k(u\comp{k,n}{}v)=t_k(v).
            \]
            \item If $k<\ell< \min\{m_1,m_2\}$, then we have 
            \[
            s_\ell(u\comp{k,n}{}v)=s_\ell(u)\comp{k,\ell}{}s_\ell(v)\quad\text{and}\quad
            t_\ell(u\comp{k,n}{}v)=t_\ell(u)\comp{k,\ell}{}t_\ell(v).
            \]
            \item If $m_1\leq \ell<m_2$, then we have 
            \[
            s_\ell(u\comp{k,n}{}v)=u\comp{k,\ell}{}s_\ell(v)\quad\text{and}\quad
            t_\ell(u\comp{k,n}{}v)=u\comp{k,\ell}{}t_\ell(v).
            \]
            \item If $m_2\leq \ell<m_1$, then we have 
            \[
            s_\ell(u\comp{k,n}{}v)=s_\ell(u)\comp{k,\ell}{}v\quad\text{and}\quad
            t_\ell(u\comp{k,n}{}v)=t_\ell(u)\comp{k,\ell}{}v.
            \]
            \item If $\max\{m_1,m_2\}\leq \ell<n$, then we have 
            \[
            s_\ell(u\comp{k,n}{}v)=u\comp{k,\ell}{}v=t_\ell(u\comp{k,n}{}v).
            \]
        \end{enumerate}
    \end{enumerate}
\end{proposition}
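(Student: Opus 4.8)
The plan is to notice that both operations introduced in \cref{def:id-and-comp-in-weak-omega-cat} are instances of a single standard pasting operation, namely of the form $\xi\bigl(\spi(\kk),\uu\bigr)$ for an appropriate pasting scheme $\kk\in(T1)_n$ and pasting diagram $\uu\in(TX)_n$ of shape $\kk$. The structure map $\xi\colon LX\to X$ is in particular a morphism of globular sets, and $\spi\colon T1\to L1$ is a globular map by its defining property, so both commute with all iterated source and target operations; moreover the source and target on $LX$ are computed componentwise through the defining pullback. Hence for every $l$ we have
\[
s_l^X\bigl(\xi(\spi(\kk),\uu)\bigr)=\xi\bigl(s_l^{L1}(\spi(\kk)),s_l^{TX}(\uu)\bigr)=\xi\bigl(\spi(s_l^{T1}(\kk)),s_l^{TX}(\uu)\bigr),
\]
and symmetrically for $t_l$. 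This reduces the whole proposition to computing the iterated sources and targets of the relevant schemes $\kk$ in $T1$ and diagrams $\uu$ in $TX$ and then recognising the outcome, the combinatorics being that of $T$ recalled in \cite[Section~2.2]{FHM1}. Two elementary facts will be used repeatedly in the recognition step: the identity $\xi(\widetilde{e}_l,[y])=y$ established just after \cref{def:id-and-comp-in-weak-omega-cat}, and the fact that a single-cell diagram $[y]\in TX$ is sourced/targeted to $[s_l^X(y)]$, $[t_l^X(y)]$ when $l$ is below $\dim y$ and remains $[y]$ when $l$ is at or above $\dim y$.

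For part~(1) we have $\kk=[m]^{(n)}$ and $\uu=[x]$. As $[m]$ consists of a single cell, its iterated source and target at level $l$ are $[l]$ for $l<m$, the cell $[m]$ for $l=m$, and $[m]^{(l)}$ for $m<l<n$, with $[x]$ behaving analogously. Feeding these through the displayed formula and collapsing $\spi$ to $\widetilde{e}_l$ wherever the shape reduces to a single cell yields (a)--(c) directly: cases (a) and (b) return the single cells $s_l^X(x)$ and $x$, while case (c) reproduces the definition of $\id{l}{}{x}$.

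For part~(2) we have $\kk=\begin{bmatrix} m_1 & & m_2 \\ & k & \end{bmatrix}^{(n)}$ and $\uu=\begin{bmatrix} u & & v \\ & w & \end{bmatrix}$ with $w=t_k^X(u)=s_k^X(v)$, and the six cases (a)--(f) correspond exactly to the position of $l$ relative to $k<\min\{m_1,m_2\}\le\max\{m_1,m_2\}\le n$. When $l<k$ both legs are sourced below the gluing dimension, so the shape collapses to $[l]$ and we obtain the single cell $s_l^X(u)=s_l^X(v)$ (case (a)); when $l=k$ the source of the diagram is the $k$-source of the left leg, giving $s_k^X(u)$, and dually $t_k^X(v)$ (case (b)). In the remaining ranges the organising observation is that the shape stays a two-cell pasting scheme $\begin{bmatrix} m_1' & & m_2' \\ & k & \end{bmatrix}^{(l)}$ with $m_i'=\min\{m_i,l\}$, so that the resulting cell is precisely $s_l(u)\comp{k,l}{}s_l(v)$ (case (c)), $u\comp{k,l}{}s_l(v)$ (case (d)), $s_l(u)\comp{k,l}{}v$ (case (e)), or $u\comp{k,l}{}v$ (case (f), where source and target coincide because $l$ exceeds both $m_1$ and $m_2$); in each instance this is exactly the defining expression of the stated composite.

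I expect the only genuine work to be bookkeeping: tracking, across the ranges of $l$, which constituent cells get sourced and which shape results, and then matching each outcome against the correct expression in \cref{def:id-and-comp-in-weak-omega-cat}. There is no conceptual obstacle, since the two structural inputs---globularity of $\xi$ and $\spi$, and the componentwise computation of $s_l,t_l$ on $LX$ via the pullback---reduce everything to the already-understood source and target combinatorics of $T$.
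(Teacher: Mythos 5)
Your proposal is correct and takes essentially the same approach as the paper's proof: both reduce everything, via the globularity of $\xi$ and of $\spi$ (equivalently, the fact that standard pasting instructions form a globular subset of $L1$) together with the componentwise computation of sources and targets on $LX$ through the pullback, to the source/target combinatorics of $T$ recalled in \cite[Section~2.2]{FHM1}, and then match each outcome against \cref{def:id-and-comp-in-weak-omega-cat}. The only difference is one of presentation: the paper writes out a single representative case (the first equation of (2)(d)) and declares the rest straightforward, whereas you sketch the bookkeeping for all cases.
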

\begin{proof}
These are all straightforward consequences of the description of the source and target operations of $TX$ in \cite[Section~2.2]{FHM1} and the fact that the standard pasting instructions form a globular subset of $L1$ (so that we have $s^{L1}_\ell\bigl(\spi(\kk)\bigr)= \spi\bigl(s^{T1}_\ell(\kk)\bigr)$ and $t^{L1}_\ell\bigl(\spi(\kk)\bigr)= \spi\bigl(t^{T1}_\ell(\kk)\bigr)$ for any $\kk\in (T1)_n$ and $0\leq \ell< n$). For example, the first equation in (d) of (2) can be proved as follows, writing the $k$-cell $t_k^X(u)=s_k^X(v)$ as $x$:
\begin{align*}
s_\ell^X(u\comp{k,n}{X}v) &= s_\ell^X\biggl(
    \xi\biggl( \spi\biggl({\begin{bmatrix}
    m_1 & & m_2 \\
    & k &
    \end{bmatrix}^{(n)}}\biggr),    \begin{bmatrix}
    u & & v \\
    & x &
    \end{bmatrix}^{(n)}\biggr)\biggr)\\
    &= 
    \xi\biggl( s_\ell^{L1}\biggl(\spi\biggl({\begin{bmatrix}
    m_1 & & m_2 \\
    & k &
    \end{bmatrix}^{(n)}}\biggr)\biggr), s^{TX}_\ell\biggl(   \begin{bmatrix}
    u & & v \\
    & x &
    \end{bmatrix}^{(n)}\biggr)\biggr)\\
    &= 
    \xi\biggl( \spi\biggl({\begin{bmatrix}
    m_1 & & \ell \\
    & k &
    \end{bmatrix}^{(\ell)}}\biggr),    \begin{bmatrix}
    u & & s_\ell^X(v) \\
    & x &
    \end{bmatrix}^{(\ell)}\biggr)\\
    & = u\comp{k,\ell}{X}s_\ell^X(v).\qedhere
\end{align*}
\end{proof}

In later sections, whenever we use the notation $\id{n}{X}{x}$ for an $m$-cell $x$ of $X$, we have $m=n-1$, and whenever we use the notation $u\comp{k,n}{X}v$ for an $m_1$-cell $u$ and an $m_2$-cell $v$ of $X$, we have $n=\max\{m_1,m_2\}$ (so it may be written as $u\comp{k}{X}v$) and $k=\min\{m_1,m_2\}-1$.

The pasting instructions for the identity and binary composition operations are themselves given by the identity and binary composition in the free weak $\omega$-category $L1=(L1,\mu^L_1)$ in the following sense.
\begin{proposition}
\label{prop:pastings-in-L1}
    For any pasting instruction $\phi \in (L1)_n$ of arity
    \[
    \kk = \begin{bmatrix}
    k_0 & & k_1 & & \dots & & k_r\\
    & \underline{k}_1 & & \underline{k}_2 & \dots & \underline{k}_r &
    \end{bmatrix},
    \]
    we have $\phi = \mu^L_1(\phi,\widetilde{e}_{\kk})$ where
    \[
    \widetilde{e}_{\kk} = 
    \begin{bmatrix}
    \widetilde{e}_{k_0} & & \widetilde{e}_{k_1} & & \dots & & \widetilde{e}_{k_r}\\
    & \widetilde{e}_{\underline{k}_1} & & \widetilde{e}_{\underline{k}_2} & \dots & \widetilde{e}_{\underline{k}_r} &
    \end{bmatrix}.
    \]
    Consequently, given a weak $\omega$-category $(X,\xi)$,
    \begin{enumerate}
        \item we have $\id{n}{X}{x} = \xi\bigl(\id{n}{L1}{\widetilde{e}_m},[x]^{(n)}\bigr)$ for all natural numbers $m < n$ and all $x \in X_m$, and
        \item we have
        \[
        u \comp{k,n}{X} v = \xi\left(\widetilde{e}_{m_1} \comp{k,n}{L1} \widetilde{e}_{m_2}, \begin{bmatrix}
        u & & v \\
        & x &
        \end{bmatrix}^{(n)}\right)
        \]
        for all natural numbers $k,m_1,m_2,n$ with $k<m_1\leq n$ and $k<m_2\leq n$, and all $u \in X_{m_1}$ and $v \in X_{m_2}$ with $t^X_k(u)=s^X_k(v) = x$.
    \end{enumerate}
\end{proposition}
\begin{proof}
    Let us first make the following observations.
    \begin{itemize}
        \item [(i)] By calling the unique $n$-cell in the terminal globular set $1$ by the name ``$n$,'' we may regard each pasting scheme $\kk$ as a pasting diagram $\uu$ in the globular set $1$, and this is unambiguous in the sense that both $\kk$ and $\uu$ specify the same $n$-cell in $T1$.
        \item [(ii)] When we apply the description of the cells in $LX$ as pairs $(\phi,\uu)$ to the case $X = 1$, we end up identifying each pasting instruction $\phi$ of arity $\kk$ with the pair $(\phi,\kk)$.
    \end{itemize}
    It follows that, for any pasting instruction $\phi \in (L1)_n$ of arity $\kk$ as in the statement, we have
    \begin{align*}
        \phi &= (\mu^L_1 \circ L\eta^L_1)(\phi) 
        \tag*{\text{($\mu^L\circ L\eta^L=1_L$)}}\\
        &= (\mu^L_1 \circ L\eta^L_1)(\phi, \kk) 
        \tag*{\text{(ii)}}\\
        &= \mu^L_1 \bigl(\phi, (T\eta_1^L)(\kk)\bigr). 
        \tag*{\text{(action of $Lf$ in the case $f=\eta_1^L$)}}
    \end{align*}
    Using (i), we can compute $(T\eta_1^L)(\kk)$ as
    \[
        (T\eta_1^L)\left(\begin{bmatrix}
        k_0 & & k_1 & & \dots & & k_r\\
        & \underline{k}_1 & & \underline{k}_2 & \dots & \underline{k}_r &
        \end{bmatrix}\right)
        = \begin{bmatrix}
        \eta^L_1(k_0) & & \eta^L_1(k_1) & & \dots & & \eta^L_1(k_r)\\
        & \eta^L_1(\underline{k}_1) & & \eta^L_1(\underline{k}_2) & \dots & \eta^L_1(\underline{k}_r) &
        \end{bmatrix}\\
        = \widetilde{e}_{\kk}.
     \]
     This proves the first assertion.
     Moreover, the case $\phi = \spi\bigl([m]^{(n)}\bigr)$ yields
     \[
     \spi\bigl([m]^{(n)}\bigr) = \mu_1^L\Bigl(\spi\bigl([m]^{(n)}\bigr), [\widetilde{e}_m]^{(n)}\Bigr) = \id{n}{L1}{\widetilde{e}_m}
     \]
    where the second equality is simply (1) of \cref{def:id-and-comp-in-weak-omega-cat} with $(X,\xi) = (L1,\mu^L_1)$.
    This completes the proof of the assertion (1), and similarly (2) follows from the case $\phi = \spi\biggl(\begin{bmatrix}
         m_1 & & m_2 \\
         & k & 
     \end{bmatrix}^{(n)}\biggr)$.
\end{proof}

\begin{remark}
\label{rmk:id-in-TX}
    For each globular set $X$, we obtain the free strict $\omega$-category $(TX,\mu^T_X)$, which then induces the weak $\omega$-category $(TX,\mu^T_X\circ \ar_{TX})$. 
    In this weak $\omega$-category $TX$, for each $m<n$ and $\uu\in (TX)_m$, the identity $n$-cell $\id{n}{TX}{\uu}\in (TX)_n$ is the $m$-dimensional pasting diagram $\uu$ in $X$ regarded as an $n$-dimensional one, i.e., $\id{n}{TX}{\uu}=\uu^{(n)}$.
    (In particular, for any $m<n$ and $\kk\in (T1)_m$, we have $\id{n}{T1}{\kk}=\kk^{(n)}$.) 
    Using this notation, (1) of \cref{prop:pastings-in-L1} can be expressed as $\id{n}{X}{x}=\xi\bigl(\id{n}{L1}{\widetilde e_m},\id{n}{TX}{[x]}\bigr)$. Here is an alternative proof of this:
    \begin{align*}
        \xi\bigl(\id{n}{L1}{\widetilde e_m},\id{n}{TX}{[x]}\bigr)
        &= \xi\bigl(\id{n}{LX}{\widetilde e_m,[x]}\bigr)
        \tag*{\text{($LX$ is a pullback of $L1$ and $TX$ in $\WkCats{\omega}$)}}\\
        &= \newid^X_n\bigl(\xi(\widetilde e_m,[x])\bigr)
        \tag*{\text{($\xi\colon (LX,\mu^L_X)\to (X,\xi)$ is a strict $\omega$-functor)}}\\
        &= \id{n}{X}{x}.
        \tag*{\text{($\xi\circ \eta^L_X=1_X$)}}
    \end{align*}
    
    Similarly, in the situation of (2) of \cref{prop:pastings-in-L1}, we have 
    \[
    [u]\comp{k,n}{TX}[v]= 
    \begin{bmatrix}
        u & & v \\
        & x &
        \end{bmatrix}^{(n)}
    \]
    (cf.~\cite[Section~2.2]{FHM1}), 
    which allows us to express the main claim there as 
    $u \comp{k,n}{X} v = \xi\bigl(\widetilde{e}_{m_1} \comp{k,n}{L1} \widetilde{e}_{m_2}, [u]\comp{k,n}{TX}[v]\bigr)$. This can be proved similarly:
    \begin{align*}
        \xi\bigl(\widetilde{e}_{m_1} \comp{k,n}{L1} \widetilde{e}_{m_2}, [u]\comp{k,n}{TX}[v]\bigr)
        &= \xi\bigl((\widetilde{e}_{m_1},[u]) \comp{k,n}{LX} (\widetilde{e}_{m_2},[v])\bigr)\\
        &= \xi\bigl(\widetilde{e}_{m_1},[u]\bigr) \comp{k,n}{X} \xi\bigl(\widetilde{e}_{m_2},[v]\bigr)\\
        &= u\comp{k,n}{X}v.\qedhere
    \end{align*}
\end{remark}

For later reference, we remark on the compatibility of identity and binary composition operations and the construction of the \emph{hom weak $\omega$-category} $X(x,y)$ of a weak $\omega$-category $X$ between objects $x,y\in X_0$ defined in \cite[Section~9.3]{Leinster_book} and \cite{Cottrell_Fujii_hom}, and recalled in \cite[Section~2.5]{FHM1}. Just as \cite[Corollary~2.5.6]{FHM1}, the following is an immediate consequence of \cite[Proposition~2.5.5]{FHM1}.

\begin{corollary}
\label{cor:composition-in-hom}
    Let $X$ be a weak $\omega$-category and $x,y\in X_0$. 
    \begin{enumerate}
        \item Given natural numbers $m,n$ with $m< n$ and an $m$-cell $z$ of $X(x,y)$, we have
        \[
        \id{n}{X(x,y)}{z}=\id{n+1}{X}{z}.
        \]
        \item  Given natural numbers $k,m_1,m_2,n$ with $k<m_1\leq n$ and $k<m_2\leq n$, an $m_1$-cell $u$ of $X(x,y)$, and an $m_2$-cell $v$ of $X(x,y)$ such that $t_k^{X(x,y)}(u)=s_k^{X(x,y)}(v)$, we have
        \[
        u\comp{k,n}{X(x,y)}v=u\comp{k+1,n+1}{X}v.
        \]
    \end{enumerate}
\end{corollary}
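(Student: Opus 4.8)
The plan is to obtain both equalities from \cite[Proposition~2.5.5]{FHM1}, exactly as \cite[Corollary~2.5.6]{FHM1} is obtained from it; the only difference is that we apply it to the dimension-parametrised operations $\id{n}{}{}$ and $\comp{k,n}{}$ of \cref{def:id-and-comp-in-weak-omega-cat} rather than to their $n=\max$ special cases. First I would unfold the left-hand sides by \cref{def:id-and-comp-in-weak-omega-cat} applied to the weak $\omega$-category $X(x,y)$: writing $\xi'$ for its algebra structure, we have
\[
\id{n}{X(x,y)}{z}=\xi'\bigl(\spi([m]^{(n)}),[z]\bigr)
\]
and
\[
u\comp{k,n}{X(x,y)}v=\xi'\biggl(\spi\biggl(\begin{bmatrix} m_1 & & m_2 \\ & k & \end{bmatrix}^{(n)}\biggr),\begin{bmatrix} u & & v \\ & t_k(u)=s_k(v) & \end{bmatrix}\biggr).
\]

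The key step is to feed these into \cite[Proposition~2.5.5]{FHM1}, which expresses any standard pasting operation computed in the hom weak $\omega$-category $X(x,y)$ as the standard pasting operation in $X$ of the pasting scheme obtained by raising every entry of the shape (and the ambient dimension) by one, applied to the pasting diagram in $X$ that reinterprets each $p$-cell of $X(x,y)$ as a $(p+1)$-cell of $X$. Carrying out this shift is purely combinatorial: the shape $[m]^{(n)}$ becomes $[m+1]^{(n+1)}$, and the shape $\begin{bmatrix} m_1 & & m_2 \\ & k & \end{bmatrix}^{(n)}$ becomes $\begin{bmatrix} m_1+1 & & m_2+1 \\ & k+1 & \end{bmatrix}^{(n+1)}$, which are exactly the shapes defining $\id{n+1}{X}{z}$ and $u\comp{k+1,n+1}{X}v$. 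Simultaneously the pasting diagram $[z]$ is sent to $[z]$, and the two-cell diagram for $(u,v)$ is sent to the corresponding two-cell diagram in $X$, under which the hypothesis $t_k^{X(x,y)}(u)=s_k^{X(x,y)}(v)$ reads as $t_{k+1}^{X}(u)=s_{k+1}^{X}(v)$. Substituting these identifications back into the two displays yields precisely the asserted equalities.

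I do not expect a genuine obstacle: all the substance lies in \cite[Proposition~2.5.5]{FHM1}, and what remains is to track the dimension-shift bijection between the cells of $X(x,y)$ and the appropriate cells of $X$ together with its effect on the two specific pasting schemes, both of which are immediate. The only point deserving a moment's care is to confirm that incrementing each entry preserves the inequality constraints ($m<n$; and $k<m_1\le n$, $k<m_2\le n$) so that the shifted tables are again legitimate pasting schemes of the stated dimensions — but this is clear, since adding one to both sides preserves every strict and non-strict inequality.
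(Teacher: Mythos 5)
Your proposal is correct and follows exactly the route the paper takes: the paper likewise derives both equalities as an ``immediate consequence'' of \cite[Proposition~2.5.5]{FHM1}, in parallel with how \cite[Corollary~2.5.6]{FHM1} is obtained there, and your unfolding of \cref{def:id-and-comp-in-weak-omega-cat} plus the dimension-shift bookkeeping is just the spelled-out version of that one-line argument.
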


\subsection{Invertible cells}
In this subsection, we recall the main results from our previous work \cite{FHM1} concerning \emph{invertible} cells in the following sense.

\begin{definition}[{\cite{Cheng_dual,Lafont_Metayer_Worytkiewicz_folk_model_str_omega_cat}}]
\label{def:invertible}
    An $n$-cell $u \colon x \to y$ (with $n \ge 1$) in a weak $\omega$-category $X$ is \emph{invertible} if there exist
    \begin{itemize}
        \item an $n$-cell $\check u \colon  y \to x$,
        \item an invertible $(n+1)$-cell $p\colon u\comp{n-1}{}\check u\to \id{n}{}{x}$, and
        \item an invertible $(n+1)$-cell $q\colon \check u\comp{n-1}{}u\to \id{n}{}{y}$
    \end{itemize}
    in $X$.
    In this situation, we say that $\check u$ is an \emph{inverse} of $u$.
    For $n$-cells $x$ and $y$ (with $n\geq 0$), we write $x \sim y$ if there exists an invertible $(n+1)$-cell $u\colon x \to y$.
\end{definition}

\begin{remark}
    In \cite{FHM1}, invertible cells are called \emph{weakly invertible} cells and inverses \emph{pseudo inverses}. Since in this paper we do not consider strictly invertible cells, we adopt the above more concise terminology.
\end{remark}

\begin{remark}
\label{rmk:coinduction}
The above notion of invertible cell in a weak $\omega$-category $X$ is defined \emph{coinductively}. 
Here we explain what this means in more detail and derive a useful proof principle, following \cite[Remark 3.1.2]{FHM1}.

We start with a general observation. 
Let $L$ be a complete lattice and $\Psi\colon L\to L$ a monotone map. 
We say that an element $s\in L$ is a \emph{post-fixed point} of $\Psi$ if $s\leq \Psi(s)$ holds. The set $\mathrm{Post}(\Psi)=\{\,s\in L\mid s\leq \Psi (s)\,\}$ of all post-fixed points of $\Psi$ is closed under joins in $L$, and hence is also a complete lattice. In particular, $\mathrm{Post}(\Psi)$ has a greatest element $t$. 
(In fact, $t$ is a \emph{fixed point} of $\Psi$, i.e., $t=\Psi(t)$ holds, since we have $\Psi(t)\in\mathrm{Post}(\Psi)$.) 
Notice that to show $s\leq t$ for some $s\in L$, it suffices to show that $s$ is a post-fixed point of $\Psi$.
The element $t$ is called the greatest (post-)fixed point of $\Psi$, and is denoted by $\nu\Psi$.

Now let $X$ be a weak $\omega$-category. \cref{def:invertible} can be understood by means of the monotone map $\Phi^X\colon \mathcal{P}\bigl(\coprod_{n\in\mathbb{N}} X_n\bigr)\to \mathcal{P}\bigl(\coprod_{n\in\mathbb{N}} X_n\bigr)$ on the powerset lattice $\mathcal{P}\bigl(\coprod_{n\in\mathbb{N}} X_n\bigr)$ of the set of all cells of $X$, defined by 
\begin{multline*}
\Phi^X(S)=\bigl\{\,(u\colon x\to y)\in X_n\,\big\vert\, n\geq 1,\ \ \exists (\check u\colon y\to x)\in X_n,\\
\exists \bigl({p}\colon u\comp{n-1}{}\check u\to \id{n}{}{x}\bigr)\in S\cap X_{n+1},\ \ \exists \bigl({q}\colon \check u\comp{n-1}{}u\to \id{n}{}{y}\bigr)\in S\cap X_{n+1}\,\bigr\}
\end{multline*}
for each $S\subseteq\coprod_{n\in\mathbb{N}} X_n$. 
In words, $\Phi^X(S)$ is the set of all cells in $X$ which are ``invertible up to $S$.''
We define the set of all invertible cells of $X$ to be the greatest (post-)fixed point $\nu\Phi^X$ of $\Phi^X$. 
It follows that if a set $W \subseteq \coprod_{n \in \N} X_n$ 
satisfies $W\subseteq \Phi^X(W)$, then all cells in $W$ are invertible. This is how we typically show invertibility of cells in a weak $\omega$-category.
\end{remark}

\begin{remark}[{\cite[Remark~3.1.3]{FHM1}}]
\label{rmk:hom-weak-omega-cat}
    Let $X$ be a weak $\omega$-category.
    By \cref{cor:composition-in-hom}, for $n\geq 2$, an $n$-cell $u$ of $X$ is invertible if and only if it is invertible as an $(n-1)$-cell in the hom weak $\omega$-category $X\bigl(s_0^X(u),t_0^X(u)\bigr)$.
\end{remark}

In \cite{FHM1}, we proved the following facts about these invertible cells.

\begin{proposition}[{\cite[Proposition 3.2.1]{FHM1}}]
\label{prop:strict-omega-functor-preserves-invertible-cells}
Any strict $\omega$-functor between weak $\omega$-categories preserves invertible cells.
\end{proposition}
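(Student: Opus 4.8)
The plan is to use the coinductive characterisation of invertibility recorded in \cref{rmk:coinduction}, rather than any form of induction on dimension. Let $f\colon(X,\xi)\to(Y,\nu)$ be a strict $\omega$-functor, and set
\[
W=\bigl\{\,fu \;\big\vert\; u \text{ is an invertible cell of } X\,\bigr\}\subseteq\coprod_{n\in\N}Y_n.
\]
By \cref{rmk:coinduction}, to prove that every cell of $W$ is invertible in $Y$ it suffices to check the single containment $W\subseteq\Phi^Y(W)$; that is, that each $fu$ with $u$ invertible in $X$ is ``invertible up to $W$.'' So the entire proof reduces to exhibiting, for an arbitrary invertible cell, the witnessing data for its image and observing that this data again lies in $W$.

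Concretely, I would fix an invertible $n$-cell $u\colon x\to y$ in $X$ (with $n\geq1$) together with its witnesses: an inverse $\check u\colon y\to x$ and invertible $(n+1)$-cells $p\colon u\comp{n-1}{}\check u\to\id{n}{}{x}$ and $q\colon\check u\comp{n-1}{}u\to\id{n}{}{y}$. Since $f$ commutes with the source and target operations, $fu\colon fx\to fy$ and $f\check u\colon fy\to fx$. Because $f$ preserves the standard pasting operations (as recorded just before \cref{def:id-and-comp-in-weak-omega-cat}), it preserves both the binary compositions $\comp{n-1}{}$ and the identities $\id{n}{}{-}$, giving
\[
f\bigl(u\comp{n-1}{}\check u\bigr)=fu\comp{n-1}{}f\check u,
\qquad
f\bigl(\id{n}{}{x}\bigr)=\id{n}{}{fx},
\]
and symmetrically with $x,y$ and $u,\check u$ interchanged. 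Applying $f$ to $p$ and $q$ therefore yields cells $fp\colon fu\comp{n-1}{}f\check u\to\id{n}{}{fx}$ and $fq\colon f\check u\comp{n-1}{}fu\to\id{n}{}{fy}$ in $Y$. As $p$ and $q$ are invertible in $X$, the cells $fp$ and $fq$ lie in $W$ by definition, so $f\check u$ is an inverse candidate for $fu$ equipped with the two required $(n+1)$-cells in $W\cap Y_{n+1}$. This is exactly the statement $fu\in\Phi^Y(W)$, and since $fu$ was arbitrary we obtain $W\subseteq\Phi^Y(W)$ as desired.

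The point I expect to be the only conceptual obstacle is precisely the reason one must phrase the argument coinductively: invertibility requires the witnesses $p,q$ to be invertible, whose own witnesses are invertible $(n+2)$-cells, and so on with no base case, so a naive induction on dimension cannot get started. The principle of \cref{rmk:coinduction} sidesteps this by letting us close $W$ under $f$-images of \emph{all} invertible cells simultaneously: the data witnessing invertibility of $u$ then maps directly to data witnessing invertibility of $fu$ \emph{relative to $W$}, and no independent verification that $fp,fq$ are invertible is required. Everything else is a routine appeal to preservation of the standard pasting operations by strict $\omega$-functors.
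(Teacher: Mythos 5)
Your proof is correct and is essentially the argument the paper relies on: the statement is cited from \cite[Proposition~3.2.1]{FHM1}, where it is proved by exactly this coinductive scheme, taking $W$ to be the set of $f$-images of invertible cells, using preservation of the standard pasting operations (identities and $\comp{n-1}{}$) to type $fp$ and $fq$, and invoking the principle recorded in \cref{rmk:coinduction} to avoid any induction on dimension. Nothing to add.
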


\begin{proposition}[{Coherence, \cite[Proposition 3.2.5]{FHM1}}]
\label{cor:coherence}
Let $n\geq 0$ and $\phi,\phi'\in (L1)_n$ be parallel $n$-cells with $\ar(\phi)=\ar(\phi')=\kk\in (T1)_n$.
Then for any weak $\omega$-category $(X,\xi)$ and any pasting diagram $\uu$ of shape $\kk$ in $X$, the $(n+1)$-cell 
\[
\xi\Bigl(\kappa\bigl(\ppair{\phi,\phi'},\id{n+1}{T1}{\kk}\bigr),\id{n+1}{TX}{\uu}\Bigr)\colon \xi(\phi,\uu)\to\xi(\phi',\uu)
\]
in $X$ is invertible. 
In particular, we have $\xi(\phi,\uu)\sim\xi(\phi',\uu)$ in $X$. 
\end{proposition}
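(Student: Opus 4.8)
The plan is to prove invertibility \emph{coinductively}, using the criterion recorded in \cref{rmk:coinduction}. Abbreviate the $(n+1)$-cell in the statement as $c(\phi,\phi',\uu)=\xi\bigl(\kappa(\ppair{\phi,\phi'},\kk^{(n+1)}),\uu\bigr)$, where $(\phi,\phi')$ is a parallel pair in $(L1)_n$ with common arity $\kk=\ar(\phi)=\ar(\phi')$ and $\uu$ a pasting diagram of shape $\kk$ in $X$. Let $W\subseteq\coprod_m X_m$ be the set of all such cells $c(\phi,\phi',\uu)$, taken over every dimension and all admissible data. By \cref{rmk:coinduction} it then suffices to show $W\subseteq\Phi^X(W)$, i.e.\ that every $c=c(\phi,\phi',\uu)\in W$ is invertible \emph{up to $W$}; the cell in the statement is one member of $W$, so this yields its invertibility (and the concluding ``$\sim$'').

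For a given $c\in W$ I would take as inverse the evidently parallel, oppositely oriented coherence cell $\check c=c(\phi',\phi,\uu)\in W$, and then produce the required witnesses $p\colon c\comp{n}{}\check c\to\id{n+1}{}{\xi(\phi,\uu)}$ and $q\colon\check c\comp{n}{}c\to\id{n+1}{}{\xi(\phi',\uu)}$, checking that they lie in $W$. The key observation is that both $c\comp{n}{}\check c$ and $\id{n+1}{}{\xi(\phi,\uu)}$ are values of $\xi$ at the \emph{same} diagram $\uu$: using the Eilenberg--Moore associativity $\xi\circ L\xi=\xi\circ\mu^L_X$ one rewrites
\[
c\comp{n}{}\check c=\xi(\Theta,\uu),\qquad \id{n+1}{}{\xi(\phi,\uu)}=\xi(\Theta',\uu),
\]
where $\Theta=\kappa(\ppair{\phi,\phi'},\kk^{(n+1)})\comp{n}{L1}\kappa(\ppair{\phi',\phi},\kk^{(n+1)})$ and $\Theta'=\id{n+1}{L1}{\phi}$ live in the free weak $\omega$-category $L1$. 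The diagram stays $\uu$ because $\uu$ is $n$-dimensional, hence degenerate as an $(n+1)$-cell of $TX$, so gluing the two copies of $\uu$ along their common (degenerate) $n$-boundary in the strict $\omega$-category $TX$ returns $\uu$. A short arity computation in the strict $\omega$-category $T1$, where $\kk^{(n+1)}$ is the identity on $\kk^{(n)}$, gives $\ar(\Theta)=\ar(\Theta')=\kk^{(n+1)}$, so $\Theta,\Theta'$ are parallel with common arity. I may therefore apply the contraction one dimension up and set
\[
p:=\xi\bigl(\kappa(\ppair{\Theta,\Theta'},\kk^{(n+2)}),\uu\bigr)=c(\Theta,\Theta',\uu),
\]
which is by construction an $(n+2)$-cell $c\comp{n}{}\check c\to\id{n+1}{}{\xi(\phi,\uu)}$ lying in $W$; the cell $q$ is obtained symmetrically. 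This establishes $W\subseteq\Phi^X(W)$, so every cell of $W$ is invertible.

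The hard part will be the bookkeeping behind the two rewritings $c\comp{n}{}\check c=\xi(\Theta,\uu)$ and $\id{n+1}{}{\xi(\phi,\uu)}=\xi(\Theta',\uu)$: one must unfold the standard pasting operations defining $\comp{n}{}$ and $\id{n+1}{}{-}$, invoke the algebra axiom to collapse the nested applications of $\xi$ into a single one, and verify that the resulting outer instruction in $L1$ is exactly $\Theta$ (respectively $\Theta'$) while the combined diagram in $TX$ collapses back to $\uu$. Once this ``pasting of coherence cells stays within $L1$'' principle is in place, the contraction $\kappa$ does the rest of the work, feeding the construction up one dimension at a time, and the coinductive closure of \cref{rmk:coinduction} finishes the argument with no further choices to make.
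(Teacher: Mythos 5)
Your proof is correct, and since the paper itself does not prove this proposition but imports it from \cite{FHM1} (as the bracketed citation in the statement indicates), the relevant comparison is with that cited argument, which proceeds by exactly this kind of coinduction via \cref{rmk:coinduction}, taking the reversed contraction cell as the inverse and feeding the contraction up one dimension for the witnesses. Your bookkeeping does check out: the algebra axiom $\xi\circ L\xi=\xi\circ\mu^L_X$ together with cartesianness of $\ar$ reduces both rewritings to a computation of $\mu^L_1$ (yielding $\Theta$ and $\Theta'$) and of $\mu^T_X$ on degenerate diagrams (which returns $\uu$, since $\uu^{(n+1)}$ is an identity cell of the strict $\omega$-category $TX$); moreover $\ar$, being a strict $\omega$-functor $(L1,\mu^L_1)\to(T1,\mu^T_1\circ\ar_{T1})$, sends both $\Theta$ and $\Theta'$ to $\kk^{(n+1)}$, and $\Theta,\Theta'\colon\phi\to\phi$ are parallel, so the cell $\xi\bigl(\kappa(\ppair{\Theta,\Theta'},\kk^{(n+2)}),\uu\bigr)$ is well defined, has the required type $c\comp{n}{}\check c\to\id{n+1}{}{\xi(\phi,\uu)}$, and lies again in your class $W$, closing the coinduction.
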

See \cref{rmk:id-in-TX} for a description of the cells $\id{n+1}{T1}{\kk}$ and $\id{n+1}{TX}{\uu}$ in the above proposition.

\begin{proposition}[{Unit Law, \cite[Proposition 3.3.5]{FHM1}}]
    \label{lem:unit-law}
    Let $X$ be a weak $\omega$-category, and let $u \colon x \to y$ be an $n$-cell in $X$ with $n \ge 1$.
    Then we have
    \[
    \id{n}{X}{x} \comp{n-1}{X} u \sim u \sim u \comp{n-1}{X} \id{n}{X}{y}.
    \]
\end{proposition}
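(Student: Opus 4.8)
The plan is to deduce both relations directly from Coherence (\cref{cor:coherence}). The idea is to exhibit $\id{n}{X}{x}\comp{n-1}{X}u$ and $u$ as $\xi(\phi,[u])$ and $\xi(\widetilde{e}_n,[u])$, where $[u]$ is the single pasting diagram of shape $[n]$ in $X$ and $\phi,\widetilde{e}_n\in(L1)_n$ form a parallel pair of pasting instructions of the same arity; \cref{cor:coherence} then immediately produces an invertible $(n+1)$-cell between the two. Since $u=\xi(\widetilde{e}_n,[u])$ already holds by the unit axiom of the algebra $(X,\xi)$ (recall that $(\widetilde{e}_n,[u])=\eta^L_X(u)$), the whole argument comes down to identifying the instruction $\phi$ for the left-hand side and checking the hypotheses of \cref{cor:coherence}.

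First I would take the candidate $\phi := \id{n}{L1}{\widetilde{e}_{n-1}}\comp{n-1}{L1}\widetilde{e}_n\in(L1)_n$, computed in the free weak $\omega$-category $L1=(L1,\mu^L_1)$. The crucial point is that $\phi$ and $\widetilde{e}_n$ have the same arity: since $\ar\colon L\to T$ is a monad map, it carries this composite in $L1$ to the corresponding one in the free \emph{strict} $\omega$-category $T1$, giving
\[
\ar(\phi)=[n-1]^{(n)}\comp{n-1}{T1}[n]=[n]=\ar(\widetilde{e}_n),
\]
where the middle equality is the \emph{strict} left unit law, which holds on the nose in $T1$. Moreover $\phi$ and $\widetilde{e}_n$ are parallel: by \cref{prop:s-and-t-for-id-and-ast} both have source and target $\widetilde{e}_{n-1}$. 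Thus $\phi$ and $\widetilde{e}_n$ satisfy the hypotheses of \cref{cor:coherence} with $\kk=[n]$.

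The remaining --- and main --- task is to verify the equality $\id{n}{X}{x}\comp{n-1}{X}u=\xi(\phi,[u])$. Unwinding \cref{def:id-and-comp-in-weak-omega-cat}, the left-hand side is $\xi$ applied to the two-step pasting whose outer instruction is $\widetilde{e}_n\comp{n-1,n}{L1}\widetilde{e}_n$ and whose two slots are filled by $\id{n}{X}{x}=\xi\bigl(\id{n}{L1}{\widetilde{e}_{n-1}},[x]\bigr)$ and $u=\xi(\widetilde{e}_n,[u])$. Applying the associativity axiom $\xi\circ\mu^L_X=\xi\circ L\xi$ collapses this into a single application of $\xi$: the instructions combine under $\mu^L_1$ to $\id{n}{L1}{\widetilde{e}_{n-1}}\comp{n-1}{L1}\widetilde{e}_n=\phi$, while the underlying pasting diagram combines under $\mu^T_X$ to $[u]$, the padding cell $x$ being absorbed precisely because the unit law is \emph{strict} at the level of $T$. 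This bookkeeping with $\mu^L$, $\mu^T$, and the compatibility of $\comp{}{}$ and $\id{}{}{}$ with the monad multiplication is the technical crux, and I expect it to be the main obstacle, since one must track carefully how the identity padding disappears upon flattening. Once it is in place, \cref{cor:coherence} yields an invertible $(n+1)$-cell $\xi(\phi,[u])\to\xi(\widetilde{e}_n,[u])$, that is, $\id{n}{X}{x}\comp{n-1}{X}u\sim u$. The right unit law is entirely symmetric, using instead $\phi':=\widetilde{e}_n\comp{n-1}{L1}\id{n}{L1}{\widetilde{e}_{n-1}}$, whose arity is $[n]\comp{n-1}{T1}[n-1]^{(n)}=[n]$ by the strict \emph{right} unit law in $T1$.
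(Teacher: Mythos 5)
Your proof is correct, and the key steps check out. One thing to note up front: this paper does not actually prove the unit law at all --- it is imported wholesale from \cite[Proposition~3.3.5]{FHM1} --- so there is no in-paper argument to compare against; your reconstruction via coherence is the natural (and surely the intended) route. Concretely, your reduction is sound: writing $u=\xi(\widetilde{e}_n,[u])$ by the algebra unit axiom and $\id{n}{X}{x}\comp{n-1}{X}u=\xi\bigl(\phi,[u]\bigr)$ with $\phi=\id{n}{L1}{\widetilde{e}_{n-1}}\comp{n-1}{L1}\widetilde{e}_n$ works because, applying $\xi\circ L\xi=\xi\circ\mu^L_X$ to the evident element of $(LLX)_n$, the instruction components multiply under $\mu^L_1$ to $\phi$ (this is literally the definition of $\comp{n-1}{L1}$ in $(L1,\mu^L_1)$), while the diagram components multiply under $\mu^T_X$ to $[u]$ by the strict unit law in the free strict $\omega$-category $TX$. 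Your arity computation $\ar(\phi)=[n-1]^{(n)}\comp{n-1}{T1}[n]=[n]$ is likewise justified, since $\ar_1$ is an algebra map from $(L1,\mu^L_1)$ to $(T1,\mu^T_1\circ\ar_{T1})$ whose operations coincide with the strict ones (a fact the paper itself uses in \cref{con:padding}), and the parallelism of $\phi$ and $\widetilde{e}_n$ follows from \cref{prop:s-and-t-for-id-and-ast} together with globularity of $\eta^L_1$. With those hypotheses in place, \cref{cor:coherence} applied to $\kk=[n]$ and the diagram $[u]$ gives the invertible $(n+1)$-cell, and the right unit law is symmetric as you say.
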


\begin{theorem}[{\cite[Theorem 3.3.7]{FHM1}}]\label{thm:pasting-invertible-cells}
    Let $(X,\xi)$ be a weak $\omega$-category and let $(\phi,\uu) \in (LX)_n$ with $n\geq 1$ and
    \[
    \ar(\phi) = (T!)(\uu) = \begin{bmatrix}
    k_0 & & \dots & & k_r\\
    & \underline k_1 & \dots & \underline k_r &
    \end{bmatrix}.
    \]
    Suppose that, for each $0 \le i \le r$ with $k_i = n$, the $n$-cell $u_i$ appearing in $\uu$ is invertible.
    Then $\xi(\phi,\uu)$ is invertible.
\end{theorem}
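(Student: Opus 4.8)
The plan is to prove invertibility of $\xi(\phi,\uu)$ via the coinductive characterisation recorded in \cref{rmk:coinduction}. Let $W\subseteq\coprod_{m\in\N}X_m$ be the set of all cells of the form $\xi(\psi,\vv)$, ranging over every dimension $m\ge 1$ and every $(\psi,\vv)\in(LX)_m$ for which each constituent cell of $\vv$ of dimension $m$ is invertible. Every invertible cell of $X$ lies in $W$, since an $m$-cell $v$ equals $\xi(\widetilde{e}_m,[v])$, and the cell $\xi(\phi,\uu)$ of the statement lies in $W$ by hypothesis. Thus it suffices to verify the inclusion $W\subseteq\Phi^{X}(W)$ and then invoke \cref{rmk:coinduction}.

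So I would fix $c=\xi(\phi,\uu)\in W$ of dimension $n$, with shape $\kk=\ar(\phi)$ as displayed, and first produce a candidate inverse $\check c$. Reversing the linear order of the pasting scheme sends $\kk$ to a pasting scheme $\check\kk$ of the same dimension, since the defining inequalities $k_{i-1}>\underline k_i<k_i$ are symmetric. I would then replace each top-dimensional $u_i$ (those with $k_i=n$) by a chosen inverse $\check u_i$ supplied by \cref{def:invertible}, leaving the lower-dimensional constituents (which behave like identities in the top direction) untouched. This yields $\check c=\xi(\check\phi,\check\uu)\in W$ for a suitable instruction $\check\phi$ over $\check\kk$.

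The heart of the argument is the construction of the witnesses $p\colon c\comp{n-1}{}\check c\to\id{n}{}{x}$ and $q\colon\check c\comp{n-1}{}c\to\id{n}{}{y}$ as explicit standard composites, together with the verification that $p,q\in W$. I would build $p$ by cancelling the pasting diagram against its reverse from the middle outward: each adjacent pair $u_i,\check u_i$ is collapsed using the corresponding invertibility witness from \cref{def:invertible}, suitably whiskered by the neighbouring lower cells; the rearrangements needed to bring these pairs into composable position are performed by the coherence cells of \cref{cor:coherence}; and the spurious identities left behind are absorbed by the unit-law cells of \cref{lem:unit-law}. The crucial bookkeeping observation is that every \emph{structural} cell occurring here -- coherence and unit cells alike -- is a single pasting instruction applied to a pasting diagram of dimension $\le n$, and hence contributes \emph{no} $(n+1)$-dimensional constituent to the assembled composite. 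Consequently the only genuinely top-dimensional constituents of $p$ are the invertibility witnesses of the $u_i$, each of which is invertible; therefore $p\in W$, and symmetrically $q\in W$, giving $c\in\Phi^{X}(W)$ as required.

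I expect the main obstacle to lie precisely in this last step for a general pasting scheme: making the ``cancel from the middle outward'' procedure precise for arbitrary length $r$ and for mixed dimensions $k_i<n$, and confirming, via the source/target formulas of \cref{prop:s-and-t-for-id-and-ast}, that the composite really has source $c\comp{n-1}{}\check c$ and target $\id{n}{}{x}$. In particular the intermediate composites produced during the cancellation are in general \emph{not} parallel to one another, so reconciling them requires a careful and repeated appeal to the unit law rather than to coherence alone -- the same non-parallelness phenomenon that the padding construction developed later in this paper is designed to tame in a harder setting.
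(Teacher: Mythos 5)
First, a caveat on the comparison itself: this paper contains no proof of \cref{thm:pasting-invertible-cells} --- the statement is imported verbatim from \cite[Theorem~3.3.7]{FHM1} --- so your proposal can only be measured against that earlier work, not against anything in the present text. Your overall framework is the right one: taking $W$ to consist of all cells $\xi(\psi,\vv)$ whose top-dimensional constituents are invertible and verifying $W\subseteq\Phi^X(W)$ via \cref{rmk:coinduction} is exactly the kind of coinductive argument this theorem calls for.

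However, the first substantive step of your proof, the construction of the candidate inverse, is wrong. Inverting a composite along its $(n-1)$-dimensional boundary reverses the order of composition \emph{only} along that boundary; gluings along lower-dimensional boundaries must keep their order. Take $n=2$, $\kk=\begin{bmatrix}2&&2\\&0&\end{bmatrix}$, and $\uu$ consisting of $u_0\colon f\to g$ (with $f,g\colon a\to b$) and $u_1\colon h\to k$ (with $h,k\colon b\to c$). The inverse of the resulting ``horizontal'' composite must be built from $\check u_0$ and $\check u_1$ \emph{in the same order} (in a strict $2$-category, the vertical inverse of a horizontal composite is the horizontal composite of the vertical inverses), whereas your reversed table, which places $\check u_1$ before $\check u_0$, is not even a pasting diagram: it would require $t_0(\check u_1)=c$ to equal $s_0(\check u_0)=a$. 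The correct candidate inverse in fact has the \emph{same} shape $\kk$, not the reversed one: only the maximal runs of $n$-cells glued along dimension $n-1$ are internally reversed (and within such a run the scheme entries are palindromic, so $\kk$ is unchanged), while all lower-dimensional gluings stay put. Beyond this, the step you yourself flag as the ``main obstacle'' --- assembling $p$ and $q$ for an arbitrary scheme --- is not a routine verification but is where essentially all of the work lies: the cells supplied by \cref{def:invertible}, \cref{cor:coherence}, and \cref{lem:unit-law} at successive stages of your cancellation have sources and targets that agree only up to $\sim$, not on the nose, so they cannot simply be pasted together into a single cell $\xi(\psi,\vv)$ witnessing membership in $\Phi^X(W)$; resolving exactly this kind of non-parallelism is what motivates the padding construction (\cref{con:padding}) later in this paper, and no substitute for it is provided in your sketch. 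As it stands, the proposal is an outline whose key construction fails and whose key verification is deferred.
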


\begin{corollary}[{\cite[Corollary 3.3.15]{FHM1}}]
\label{cor:eq-rel}
    Let $X$ be a weak $\omega$-category. Then $\sim$ is an equivalence relation on the set of cells of $X$. 
\end{corollary}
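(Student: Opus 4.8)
The plan is to verify the three defining properties of an equivalence relation separately on each set $X_n$ of cells. The guiding principle is that $x\sim y$ asserts the existence of an invertible $(n+1)$-cell $x\to y$, so reflexivity, symmetry, and transitivity amount, respectively, to the invertibility of an identity cell, of a chosen inverse, and of a binary composite. Each of these reduces to an application of \cref{thm:pasting-invertible-cells}, combined with the source/target bookkeeping of \cref{prop:s-and-t-for-id-and-ast}.

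For reflexivity, given an $n$-cell $x$ I would take the witness $\id{n+1}{X}{x}$. By \cref{prop:s-and-t-for-id-and-ast}(1)(b) this is a cell $x\to x$, and writing it as $\xi\bigl(\spi([n]^{(n+1)}),[x]\bigr)$, its shape $[n]^{(n+1)}$ has $r=0$ and $k_0=n$, so its unique constituent cell $x$ has dimension $n<n+1$. Hence the hypothesis of \cref{thm:pasting-invertible-cells} (which only constrains those constituents whose dimension equals the top dimension $n+1$) is \emph{vacuously} satisfied, and $\id{n+1}{X}{x}$ is invertible; thus $x\sim x$. For symmetry, suppose $x\sim y$ via an invertible $(n+1)$-cell $u\colon x\to y$ with witnessing data $(\check u,p,q)$ as in \cref{def:invertible}, read at dimension $n+1$: so $\check u\colon y\to x$, and $p\colon u\comp{n}{}\check u\to\id{n+1}{}{x}$ and $q\colon \check u\comp{n}{}u\to\id{n+1}{}{y}$ are invertible. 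Then the triple $(u,q,p)$ is precisely the data required to exhibit $\check u\colon y\to x$ as invertible, with the roles of $p$ and $q$ interchanged; hence $y\sim x$. No appeal to the coinductive structure of \cref{def:invertible} is needed here, since the inverse and its witnesses are already at our disposal.

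For transitivity, suppose $x\sim y$ and $y\sim z$, via invertible $(n+1)$-cells $u\colon x\to y$ and $v\colon y\to z$. Since $t_n(u)=y=s_n(v)$, the composite $u\comp{n}{}v$ is defined, and \cref{prop:s-and-t-for-id-and-ast}(2)(b) yields $u\comp{n}{}v\colon x\to z$. Writing it as $\xi(\phi,\uu)$ with $\phi=\widetilde{e}_{n+1}\comp{n,n+1}{L1}\widetilde{e}_{n+1}$ and $\uu$ the evident pasting diagram having $u_0=u$ and $u_1=v$, the shape $\ar(\phi)$ has $r=1$ and $k_0=k_1=n+1$, and both top-dimensional constituents $u$ and $v$ are invertible. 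Therefore \cref{thm:pasting-invertible-cells} makes $u\comp{n}{}v$ invertible, so $x\sim z$.

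Since \cref{thm:pasting-invertible-cells} is already established, I do not expect any serious obstacle in this corollary; its genuine content lies entirely in that pasting theorem. The only points demanding care are notational: correctly shifting the dimension index in \cref{def:invertible} from $n$ to $n+1$ when unpacking $\sim$ on $n$-cells, and recognising in the reflexivity step that the invertibility hypothesis of \cref{thm:pasting-invertible-cells} is vacuous for an identity cell. These are exactly the two places where a careless reader might expect to invoke the unit law or coinduction, whereas the pasting theorem dispatches them directly.
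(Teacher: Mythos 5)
Your proof is correct: reflexivity via the (vacuously satisfied) hypothesis of \cref{thm:pasting-invertible-cells} applied to $\id{n+1}{X}{x}$, symmetry by reusing the witnessing data of \cref{def:invertible} with the roles of $p$ and $q$ interchanged, and transitivity via \cref{thm:pasting-invertible-cells} applied to $u\comp{n}{}v$ are all legitimate, with the dimension bookkeeping handled properly. The paper itself gives no inline proof (the corollary is quoted from \cite[Corollary~3.3.15]{FHM1}), and your derivation is essentially the expected one, reducing everything to the invertibility of identity cells, the symmetry built into the definition of invertible cell, and the pasting theorem.
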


\begin{corollary}[{\cite[Corollary 3.3.17]{FHM1}}]
\label{cor:invariance}
    Let $X$ be a weak $\omega$-category, $n\geq 1$, and $u,v\colon x\to y$ be a parallel pair of $n$-cells in $X$ such that $u \sim v$.
    Suppose that $u$ is invertible.
    Then $v$ is invertible too.
\end{corollary}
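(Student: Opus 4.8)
The plan is to show that the very inverse $\check u$ of $u$ also serves as an inverse of $v$. Since $u$ is invertible, \cref{def:invertible} furnishes an $n$-cell $\check u\colon y\to x$ together with invertible $(n+1)$-cells $p\colon u\comp{n-1}{}\check u\to\id{n}{}{x}$ and $q\colon \check u\comp{n-1}{}u\to\id{n}{}{y}$; and since $u\sim v$ there is an invertible $(n+1)$-cell $\alpha\colon u\to v$. As $v$ is parallel to $u$, the $n$-cell $\check u\colon y\to x$ is again an admissible candidate inverse for $v$, so by \cref{def:invertible} it suffices to establish $v\comp{n-1}{}\check u\sim\id{n}{}{x}$ and $\check u\comp{n-1}{}v\sim\id{n}{}{y}$: a witnessing invertible $(n+1)$-cell in each case is exactly what the definition of invertibility of $v$ demands.

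The crux is to transport invertibility from $u$ to $v$ by whiskering with $\alpha$. I would form the whiskered $(n+1)$-cell $\alpha\comp{n-1}{}\check u$. Reading off its $n$-dimensional source and target from \cref{prop:s-and-t-for-id-and-ast}(2) (the case $m_2\le l<m_1$, taken with $m_1=n+1$, $m_2=n$, and $l=n$) identifies them as $u\comp{n-1}{}\check u$ and $v\comp{n-1}{}\check u$, so that $\alpha\comp{n-1}{}\check u\colon u\comp{n-1}{}\check u\to v\comp{n-1}{}\check u$. Its underlying pasting diagram has $\alpha$ as its unique cell of top dimension $n+1$, and $\alpha$ is invertible; hence \cref{thm:pasting-invertible-cells} ensures that $\alpha\comp{n-1}{}\check u$ is invertible, giving $u\comp{n-1}{}\check u\sim v\comp{n-1}{}\check u$. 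The symmetric whiskering $\check u\comp{n-1}{}\alpha$ (now using the case $m_1\le l<m_2$ of the same proposition) is likewise invertible and yields $\check u\comp{n-1}{}u\sim\check u\comp{n-1}{}v$.

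It remains to assemble these facts using that $\sim$ is an equivalence relation (\cref{cor:eq-rel}). The invertible cell $p$ gives $u\comp{n-1}{}\check u\sim\id{n}{}{x}$, so by symmetry and transitivity $v\comp{n-1}{}\check u\sim u\comp{n-1}{}\check u\sim\id{n}{}{x}$; dually $q$ gives $\check u\comp{n-1}{}v\sim\check u\comp{n-1}{}u\sim\id{n}{}{y}$. These are precisely the two relations required, so $v$ is invertible with inverse $\check u$, completing the argument.

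I expect the only delicate point to be the bookkeeping of sources and targets for the whiskered cells — in particular, confirming that $\alpha\comp{n-1}{}\check u$ genuinely runs from $u\comp{n-1}{}\check u$ to $v\comp{n-1}{}\check u$ (and the dual statement for $\check u\comp{n-1}{}\alpha$) — which is a careful but routine application of \cref{prop:s-and-t-for-id-and-ast}. Everything else is a direct combination of \cref{thm:pasting-invertible-cells} and \cref{cor:eq-rel}.
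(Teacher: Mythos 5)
Your proof is correct and is essentially the expected argument: whiskering the invertible cell $\alpha\colon u\to v$ with $\check u$ on each side yields invertible cells by \cref{thm:pasting-invertible-cells} (the only top-dimensional cell in each pasting being $\alpha$), and then symmetry and transitivity of $\sim$ (\cref{cor:eq-rel}) convert the witnesses $p$ and $q$ for $u$ into invertible $(n+1)$-cells $v\comp{n-1}{}\check u\to \id{n}{}{x}$ and $\check u\comp{n-1}{}v\to \id{n}{}{y}$, so that $\check u$ serves as an inverse of $v$ exactly as \cref{def:invertible} demands; your source/target bookkeeping via \cref{prop:s-and-t-for-id-and-ast}(2)(d)--(e) is also accurate. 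Note that the present paper does not itself prove this statement but imports it from \cite[Corollary~3.3.17]{FHM1}; your argument is a correct, self-contained reconstruction using only results quoted in this paper, all of which precede that corollary in \cite{FHM1}, so there is no circularity.
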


\begin{remark}
    There are two kinds of approaches to infinite-dimensional categorical structures, namely \emph{inductive} and \emph{coinductive} ones.
    The purpose of this remark is to address why we adopt the latter instead of the former in this paper; if the reader is not already familiar with this dichotomy, they may safely skip to the next section.

    The main difference between the two kinds of approaches is that, in an inductive setting, the notion of ``invertible cell'' is a primitive one.
    Implemented in our setting (analogously to the case of \emph{strict} $\omega$-categories treated in \cite{Henry_Loubaton_inductive}), this translates to considering pairs $(X,E)$ where $X$ is a weak $\omega$-category in the sense of \cref{def:weak-omega-cats} and $E \subseteq \coprod_{n \ge 1}X_n$ is a set of \emph{marked} cells, which are to be thought of as ``abstract invertible cells.''
    Of course the set $E$ must satisfy some conditions in order to present a reasonable notion of invertibility, and our best guess for the ``correct'' conditions (cf.~\cite[Definition 2.15 and Theorem 3.38]{Henry_Loubaton_inductive}) is that
    \begin{enumerate}
        \item \label{cond:closed-under-composition} $E$ is ``closed under all pastings'' in the sense analogous to \cref{thm:pasting-invertible-cells}, and
        \item \label{cond:saturated} $\Phi^X(E) = E$ holds.
    \end{enumerate}
    Note that (\ref{cond:closed-under-composition}) in particular implies that all identities and coherence cells 
    (as in \cref{cor:coherence}) 
    are always marked.
    Both directions of (\ref{cond:saturated}) are non-trivial, where $E \subseteq \Phi^X(E)$ says that each marked cell is coinductively invertible (with all witness cells also marked), and the inclusion $\Phi^X(E) \subseteq E$ may be regarded as a sort of \emph{completeness} (as in Segal spaces) or \emph{saturation} (as in complicial sets).

    In fact, we suspect that it is possible to prove (though with additional non-trivial work) marked analogues of the results in \cref{sec:2-out-of-3} and possibly those in \cref{sec:weak-weak} after making the following modifications.
    \begin{itemize}
        \item The relation ``$x \sim y$'' should be interpreted as ``there exists a \emph{marked} (as opposed to \emph{invertible}) cell $x \to y$.''
        \item In \cref{def:omega-weak-eq}, define an \emph{$\omega$-weak equivalence} to be a marking-preserving strict $\omega$-functor that is essentially $\omega$-surjective (in the sense of \cref{def:ess-surj} but with the modified meaning of ``$x \sim y$'') and further reflects marked cells (cf.\ \cite[Proposition 3.33]{Henry_Loubaton_inductive} and \cref{lem:weak-eq-refl-inv}).
        \item In \cref{defn:TheQ}, replace the adjunction with the suitable marked variant, add to $\mathcal{I'}$ the inclusion of the standard (unmarked) $n$-globe into the standard marked $n$-globe for $n \ge 1$, and add their images under the left adjoint to $\mathcal{I}$. 
        (These modifications make sure that the corresponding ``trivial fibrations'' reflect marked cells.)
    \end{itemize}

    The caveat is that, if one is to actually pursue the inductive approach, one must consider pairs $(X,E)$ which do not necessarily satisfy the condition (\ref{cond:saturated}) in order to obtain a well-behaved category so that e.g., the adjunction in the third bullet point above actually exists.
    (Whether one chooses to impose (\ref{cond:closed-under-composition}) does not matter too much in this respect.)
    It is expected that one can then single out those pairs that do satisfy ((\ref{cond:closed-under-composition}) and) (\ref{cond:saturated}) as the fibrant objects using a left semi-model structure analogous to that in 
    \cite[Theorem~2.43]{Henry_Loubaton_inductive}.
    Since our arguments below rely on (\ref{cond:saturated}) (or more precisely, an analogue of \emph{prefibrancy} in the sense of \cite[Definition~3.18]{Henry_Loubaton_inductive}), the best we can prove in the marked setting is the 2-out-of-3 property only for $\omega$-weak equivalences between putative \emph{fibrant} objects 
    (cf.\ \cite[Proposition~3.33]{Henry_Loubaton_inductive}).
    In fact, for the above definition of $\omega$-weak equivalence, all three parts of the 2-out-of-3 property (\ref{item:A}, \ref{item:B}, and \ref{item:C} in the next section) can be shown to fail between not-necessarily-fibrant marked weak $\omega$-categories;
    it seems to us that the ``correct'' notion of weak equivalence should be defined, and also the 2-out-of-3 property for that class should be proved, using a more abstract means as done in \cite{Henry_Loubaton_inductive}.
    Thus, instead of generalising our work to include such partial results, we simply leave a detailed development of the inductive approach for future work.
\end{remark}

\section{The 2-out-of-3 property for \texorpdfstring{$\omega$}{ω}-weak equivalences}
\label{sec:2-out-of-3}
In this section, we define and study the class of \emph{$\omega$-weak equivalences} between weak $\omega$-categories.
In particular, we show that this class enjoys the \emph{2-out-of-3 property}; that is, for strict $\omega$-functors $f\colon X\to Y$ and $g\colon Y\to Z$ between weak $\omega$-categories, if two of $f$, $g$, and $gf$ are $\omega$-weak equivalences, then so is the third.
In other words, it is the conjunction of the following three statements.
\begin{itemize}
    \labeleditem{(A)}
    \label{item:A} If $f$ and $g$ are $\omega$-weak equivalences, then so is $gf$.
    \labeleditem{(B)}
    \label{item:B} If $g$ and $gf$ are $\omega$-weak equivalences, then so is $f$.
    \labeleditem{(C)}
    \label{item:C} If $f$ and $gf$ are $\omega$-weak equivalences, then so is $g$. 
\end{itemize}

After defining $\omega$-weak equivalences, we prove the easy two thirds (\ref{item:A} and \ref{item:B}) of the 2-out-of-3 property in \cref{subsec:def-omega-weak-eq}. 
In order to prove the other third \ref{item:C} of the 2-out-of-3 property, we study whiskering maps in a weak $\omega$-category in \cref{subsec:whiskering}, which culminates in a proof of a key intermediate result (\cref{lem:invertible-u-ess-surj}).
Using this, we complete our proof of the 2-out-of-3 property in \cref{subsec:2-out-of-3}.
Finally, we mention some additional properties of $\omega$-weak equivalences in \cref{subsec:other-properties-omega-weak-eq}. 

\subsection{Essential \texorpdfstring{$n$}{n}-surjectivity and \texorpdfstring{$\omega$}{ω}-weak equivalences}
\label{subsec:def-omega-weak-eq}
The following definition concerns arbitrary globular maps between weak $\omega$-categories (and not just strict $\omega$-functors) because we shall apply it also to e.g.\ \emph{whiskering maps}, which are not strict $\omega$-functors in general. 

\begin{definition}[Cf.~{\cite[Definition~4]{Baez-Shulman}}]
\label{def:ess-surj}
    Let $X$ and $Y$ be weak $\omega$-categories and $f\colon X\to Y$ be a globular map between the underlying globular sets of $X$ and $Y$.
    \begin{itemize}
        \item $f$ is \emph{essentially $0$-surjective} if for each $y\in Y_0$ there exists $x\in X_0$ such that $fx\sim y$. 
        \item $f$ is \emph{essentially $(n+1)$-surjective} ($n\in\N$) if $f$ is essentially $0$-surjective and for each $x,x'\in X_0$, the induced globular map $f_{x,x'}\colon X(x,x')\to Y(fx,fx')$ is essentially $n$-surjective.
        \item $f$ is \emph{essentially $\omega$-surjective} if it is essentially $n$-surjective for all $n\in\N$.\qedhere
    \end{itemize}
\end{definition}
Unravelling the induction, a globular map $f\colon X\to Y$ between weak $\omega$-categories is essentially $n$-surjective ($n\geq 0$) if and only if
\begin{itemize}
    \item for each $y\in Y_0$, there exists $x\in X_0$ such that $fx\sim y$, and
    \item for any $1\leq k\leq n$, any parallel pair $(u,v)$ of $(k-1)$-cells in $X$, and any $k$-cell $w\colon fu\to fv$ in $Y$, there exists a $k$-cell $\overline w\colon u\to v$ in $X$ such that $f\overline w\sim w$. (The case $k=2$ may be visualised as below.)
\end{itemize}
\[
\label{eqn:omega-weak-eq-diagram}
    \begin{tikzpicture}[baseline=-\the\dimexpr\fontdimen22\textfont2\relax ]
      \node(0) at (0,1) {$\bullet$};
      \node(1) at (2.5,1) {$\bullet$};
      \draw [->,bend left=30]  (0) to node[auto, labelsize] {$u$} (1);
      \draw [->,bend right=30] (0) to node[auto, swap,labelsize] {$v$} (1);
      \draw [2cell,dashed]  (0.8,1.25) to node[auto,swap,labelsize] {$\overline w$} (0.8,0.75);
      \node(2) at (0,-1) {$\bullet$};
      \node(3) at (2.5,-1) {$\bullet$};
      \draw [->,bend left=30]  (2) to node[auto, labelsize] {$fu$} (3);
      \draw [->,bend right=30] (2) to node[auto, swap,labelsize] {$fv$} (3);
      \draw [2cell,dashed]  (0.9,-0.75) to node[auto,swap,labelsize] {$f\overline w$} (0.9,-1.25);
      \draw [2cell]  (1.6,-0.75) to node[auto,labelsize] {$w$} (1.6,-1.25);
      \node at (1.25,-1) {$\sim$};
      \node(x) at (-1,1) {$X$};
      \node(y) at (-1,-1) {$Y$};
      \draw [->]  (x) to node[auto, swap, labelsize] {$f$} (y);
\end{tikzpicture}
\]

For example, if we regard a functor $f \colon X \to Y$ between ordinary categories as a strict $\omega$-functor between weak $\omega$-categories (here we define the underlying globular set of a category $X$ by setting $s^X_k,t^X_k\colon X_{k+1}\to X_k$ to be the identity function for all $k\geq 1$), then $f$ is
\begin{itemize}
    \item essentially $0$-surjective if and only if it is essentially surjective on objects (in the usual sense),
    \item essentially $1$-surjective if and only if it is essentially surjective on objects and full, and
    \item essentially $2$-surjective if and only if it is essentially surjective on objects, full, and faithful (because the existence of $w$ as above is equivalent to $fu=fv$, and similarly for $\overline{w}$), if and only if it is essentially $\omega$-surjective.
\end{itemize}

\begin{definition}
\label{def:omega-weak-eq}
    An \emph{$\omega$-weak equivalence} is an essentially $\omega$-surjective strict $\omega$-functor between weak $\omega$-categories.
\end{definition}

It is easy to see from the above rephrasing of essential $n$-surjectivity that, when restricted to those between \emph{strict} $\omega$-categories, our definition of $\omega$-weak equivalence recovers precisely \cite[Definition~4.7]{Lafont_Metayer_Worytkiewicz_folk_model_str_omega_cat}.

Notice that a globular map $f\colon X\to Y$ between weak $\omega$-categories is essentially $\omega$-surjective if and only if it is essentially $0$-surjective and the induced globular map $f_{x,x'}\colon X(x,x')\to Y(fx,fx')$ is essentially $\omega$-surjective for each $x,x'\in X_0$. The following is an immediate consequence of this fact.
\begin{proposition}
\label{prop:omega-weak-eq-coinductively}
    A strict $\omega$-functor $f\colon X\to Y$ between weak $\omega$-categories is an $\omega$-weak equivalence if and only if it is essentially $0$-surjective and the induced strict $\omega$-functor $f_{x,x'}\colon X(x,x')\to Y(fx,fx')$ is an $\omega$-weak equivalence for each $x,x'\in X_0$.
\end{proposition}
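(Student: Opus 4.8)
The plan is to read off the proposition from the equivalence recorded just above its statement, namely that a globular map $f$ is essentially $\omega$-surjective if and only if it is essentially $0$-surjective and each induced map $f_{x,x'}$ is essentially $\omega$-surjective. For completeness I would first re-derive this equivalence by unwinding the inductive \cref{def:ess-surj-ess-inj}. In the forward direction, essential $\omega$-surjectivity of $f$ is essential $n$-surjectivity for all $n$; the case $n=0$ yields essential $0$-surjectivity, while essential $(n+1)$-surjectivity for all $n$ says exactly that each $f_{x,x'}$ is essentially $n$-surjective for all $n$, i.e.\ essentially $\omega$-surjective. The converse is the same chain of definitional unwindings read in reverse: from essential $0$-surjectivity of $f$ together with essential $\omega$-surjectivity of every $f_{x,x'}$, one reads off essential $(n+1)$-surjectivity of $f$ for each $n$ directly from the definition.

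The second ingredient is that, since $f\colon X\to Y$ is a \emph{strict} $\omega$-functor, each induced map $f_{x,x'}\colon X(x,x')\to Y(fx,fx')$ on hom weak $\omega$-categories is again a strict $\omega$-functor. This is part of the functoriality of the hom construction recalled in \cite[Section~2.5]{FHM1}, and is already implicit in the statement of the proposition (which speaks of ``the induced strict $\omega$-functor $f_{x,x'}$''). It is precisely this fact that allows the passage between ``$f_{x,x'}$ is essentially $\omega$-surjective'' and ``$f_{x,x'}$ is an $\omega$-weak equivalence,'' since by \cref{def:omega-weak-eq} an $\omega$-weak equivalence is nothing but an essentially $\omega$-surjective strict $\omega$-functor.

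Combining the two ingredients gives the desired chain of equivalences: $f$ is an $\omega$-weak equivalence $\iff$ $f$ is essentially $\omega$-surjective (as $f$ is assumed to be a strict $\omega$-functor) $\iff$ $f$ is essentially $0$-surjective and each $f_{x,x'}$ is essentially $\omega$-surjective $\iff$ $f$ is essentially $0$-surjective and each $f_{x,x'}$ is an $\omega$-weak equivalence. I expect no genuine obstacle here; the only subtlety worth flagging is to invoke that each $f_{x,x'}$ is a strict $\omega$-functor (and not merely a globular map), so that the terminology ``$\omega$-weak equivalence'' legitimately applies to it on the right-hand side.
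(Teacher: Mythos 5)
Your proposal is correct and follows essentially the same route as the paper: the paper records exactly the observation you re-derive (essential $\omega$-surjectivity of $f$ is equivalent to essential $0$-surjectivity plus essential $\omega$-surjectivity of every $f_{x,x'}$) immediately before the proposition and declares the proposition an immediate consequence, with the strict $\omega$-functoriality of $f_{x,x'}$ taken as given from the hom construction. Nothing is missing.
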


\begin{remark}
\label{rmk:omega-weak-eq-coinductively}
    The statement of \cref{prop:omega-weak-eq-coinductively} can be taken as an alternative, \emph{coinductive} definition of $\omega$-weak equivalences (see \cref{rmk:coinduction} for details of coinduction). More precisely, define the monotone map 
    \[\Psi\colon \mathcal{P}\bigl(\mathrm{mor}(\WkCats{\omega})\bigr)\to\mathcal{P}\bigl(\mathrm{mor}(\WkCats{\omega})\bigr)\]
    on the powerset lattice $\mathcal{P}\bigl(\mathrm{mor}(\WkCats{\omega})\bigr)$ of the set $\mathrm{mor}(\WkCats{\omega})$ of all strict $\omega$-functors between weak $\omega$-categories as follows. It maps each $S\subseteq \mathrm{mor}(\WkCats{\omega})$ to the set $\Psi(S)$ of all strict $\omega$-functors which are essentially $0$-surjective and locally in $S$, i.e.,
    \begin{multline*}
    \Psi(S)=\bigl\{\, (f\colon X\to Y)\in\mathrm{mor}(\WkCats{\omega})  \,\big\vert\, \text{$f$ is essentially $0$-surjective and}\\
    \text{we have $\bigl(f_{x,x'}\colon X(x,x')\to Y(fx,fx')\bigr)\in S$ for each $x,x'\in X_0$}\,\bigr\}.
    \end{multline*}
    Then the set $E$ of all $\omega$-weak equivalences is the largest (post-)fixed point $\nu\Psi$ of $\Psi$. Indeed,
    \cref{prop:omega-weak-eq-coinductively} says that we have $E=\Psi(E)$, which implies $E\subseteq \nu\Psi$.
    On the other hand, given any $S\subseteq \mathrm{mor}(\WkCats{\omega})$ with $S\subseteq \Psi(S)$, it is easy to see that all strict $\omega$-functors in $S$ are essentially $n$-surjective, by induction on $n\in\N$. 
    Thus we have $S\subseteq E$, and hence $E$ is the largest (post-)fixed point $\nu\Psi$ of $\Psi$.
\end{remark}

We now prove two thirds of the 2-out-of-3 property. We prove these under rather weak assumptions, partly to illuminate the properties we actually use, and partly because we use some of the extra generality later.

The following implies \ref{item:A} of the 2-out-of-3 property by \cref{prop:strict-omega-functor-preserves-invertible-cells}.
\begin{lemma}
\label{prop:2-out-of-3-gf}
    Let $n\in\N$, $X,Y,Z$ be weak $\omega$-categories, $f \colon X \to Y$ be a globular map, and $g \colon Y \to Z$ be a globular map preserving invertible cells.  
    If $f$ and $g$ are essentially $n$-surjective, then so is $gf$. 
\end{lemma}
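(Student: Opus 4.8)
The plan is to verify essential $n$-surjectivity of $gf$ directly from the explicit characterisation of essential surjectivity recorded after \cref{def:omega-weak-eq}, rather than by setting up a separate induction. Two elementary observations make this a short argument. First, since $g$ preserves invertible cells, it preserves the relation $\sim$: an invertible witness $\alpha\to\beta$ is sent by $g$ to an invertible cell $g\alpha\to g\beta$, so $\alpha\sim\beta$ implies $g\alpha\sim g\beta$. Second, essential $n$-surjectivity entails essential $k$-surjectivity for every $0\le k\le n$ (immediate from the characterisation, which lists the lifting condition for all $1\le k\le n$), so the hypotheses on $f$ and $g$ are available at each intermediate level. I would also invoke that $\sim$ is transitive (\cref{cor:eq-rel}).

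For the essential $0$-surjectivity of $gf$: given $z\in Z_0$, essential $0$-surjectivity of $g$ yields $y\in Y_0$ with $gy\sim z$, and essential $0$-surjectivity of $f$ yields $x\in X_0$ with $fx\sim y$. Pushing the latter through $g$ gives $g(fx)\sim gy$, and transitivity gives $(gf)x=g(fx)\sim z$.

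For the higher-dimensional part (when $n\ge 1$), I would fix $1\le k\le n$, a parallel pair $(u,v)$ of $(k-1)$-cells in $X$, and a $k$-cell $w\colon (gf)u\to(gf)v$ in $Z$. Because $f$ is a globular map, $(fu,fv)$ is a parallel pair of $(k-1)$-cells in $Y$, and $w$ is a $k$-cell $g(fu)\to g(fv)$ in $Z$. Essential $k$-surjectivity of $g$ then produces a $k$-cell $v'\colon fu\to fv$ in $Y$ with $gv'\sim w$, and essential $k$-surjectivity of $f$ applied to $(u,v)$ and $v'$ produces a $k$-cell $\overline{w}\colon u\to v$ in $X$ with $f\overline{w}\sim v'$. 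Pushing the last relation through $g$ gives $(gf)\overline{w}=g(f\overline{w})\sim gv'$, and transitivity with $gv'\sim w$ yields $(gf)\overline{w}\sim w$, as required.

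I expect no serious obstacle: this is precisely the two-step lifting sketched for \ref{item:A} in the Introduction, reorganised so that the only properties invoked are essential surjectivity of $f$ and $g$ together with preservation of invertible cells by $g$, so neither map need be a strict $\omega$-functor. The single point deserving care is the bookkeeping of $\sim$, namely that it transports forward along $g$ (via preservation of invertibility) and is transitive, so that the two invertible witnesses obtained at separate stages combine into one witness $(gf)\overline{w}\sim w$.
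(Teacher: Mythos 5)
Your proof is correct, but it is organised differently from the paper's. The paper proves \cref{prop:2-out-of-3-gf} by induction on $n$ working directly with the inductive \cref{def:ess-surj-ess-inj}: the base case is exactly your essential-$0$-surjectivity argument, and the inductive step passes to hom weak $\omega$-categories via the factorisation $(gf)_{x,x'}=g_{fx,fx'}\circ f_{x,x'}$, applying the inductive hypothesis to $f_{x,x'}$ and $g_{fx,fx'}$ (noting that $g_{fx,fx'}$ again preserves invertible cells). You instead avoid any induction and work with the flattened, cell-level characterisation of essential $n$-surjectivity recorded after \cref{def:omega-weak-eq}, performing the two-step lift (first through $g$, then through $f$) at each dimension $k\le n$ and combining the two witnesses using preservation of invertibility by $g$ and transitivity of $\sim$ (\cref{cor:eq-rel}); this is precisely the argument sketched for \ref{item:A} in the Introduction, carried out under the weaker hypotheses of the lemma. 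The trade-off is that the paper's induction needs nothing beyond the definition itself, whereas your argument leans on the stated equivalence between the inductive definition and the explicit characterisation --- which the paper asserts without a detailed proof (it ultimately rests on \cref{cor:composition-in-hom} and \cref{rmk:hom-weak-omega-cat}) --- together with the observation that essential $n$-surjectivity entails essential $k$-surjectivity for all $k\le n$. Both routes are legitimate given what the paper provides; yours makes the cell-level mechanics explicit, while the paper's is the formulation that meshes directly with the hom-category induction used throughout \cref{sec:2-out-of-3} (e.g.\ in \cref{prop:2-out-of-3-f,prop:2-out-of-3-g}).
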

\begin{proof}
    We prove this by induction on $n$. 

    The base case $n=0$ is shown as follows. Take any $z\in Z_0$. Then since $g$ is essentially $0$-surjective, there exists $y\in Y_0$ with $gy\sim z$. Since $f$ is essentially $0$-surjective, there exists $x\in X_0$ with $fx\sim y$. Then we have $gfx\sim gy\sim z$ since $g$ preserves invertible cells.

    For the inductive step, let $n>0$. For each $x,x'\in X_0$, we have $(gf)_{x,x'}=g_{fx,fx'}\circ f_{x,x'}$. Here, $f_{x,x'}$ and $g_{fx,fx'}$ are essentially $(n-1)$-surjective, and $g_{fx,fx'}$ preserves invertible cells. Hence $(gf)_{x,x'}$ is essentially $(n-1)$-surjective by the inductive hypothesis.
\end{proof}

The following implies \ref{item:B} of the 2-out-of-3 property, by \cref{lem:weak-eq-refl-inv} below.
\begin{lemma}
\label{prop:2-out-of-3-f}
    Let $n\in\N$, $X,Y,Z$ be weak $\omega$-categories, $f \colon X \to Y$ be a globular map, and $g \colon Y \to Z$ be a globular map reflecting invertible cells.
    If $g$ is essentially $(n+1)$-surjective and $gf$ is essentially $n$-surjective, then $f$ is essentially $n$-surjective.
\end{lemma}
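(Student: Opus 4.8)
The plan is to prove the statement by induction on $n$, mirroring the inductive structure of essential surjectivity. The case $n=0$ is where the hypothesis that $g$ reflects invertible cells does the real work; the inductive step is then a routine passage to hom weak $\omega$-categories.

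For the base case $n=0$, I would take an arbitrary $y\in Y_0$ and aim to produce $x\in X_0$ with $fx\sim y$. First I would apply the essential $0$-surjectivity of $gf$ to the $0$-cell $gy\in Z_0$, obtaining $x\in X_0$ together with an invertible $1$-cell $w\colon gfx\to gy$ in $Z$. Next, since $g$ is essentially $(0+1)$-surjective, the induced globular map $g_{fx,y}\colon Y(fx,y)\to Z(gfx,gy)$ is essentially $0$-surjective, so there is a $1$-cell $v\colon fx\to y$ in $Y$ with $gv\sim w$ in $Z$. As $w$ is invertible and $gv\sim w$, \cref{cor:invariance} shows that $gv$ is invertible in $Z$; since $g$ reflects invertible cells, $v$ is invertible in $Y$, that is, $fx\sim y$.

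For the inductive step ($n\geq 1$), essential $0$-surjectivity of $f$ is exactly the already-established case $n=0$, whose hypotheses ($g$ essentially $1$-surjective and $gf$ essentially $0$-surjective) are implied by ours. It then remains to show that $f_{x,x'}\colon X(x,x')\to Y(fx,fx')$ is essentially $(n-1)$-surjective for each $x,x'\in X_0$. Here I would invoke the inductive hypothesis on the factorisation $(gf)_{x,x'}=g_{fx,fx'}\circ f_{x,x'}$: the map $g_{fx,fx'}$ is essentially $n$-surjective (as $g$ is essentially $(n+1)$-surjective), $(gf)_{x,x'}$ is essentially $(n-1)$-surjective (as $gf$ is essentially $n$-surjective), and $g_{fx,fx'}$ reflects invertible cells by \cref{rmk:hom-weak-omega-cat} together with the reflection property of $g$. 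The inductive hypothesis then yields essential $(n-1)$-surjectivity of $f_{x,x'}$, completing the step.

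The main obstacle is the base case, and specifically the bookkeeping around the relation $\sim$ as it appears in different dimensions: one must recognise that $\sim$ between parallel $1$-cells of $Z$ coincides with $\sim$ between the corresponding $0$-cells of the hom $Z(gfx,gy)$, and that invertibility passes between $Y(fx,fx')$ and $Y$ along the dimension shift of \cref{rmk:hom-weak-omega-cat}. These identifications are precisely what let the reflection hypothesis on $g$ descend to $g_{fx,fx'}$ and drive both the base case and the inductive step.
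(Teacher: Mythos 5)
Your proof is correct and follows essentially the same route as the paper's: induction on $n$, with the base case combining essential $0$-surjectivity of $gf$, essential $1$-surjectivity of $g$, \cref{cor:invariance}, and the reflection hypothesis, and the inductive step applying the inductive hypothesis to the factorisation $(gf)_{x,x'}=g_{fx,fx'}\circ f_{x,x'}$. Your additional care in spelling out why $g_{fx,fx'}$ reflects invertible cells (via \cref{rmk:hom-weak-omega-cat}) and why essential $0$-surjectivity of $f$ holds in the inductive step only makes explicit what the paper leaves implicit.
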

\begin{proof}
    We prove this by induction on $n$.

    The base case $n=0$ is shown as follows. Take any $y\in Y_0$. Then we obtain $gy\in Z_0$. Since $gf$ is essentially $0$-surjective, there exist $x\in X_0$ and an invertible $1$-cell $u\colon gfx\to gy$ in $Z$. Since $g$ is essentially $1$-surjective, there exists a $1$-cell $\overline u\colon fx\to y$ in $Y$ with $g\overline u\sim u$. By \cref{cor:invariance}, $g\overline u$ is invertible. Since $g$ reflects invertible cells, $\overline u$ is invertible. This shows $f$ is essentially $0$-surjective. 

    For the inductive step, let $n>0$. For each $x,x'\in X_0$, we have $(gf)_{x,x'}=g_{fx,fx'}\circ f_{x,x'}$. Here, $g_{fx,fx'}$ is essentially $n$-surjective and reflects invertible cells, and $(gf)_{x,x'}$ is essentially $(n-1)$-surjective. Hence  $f_{x,x'}$ is essentially $(n-1)$-surjective by the inductive hypothesis.
\end{proof}

\begin{proposition}
    \label{lem:weak-eq-refl-inv}
    Any $\omega$-weak equivalence reflects invertible cells.
    More generally, let $f \colon X \to Y$ be a globular map between (the underlying globular sets of) weak $\omega$-categories $X$ and $Y$.
    Suppose that
    \begin{itemize}
        \item[(i)] $f$ is essentially $\omega$-surjective,
        \item[(ii)] for each $n$-cell $x$ in $X$ with $n \ge 0$, there exists an invertible $(n+2)$-cell $\id{n+1}{Y}{fx} \sim f\bigl(\id{n+1}{X}{x}\bigr)$ in $Y$, and
        \item[(iii)] for each pair of $n$-cells $u \colon x \to x'$ and $u' \colon x' \to x''$ in $X$ composable along the $(n-1)$-dimensional boundary, with $n \ge 1$, there exists an invertible $(n+1)$-cell $f(u \comp{n-1}{X} u') \sim fu \comp{n-1}{Y} fu'$ in $Y$.
    \end{itemize}
    Then $f$ reflects invertible cells.
\end{proposition}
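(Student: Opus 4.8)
The plan is to derive the general statement from the coinductive characterisation of invertibility in \cref{rmk:coinduction}; the first sentence is then a special case, since any strict $\omega$-functor preserves identities and composites strictly and hence satisfies (ii) and (iii) (with equalities in place of the displayed $\sim$, which give $\sim$ by reflexivity). Set
\[
W=\bigl\{\,u\in X_n \,\big\vert\, n\geq 1 \text{ and } fu \text{ is invertible in } Y\,\bigr\},
\]
the class of cells that $f$ reflects. By \cref{rmk:coinduction} it suffices to show $W\subseteq\Phi^X(W)$. So I would fix $u\colon x\to y$ in $W\cap X_n$ and produce a candidate inverse $\check u\colon y\to x$ in $X$ together with cells $p\colon u\comp{n-1}{X}\check u\to\id{n}{X}{x}$ and $q\colon \check u\comp{n-1}{X}u\to\id{n}{X}{y}$ lying in $W$.

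First I would transport an inverse of $fu$ back along $f$. Since $fu$ is invertible, fix an inverse $\overline{v}\colon fy\to fx$ as in \cref{def:invertible}, so that $fu\comp{n-1}{Y}\overline{v}\sim\id{n}{Y}{fx}$ and $\overline{v}\comp{n-1}{Y}fu\sim\id{n}{Y}{fy}$. As $(y,x)$ is a parallel pair of $(n-1)$-cells in $X$ and $\overline{v}\colon fy\to fx$, the essential $n$-surjectivity guaranteed by (i) yields $\check u\colon y\to x$ in $X$ with $f\check u\sim\overline{v}$; fix an invertible witness $w\colon f\check u\to\overline{v}$.

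The core of the proof is the chain of equivalences
\[
f(u\comp{n-1}{X}\check u)\ \sim\ fu\comp{n-1}{Y}f\check u\ \sim\ fu\comp{n-1}{Y}\overline{v}\ \sim\ \id{n}{Y}{fx}\ \sim\ f(\id{n}{X}{x}),
\]
in which the outer two links are instances of (iii) and (ii) respectively, the third is the inverse property of $\overline{v}$, and the key middle link comes from whiskering: the composite $fu\comp{n-1}{Y}w$ runs, by \cref{prop:s-and-t-for-id-and-ast}, from $fu\comp{n-1}{Y}f\check u$ to $fu\comp{n-1}{Y}\overline{v}$, and it is invertible by \cref{thm:pasting-invertible-cells} because its sole top-dimensional constituent is the invertible cell $w$. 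Transitivity of $\sim$ (\cref{cor:eq-rel}) delivers the displayed equivalence, and applying the essential $(n+1)$-surjectivity from (i) to the resulting invertible $(n+1)$-cell in $Y$ produces $p\colon u\comp{n-1}{X}\check u\to\id{n}{X}{x}$ in $X$ with $fp$ invertible (by \cref{cor:invariance}), so that $p\in W$. Whiskering $w$ on the other side gives a symmetric chain for $\check u\comp{n-1}{X}u$ and hence $q\in W$. Thus $u\in\Phi^X(W)$, and the coinduction is complete.

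I expect the main obstacle to be this middle whiskering step. The point is to paste the single invertible $(n+1)$-cell $w$ with the $n$-cell $fu$ so that \cref{thm:pasting-invertible-cells} applies with $w$ as the only invertibility-requiring (top-dimensional) cell — this is what lets us avoid having to establish separately that identity cells are invertible — and then to verify via \cref{prop:s-and-t-for-id-and-ast} that the source and target of $fu\comp{n-1}{Y}w$ are exactly $fu\comp{n-1}{Y}f\check u$ and $fu\comp{n-1}{Y}\overline{v}$. Once the bookkeeping of these boundaries is in order, the remaining steps follow routinely from the established properties of $\sim$.
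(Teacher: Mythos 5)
Your proposal is correct and follows essentially the same route as the paper's own proof: the same coinductive set $W$ with \cref{rmk:coinduction}, the same chain of equivalences assembled from (ii), (iii), the inverse of $fu$, and whiskering the witness cell (invertible by \cref{thm:pasting-invertible-cells}), followed by lifting the resulting invertible cell along $f$ via essential surjectivity and concluding membership in $W$ by \cref{cor:invariance}. The only cosmetic differences are that you invoke transitivity of $\sim$ (\cref{cor:eq-rel}) where the paper explicitly names a composite cell $p'$, and that you spell out why strict $\omega$-functors satisfy (ii) and (iii), which the paper leaves implicit.
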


\begin{proof}
    Let $f \colon X \to Y$ be a globular map between weak $\omega$-categories satisfying (i)--(iii).
    Let 
    \[
    W = \biggl\{u \in \coprod_{n\geq 1}X_n \bigg| fu\text{ is invertible in }Y\biggr\}.
    \]
    It suffices to show $W \subseteq \Phi^X(W)$ (cf.~\cref{rmk:coinduction}).
    
    Let $u \colon x \to x'$ be an $n$-cell in $W$.
    Then there exist an $n$-cell $v \colon fx' \to fx$ and invertible $(n+1)$-cells $p \colon fu\comp{n-1}{Y}v \to \id{n}{Y}{fx}$ and $q \colon v\comp{n-1}{Y}fu \to \id{n}{Y}{fx'}$ in $Y$.
    By (i), there exists $\overline v \colon x' \to x$ in $X$ such that $f \overline v \sim v$ in $Y$.
    Let $p'$ be a composite of
    \[
    f(u\comp{n-1}{X}\overline v) \sim fu\comp{n-1}{Y}f \overline v \sim fu\comp{n-1}{Y}v \xrightarrow{p} \id{n}{Y}{fx} \sim f\bigl(\id{n}{X}{x}\bigr)
    \]
    where the first and the last factors are instances of (iii) and (ii) respectively, and the second factor is invertible by \cref{thm:pasting-invertible-cells}. 
    It follows from (i) that there exists an $(n+1)$-cell $\overline p \colon u\comp{n-1}{X}\overline v \to \id{n}{X}{x}$ such that $f \overline p \sim p'$.
    Note that $f \overline p$ is invertible in $Y$ by \cref{thm:pasting-invertible-cells} and \cref{cor:invariance}, so $\overline p \in W$.
    Similarly, there is an $(n+1)$-cell $\overline q \colon \overline v\comp{n-1}{X}u \to \id{n}{X}{x'}$ in $W$ that is sent by $f$ to a composite of
    \[
    f(\overline v\comp{n-1}{X}u) \sim f \overline v\comp{n-1}{Y}fu \sim v\comp{n-1}{Y} fu \xrightarrow{q} \id{n}{Y}{fx'} \sim f\bigl(\id{n}{X}{x'}\bigr)
    \]
    modulo the equivalence relation $\sim$.
    This completes the proof.
\end{proof}

We conclude this subsection with an informal explanation of why the remaining third \ref{item:C} of the 2-out-of-3 property is much more difficult to prove than \ref{item:A} and \ref{item:B}.

\begin{remark}
The difficulty of proving \ref{item:C} lies in the fact that $\omega$-weak equivalences are defined in terms of \emph{essential} surjectivity rather than actual surjectivity (the latter gives the definition of \emph{trivial fibrations} (\cref{def:alg-triv-fib,defn:TheQ})). In particular, the $\omega$-weak equivalence $f$ may not be surjective on the nose, and one is led to consider the following sort of diagram in an attempt to prove \ref{item:C}:
\[
    \begin{tikzpicture}[baseline=-\the\dimexpr\fontdimen22\textfont2\relax ]
      \node(0a) at (0,2) {$\bullet$};
      \node(1a) at (3,2) {$\bullet$};
      \draw [->,bend left=15]  (0a) to node[near start,fill=white, labelsize] {$x$} (1a);
      \draw [->,bend right=15] (0a) to node[swap,near end,fill=white,labelsize] {$x'$} (1a);
      \node(2) at (0,-0.5) {$\bullet$};
      \node(3) at (3,-0.5) {$\bullet$};
      \node(2a) at (0,0.5) {$\bullet$};
      \node(3a) at (3,0.5) {$\bullet$};
      \draw [->,bend left=15]  (2) to node[near start,fill=white,labelsize] {$y$} (3);
      \draw [->,bend right=15] (2) to node[near end,fill=white, swap,labelsize] {$y'$} (3);
      \draw [->,bend left=15]  (2a) to node[near start,fill=white, labelsize] {$fx$} (3a);
      \draw [->,bend right=15] (2a) to node[swap,near end,fill=white,labelsize] {$fx'$} (3a);
      \draw [<-]  (2a) to node[auto,swap,labelsize] {$\sim$} (2);
      \draw [<-]  (3) to node[auto,swap,labelsize] {$\sim$} (3a);
      \node(4) at (0,-2.5) {$\bullet$};
      \node(5) at (3,-2.5) {$\bullet$};
      \node(4a) at (0,-1.5) {$\bullet$};
      \node(5a) at (3,-1.5) {$\bullet$};
      \draw [->,bend left=15]  (4) to node[near start,fill=white,labelsize] {$gy$} (5);
      \draw [->,bend right=15] (4) to node[near end,fill=white, swap,labelsize] {$gy'$} (5);
      \draw [->,bend left=15]  (4a) to node[near start,fill=white, labelsize] {$gfx$} (5a);
      \draw [->,bend right=15] (4a) to node[swap,near end,fill=white,labelsize] {$gfx'$} (5a);
      \draw [2cell]  (1.5,-2.35) to node[auto,labelsize] {$w$} (1.5,-2.65);
      \draw [<-]  (4a) to node[auto,swap,labelsize] {$\sim$} (4);
      \draw [<-]  (5) to node[auto,swap,labelsize] {$\sim$} (5a);
      \node(x) at (-1,2) {$X$};
      \node(y) at (-1,0) {$Y$};
      \node(z) at (-1,-2) {$Z$};
      \draw [->]  (x) to node[auto, swap, labelsize] {$f$} (y);
      \draw [->]  (y) to node[auto, swap, labelsize] {$g$} (z);
\end{tikzpicture}
    \]
That is, starting with a parallel pair of $(n-1)$-cells $(y,y')$ in $Y$ and an $n$-cell $w\colon gy\to gy'$ in $Z$, one would hope to use the assumption that $gf$ is an $\omega$-weak equivalence to obtain some $v\colon y\to y'$ with $gv\sim w$. However, this is not immediately possible since $(y,y')$ may not be of the form $(fx,fx')$ for any parallel pair of $(n-1)$-cells $(x,x')$ in $X$. Instead, the assumption that $f$ is an $\omega$-weak equivalence yields cylinder-like diagrams as above. 

In the case of \emph{strict} $\omega$-categories \cite{Lafont_Metayer_Worytkiewicz_folk_model_str_omega_cat}, the authors construct a globular set $\Gamma Y$ of \emph{cylinders} in $Y$ in order to prove \ref{item:C}.\footnote{They then endow it with a suitable strict $\omega$-category structure, but this is not strictly necessary for proving \ref{item:C}.}
We avoid the use of $\Gamma Y$ by essentially unravelling their \emph{transport} construction \cite[Lemma 4.22]{Lafont_Metayer_Worytkiewicz_folk_model_str_omega_cat} into a series of \emph{whiskerings} (introduced in the next subsection) and interleaving them into a suitable inductive argument.
This modification repackages the proof, but we still need similar ingredients to those used in \cite{Lafont_Metayer_Worytkiewicz_folk_model_str_omega_cat}.
In particular, we need a weak $\omega$-categorical counterpart of \cite[Lemma~4.6]{Lafont_Metayer_Worytkiewicz_folk_model_str_omega_cat}, which is \cref{lem:invertible-u-ess-surj} below.
\end{remark}

\subsection{Whiskering with an invertible cell}
\label{subsec:whiskering}
The purpose of this subsection is to establish a key intermediate result (\cref{lem:invertible-u-ess-surj}) for proving \ref{item:C}, which concerns \emph{whiskering maps} defined as follows.

\begin{definition}
    For each weak $\omega$-category $X$, object $z\in X_0$, and 1-cell $u\colon x\to y$ in $X$, the \emph{whiskering map} 
\[
    X(u,z)=u\comp{0}{X}(-)\colon X(y,z)\to X(x,z)
\]
is defined by mapping each $n$-cell $v$ of $X(y,z)$ to the $n$-cell $u\comp{0}{X}v$ of $X(x,z)$.  Dually, for each object $x\in X_0$ and 1-cell $v\colon y\to z$ in $X$, we have the whiskering map
\[
X(x,v)=(-)\comp{0}{X}v\colon X(x,y)\to X(x,z).\qedhere
\]
\end{definition}

\begin{remark}
\label{rmk:whiskering-preserves-invertible-cells}
    Let $X$ be a weak $\omega$-category, $z\in X_0$, and $u\colon x\to y$ a $1$-cell of $X$.
    Then $X(u,z)\colon X(y,z)\to X(x,z)$ is a globular map by \cref{prop:s-and-t-for-id-and-ast}, but is not necessarily a strict $\omega$-functor unless $X$ is a \emph{strict} $\omega$-category.\footnote{In fact, we suspect that $X(u,z)$ is the \emph{underlying globular map} of a weak $\omega$-functor in the sense of \cref{underlying}, but this is beyond the scope of this paper.}
    It also preserves invertible cells by \cref{thm:pasting-invertible-cells} and \cref{rmk:hom-weak-omega-cat}. 
\end{remark}

\cref{lem:invertible-u-ess-surj} below states that the whiskering map $X(u,z)\colon X(y,z)\to X(x,z)$ is essentially $\omega$-surjective whenever the 1-cell $u\colon x\to y$ is invertible in $X$.
We note that its proof constitutes the most technical part of this paper, and if the reader is willing to take this theorem for granted, they may safely skip the rest of this subsection.

The proof of 
\cref{lem:invertible-u-ess-surj}
turns out to require the following notion of \emph{essential $\omega$-injectivity}.

\begin{definition}\label{def:ess-inj}
    Let $X$ and $Y$ be weak $\omega$-categories and $f\colon X\to Y$ be a globular map between the underlying globular sets of $X$ and $Y$.
    \begin{itemize}
        \item $f$ is \emph{essentially $0$-injective} if, for each $x,x'\in X_0$, $fx\sim fx'$ implies $x\sim x'$.
        \item $f$ is \emph{essentially $(n+1)$-injective} ($n\in\N$) if $f$ is essentially $0$-injective and for each $x,x'\in X_0$, the induced globular map $f_{x,x'}\colon X(x,x')\to Y(fx,fx')$ is essentially $n$-injective.
        \item $f$ is \emph{essentially $\omega$-injective} if it is essentially $n$-injective for all $n\in\N$.\qedhere
    \end{itemize}
\end{definition}
See \cref{lem:surj-implies-inj} for a relationship between the notions of essential $n$-injectivity and essential $n$-surjectivity.
Unravelling the induction, $f$ is essentially $n$-injective ($n\geq 0$) if and only if, for any $0\leq k\leq n$ and any parallel pair $(u,v)$ of $k$-cells in $X$, $fu\sim fv$ in $Y$ implies $u\sim v$ in $X$.

Our proof of \cref{lem:invertible-u-ess-surj} proceeds in the following four steps.
\begin{itemize}
    \labeleditem{(i)}\label{step1} Prove that $X(u,z)\colon X(y,z)\to X(x,z)$ is essentially $\omega$-injective in the special case $u = \id{1}{X}{y}$.
    \labeleditem{(ii)}\label{step2} Extend \ref{step1} to the general case of an arbitrary invertible $1$-cell $u$.
    \labeleditem{(iii)}\label{step3} Prove that $X(u,z)\colon X(y,z)\to X(x,z)$ is essentially $\omega$-surjective in the special case $u = \id{1}{X}{y}$.
    \labeleditem{(iv)}\label{step4} Extend \ref{step3} to the general case of an arbitrary invertible $1$-cell $u$.
\end{itemize}

Before embarking on the actual proof, let us discuss why even \ref{step1} is non-trivial.
For example, given a parallel pair of $2$-cells
\[
\begin{tikzpicture}[baseline=-\the\dimexpr\fontdimen22\textfont2\relax ]
      \node(20) at (0,0) {$y$};
      \node(21) at (3,0) {$z$};

      \draw [->,bend left=30]  (20) to node[auto, labelsize] {$s$} (21);
      \draw [->,bend right=30] (20) to node[auto, swap,labelsize] {$t$} (21);

      \draw [2cell]  (1.2,0.3) to node[left,labelsize] {$u$} (1.2,-0.3);
      \draw [2cell]  (1.8,0.3) to node[right,labelsize] {$v$} (1.8,-0.3);
\end{tikzpicture}
\]
in a weak $\omega$-category $(X,\xi)$ with $\id{1}{}{y} \comp{0}{}u \sim \id{1}{}{y} \comp{0}{}v$, one must prove $u \sim v$.
One may be tempted to resort to the ``unit law'' and argue
\[
u \sim \id{1}{}{y} \comp{0}{}u \sim \id{1}{}{y} \comp{0}{}v \sim v,
\]
but in general $u \colon s \to t$ and $\id{1}{}{y} \comp{0}{}u \colon \id{1}{}{y} \comp{0}{}s \to \id{1}{}{y} \comp{0}{}t$ are not even parallel, so ``$u \sim \id{1}{}{y} \comp{0}{}u$'' does not make sense.
(Note that \cref{lem:unit-law} is not applicable here.)

A key observation here is that both $u$ and $\id{1}{}{y}\comp{0}{}u$ can be obtained from the same pasting diagram $[u]\in (TX)_2$ by pasting according to different pasting instructions, 
namely $\chi=\widetilde e_2\in (L1)_2$ and $\phi=\id{1}{L1}{\widetilde e_0} \comp{0}{L1}\widetilde e_2\in (L1)_2$, respectively. (See the proof of \cref{lem:i-ess-inj}.)
The following construction, which transforms a cell of the form
\[
w \colon \xi(\phi,\uu) \to \xi(\phi,\vv)
\]
(such as an invertible $3$-cell witnessing $\id{1}{}{y} \comp{0}{}u \sim \id{1}{}{y} \comp{0}{}v$) into one of the form
\[
w' \colon \xi(\chi,\uu) \to \xi(\chi,\vv)
\]
(such as an invertible $3$-cell witnessing $u \sim v$)
by inserting ``pads'' in each dimension, is designed to deal with this kind of situation.
Note that if $\phi$ and $\chi$ were parallel cells in $L1$, this could be achieved by means of suitable coherence cells (see \cref{cor:coherence}); $\id{1}{L1}{\widetilde e_0} \comp{0}{L1}\widetilde e_2$ and $\widetilde e_2$ are not parallel, however.

\begin{construction}\label{con:padding}
    Let $n \ge 1$ and suppose we are given $\phi,\chi \in (L1)_n$ such that $\ar(\phi) = \ar(\chi) = \kk$.
    Note that $\phi$ and $\chi$ are not necessarily parallel.
    Let $\phi_0 = \phi$. 
    We will construct a sequence of $n$-cells $\phi_1,\dots,\phi_n$ in $L1$ where each $\phi_i$ is to be thought of as ``$\phi$ after inserting pads of dimension $\le i$ so that it has the $(i-1)$-boundary of $\chi$.''
    The following diagram describes $\phi_2$ with $\phi,\chi\in(L1)_2$.
    \begin{equation}\label{eqn:padding}
\begin{tikzpicture}[baseline=-\the\dimexpr\fontdimen22\textfont2\relax ]
      \node(0) at (0,0) {$s_0^{L1}(\chi)$};
      \node(1) at (2,0) {$s_0^{L1}(\phi)$};
      \node(2) at (4,0) {$t_0^{L1}(\phi)$};
      \node(3) at (6,0) {$t_0^{L1}(\chi)$};
      \draw [->]  (0) to node[auto, labelsize] {$\lambda_1$}   (1); 
      \draw [->,bend left=50]  (1) to node[auto, labelsize] {$s_1^{L1}(\phi)$}   (2);      
      \draw [->,bend right=50] (1) to node[auto, swap,labelsize] {$t_1^{L1}(\phi)$}  (2); 
      \draw [->,bend left=60]  (0) to node[auto, labelsize] {$s_1^{L1}(\chi)$}   (3);   
      \draw [->,bend right=60]  (0) to node[auto, swap,labelsize] {$t_1^{L1}(\chi)$}   (3); 
      \draw [->]  (2) to node[auto, labelsize] {$\rho_1$}   (3); 
      \draw [->,2cell] (3,1.5) to node[auto,labelsize] {$\lambda_2$} (3,1.2);
      \draw [->,2cell] (3,0.3) to node[auto,labelsize] {$\phi$} (3,-0.3);
      \draw [->,2cell] (3,-1.2) to node[auto,labelsize] {$\rho_2$} (3,-1.5);
\end{tikzpicture}
\end{equation}
    
    For $1 \le i \le n$, we set:
    \begin{itemize}
        \item $\lambda_i = \kappa\bigl(\langle s^{L1}_{i-1}(\chi),s^{L1}_{i-1}(\phi_{i-1})\rangle,\ \id{i}{T1}{s^{T1}_{i-1}(\kk)}\bigr)\in (L1)_i$,
        \item $\rho_i = \kappa\bigl(\langle t^{L1}_{i-1}(\phi_{i-1}),t^{L1}_{i-1}(\chi)\rangle,\ \id{i}{T1}{t^{T1}_{i-1}(\kk)}\bigr)\in (L1)_i$, and
        \item $\phi_i = \lambda_i\comp{i-1}{L1}(\phi_{i-1}\comp{i-1}{L1}\rho_i)\in(L1)_n$.
    \end{itemize}
    To see that these expressions indeed define valid cells in $L1$, we must check
    \begin{itemize}
        \item $\ar(\phi_{i}) = \kk$,
        \item $s_{i-1}^{L1}(\phi_{i}) = s_{i-1}^{L1}(\chi)$, and
        \item $t_{i-1}^{L1}(\phi_{i}) = t_{i-1}^{L1}(\chi)$. 
    \end{itemize}
    The first condition holds because $\ar \colon L1 \to T1$, being a strict $\omega$-functor between the weak $\omega$-categories $(L1,\mu^L_1)$ and $(T1,\mu^T_1\circ \ar_{T1})$ (the latter is induced by the strict $\omega$-category $(T1,\mu^T_1)$), preserves $\comp{i-1}{}$ and sends $\lambda_i,\rho_i$ to identities.
    The second and third hold by construction of $\phi_{i}$ and \cref{prop:s-and-t-for-id-and-ast}.
    Note that, in particular, $\phi_{n}$ and $\chi$ are parallel $n$-cells in $L1$ with the same arity, so the above formulas for $\lambda_i$ and $\rho_i$ make sense for $i=n+1$ too: 
    \begin{itemize}
        \item $\lambda_{n+1} = \kappa\bigl(\langle\chi,\phi_{n}\rangle,\id{n+1}{T1}{\kk}\bigr)\in(L1)_{n+1}$ and
        \item $\rho_{n+1} = \kappa\bigl(\langle \phi_{n},\chi\rangle,\id{n+1}{T1}{\kk}\bigr)\in(L1)_{n+1}$.
    \end{itemize}

    Now suppose further that we are given a weak $\omega$-category $(X,\xi)$, a parallel pair of pasting diagrams $\uu, \vv \in (TX)_n$ of shape $\kk$, and an $(n+1)$-cell $w \colon\xi(\phi,\uu) \to \xi(\phi,\vv)$.
    Let $w_0 = w$.
    For $1 \le i \le n+1$, we set
    \begin{itemize}
        \item $\ell_i = \xi\bigl(\lambda_i,\id{i}{TX}{s_{i-1}^{TX}(\uu)}\bigr)\in X_i$,
        \item $r_i = \xi\bigl(\rho_i,\id{i}{TX}{t_{i-1}^{TX}(\vv)}\bigr)\in X_i$, and
        \item $w_i =\ell_i\comp{i-1}{X} (w_{i-1}\comp{i-1}{X}r_i)\in X_{n+1}$.
    \end{itemize}
    (When $i=n+1$, we interpret the expressions $s_{n}^{TX}(\uu)$ and $t_{n}^{TX}(\vv)$ in the definitions of $\ell_{n+1}$ and $r_{n+1}$ as $\uu$ and $\vv$, respectively.) We may inductively prove 
    that the expression in the definition of $w_i$ is well typed
    by checking $s_n^X(w_i) = \xi(\phi_i,\uu)$ and $t_n^X(w_i) = \xi(\phi_i,\vv)$ for $1 \le i \le n$.
    We only spell out the former:
    \begin{align*}
        s_n^X(w_i) &= s_n^X\bigl(\ell_i\comp{i-1}{X}(w_{i-1}\comp{i-1}{X}r_i )\bigr)
        \tag*{\text{(definition of $w_i$)}}\\
        &= \ell_i\comp{i-1}{X}\bigl(s_n^X(w_{i-1})\comp{i-1}{X}r_i\bigr)
        \tag*{\text{(\cref{prop:s-and-t-for-id-and-ast})}}\\
        &= \xi\bigl(\lambda_i,\id{i}{TX}{s_{i-1}^{TX}(\uu)}\bigr)\comp{i-1}{X}\Bigl(\xi(\phi_{i-1},\uu)\comp{i-1}{X}\xi\bigl(\rho_i,\id{i}{TX}{t_{i-1}^{TX}(\vv)}\bigr)\Bigr)
        \tag*{\text{(definitions of $\ell_i$ and $r_i$, and inductive hypothesis)}}\\
        &= \xi\bigl(\lambda_i,\id{i}{TX}{s_{i-1}^{TX}(\uu)}\bigr)\comp{i-1}{X}\Bigl(\xi(\phi_{i-1},\uu)\comp{i-1}{X}\xi\bigl(\rho_i,\id{i}{TX}{t_{i-1}^{TX}(\uu)}\bigr)\Bigr)
        \tag*{\text{($\uu$ and $\vv$ are parallel)}}\\
        &= \xi\Bigl(\bigl(\lambda_i,\id{i}{TX}{s_{i-1}^{TX}(\uu)}\bigr)\comp{i-1}{LX} \Bigl((\phi_{i-1},\uu)\comp{i-1}{LX}\bigl(\rho_i,\id{i}{TX}{t_{i-1}^{TX}(\uu)}\bigr)\Bigr)\Bigr)
        \tag*{\text{($\xi\colon (LX,\mu^L_X)\to (X,\xi)$ is a strict $\omega$-functor)}}\\
        &= \xi\bigl(\lambda_i\comp{i-1}{L1}(\phi_{i-1}\comp{i-1}{L1}\rho_i), \uu\bigr)
        \tag*{\text{(action of $\comp{LX}{i-1}$ is determined by $\comp{L1}{i-1}$ and $\comp{TX}{i-1}$; see \cite[Sections~2.2 and 2.5]{FHM1})}}\\
        &= \xi(\phi_i,\uu).
        \tag*{\text{(definition of $\phi_i$)}}
    \end{align*}
    We call the resulting $(n+1)$-cell $w_{n+1} \colon \xi(\chi,\uu) \to \xi(\chi,\vv)$ the \emph{$\phi$-to-$\chi$ padding of $w$} 
    (with respect to $(\uu,\vv)$).
    Note that all the pads ($\ell_i$ and $r_i$)
    are coherence cells (\cref{cor:coherence}), and hence
    are invertible in $X$.
    This in particular implies that, if $w$ is invertible then so is its $\phi$-to-$\chi$ padding (see \cref{thm:pasting-invertible-cells}).
\end{construction}
 
Using this padding construction, \ref{step1} is straightforward.

\begin{lemma}
\label{lem:i-ess-inj}
    Let $(X,\xi)$ be a weak $\omega$-category and $y,z \in X_0$.
    Then the whiskering map
    \[
    X\bigl(\id{1}{X}{y},z\bigr) \colon X(y,z) \to X(y,z)
    \]
    is essentially $\omega$-injective.
\end{lemma}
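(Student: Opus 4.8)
The plan is to verify essential $\omega$-injectivity of $W = X\bigl(\id{1}{X}{y},z\bigr)$ directly from its explicit unpacking (as recorded just after \cref{def:ess-surj-ess-inj}): for every $k\ge 0$ and every parallel pair $(u,v)$ of $k$-cells in $X(y,z)$, I must show that $\id{1}{X}{y}\comp{0}{X}u \sim \id{1}{X}{y}\comp{0}{X}v$ forces $u\sim v$, where by \cref{rmk:hom-weak-omega-cat} and \cref{cor:composition-in-hom} the relation $\sim$ may be read in $X$ throughout. The base case $k=0$ is immediate: there $u,v$ are $1$-cells $y\to z$, the unit law (\cref{lem:unit-law}) supplies invertible $2$-cells $\id{1}{X}{y}\comp{0}{X}u \sim u$ and $\id{1}{X}{y}\comp{0}{X}v \sim v$, and transitivity of $\sim$ (\cref{cor:eq-rel}) deduces $u\sim v$ from the hypothesis.

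The difficulty, as flagged in the Introduction, is that for $k\ge 1$ the whiskered cell $\id{1}{X}{y}\comp{0}{X}u$ is no longer parallel to $u$: by \cref{prop:s-and-t-for-id-and-ast}(2)(d) its intermediate sources and targets differ from those of $u$ by further whiskerings $\id{1}{X}{y}\comp{0}{X}(-)$, so a naive second application of the unit law is not even type-correct, and $\sim$ cannot be invoked between the two. This is precisely the mismatch that the padding construction (\cref{con:padding}) is built to repair. Writing $\id{1}{X}{y}\comp{0}{X}u = \xi(\phi,\uu)$ and $\id{1}{X}{y}\comp{0}{X}v = \xi(\phi,\vv)$ with $\phi = \spi(\kk)$ for the whiskering arity $\kk$ and with parallel pasting diagrams $\uu,\vv$, I would feed an invertible witness $w\colon \xi(\phi,\uu)\to\xi(\phi,\vv)$ into the padding machinery and combine its output with the higher unit laws and the coherence cells produced by \cref{cor:coherence}, so as to realign the displaced boundaries one dimension at a time. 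All pads inserted by the construction are invertible (as recorded at the close of \cref{con:padding}), pasting preserves invertibility by \cref{thm:pasting-invertible-cells}, and \cref{cor:invariance} lets me trade cells for $\sim$-equivalent ones without losing invertibility; the aim is to assemble from these ingredients an invertible cell that is genuinely parallel to a candidate $u\to v$.

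I expect the main obstacle to be exactly this uniform control of the non-parallelism across all dimensions simultaneously: turning the dimension-$0$ unit coherence $\id{1}{X}{y}\comp{0}{X}(-)\sim(-)$ into a coherent family of comparisons in every higher dimension, and checking that the padded cell has the correct source and target to witness $u\sim v$. I would organise the argument as an induction on $k$ (equivalently, coinductively via the hom weak $\omega$-categories, passing from $X(y,z)$ to $X(y,z)(a,b)$ through \cref{rmk:hom-weak-omega-cat} and \cref{cor:composition-in-hom}), so that the boundary-realignment at level $k$ is reduced to lower-dimensional instances already handled, with the unit law serving as the base of the recursion and the padding construction bridging the gap at each step.
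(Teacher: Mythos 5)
Your proposal correctly isolates the difficulty (the whiskered cell is not parallel to the original one) and reaches for the right tool (\cref{con:padding}), and your base case via \cref{lem:unit-law} is fine. But the way you set up the padding cannot close the argument. You write $\id{1}{X}{y}\comp{0}{X}u=\xi(\phi,\uu)$ with $\phi=\spi(\kk)$ for the \emph{whiskering arity} $\kk=\begin{bmatrix}1 & & n\\ & 0 & \end{bmatrix}$, so that $\uu$ is the two-cell pasting diagram containing $\id{1}{X}{y}$ and $u$. The padding construction only ever changes the pasting \emph{instruction}; it never changes the underlying pasting diagram. Hence every cell it can produce from your $w$ is of the form $\xi(\chi,\uu)\to\xi(\chi,\vv)$ with $\ar(\chi)=\kk$, i.e.\ still a composite of the diagram containing $\id{1}{X}{y}$ --- never a cell $u\to v$. (No instruction of arity $\kk$ evaluates to $u$ on the nose: under $\ar$ every such instruction acts on strict $\omega$-categories as the strict composite of the whole diagram.) Your remaining plan --- ``combine with higher unit laws and coherence, realigning one dimension at a time'' --- then re-encounters exactly the non-parallelism problem the lemma is about, since \cref{lem:unit-law} only governs codimension-one composition and \cref{cor:coherence} only compares \emph{parallel} instructions.

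The missing idea is an absorption step: because $\id{1}{X}{y}=\xi\bigl(\id{1}{L1}{\widetilde e_0},[y]\bigr)$ and $y=s_0^X(u)$, the algebra axioms give
\[
\id{1}{X}{y}\comp{0}{X}u \;=\; \xi\bigl(\id{1}{L1}{\widetilde{e}_0}\comp{0}{L1}\widetilde e_n,\,[u]\bigr),
\]
so the whiskered cell is an operation applied to the \emph{singleton} diagram $[u]$, with instruction $\phi'=\id{1}{L1}{\widetilde{e}_0}\comp{0}{L1}\widetilde e_n$ whose arity is $[n]$ (by the strict unit law in $T1$) --- the same arity as $\widetilde e_n$, and $\xi(\widetilde e_n,[u])=u$. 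With this observation the paper's proof is a single application of \cref{con:padding}: the $\phi'$-to-$\widetilde e_n$ padding of $w$ is already an invertible cell $u\to v$, uniformly in the dimension, with no induction and no appeal to unit laws or coherence. Your proposed induction on $k$ through hom weak $\omega$-categories also does not set up: the restriction of $X\bigl(\id{1}{X}{y},z\bigr)$ to a map $X(y,z)(a,b)\to X(y,z)\bigl(\id{1}{X}{y}\comp{0}{X}a,\id{1}{X}{y}\comp{0}{X}b\bigr)$ is \emph{not} an identity-whiskering map of the hom weak $\omega$-category, so the inductive hypothesis never applies to it. In the paper the roles are the other way around: \cref{lem:i-ess-inj} is proved directly (no induction) and is then consumed as a base ingredient in the genuinely inductive proof of the general whiskering statement \cref{lem:whiskering-u-ess-inj}.
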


\begin{proof}
    Let $n\geq 1$ and $u,v \in X_n$ be parallel $n$-cells with $s_0^X(u) = s_0^X(v) = y$ and $t_0^X(u) = t_0^X(v) = z$.
    Suppose that there exists an invertible $(n+1)$-cell $w \colon \id{1}{X}{y}\comp{0}{X}u \to \id{1}{X}{y}\comp{0}{X}v$ in $X$.
    Observe that we have
    \[
    \id{1}{X}{y}\comp{0}{X}u = \xi\bigl(\id{1}{L1}{\widetilde{e}_0}\comp{0}{L1}\widetilde e_n, [u] \bigr)
    \quad\text{and}\quad
    \id{1}{X}{y}\comp{0}{X}v = \xi\bigl(\id{1}{L1}{\widetilde{e}_0}\comp{0}{L1}\widetilde e_n, [v] \bigr).
    \]
    For example, the former can be proved as follows: 
    \begin{align*}
        \id{1}{X}{y}\comp{0}{X}u 
        &= \xi\bigl(\id{1}{L1}{\widetilde e_0}, \id{1}{TX}{[y]}\bigr)\comp{0}{X}\xi(\widetilde e_n,[u])\\
        &= \xi\Bigl(\bigl(\id{1}{L1}{\widetilde e_0}, \id{1}{TX}{[y]}\bigr)\comp{0}{LX}(\widetilde e_n,[u])\Bigr)
        \tag*{\text{($\xi\colon (LX,\mu^L_X)\to (X,\xi)$ is a strict $\omega$-functor)}}\\
        &= \xi\bigl(\id{1}{L1}{\widetilde{e}_0}\comp{0}{L1}\widetilde e_n, [u] \bigr).
        \tag*{\text{(definitions of $\mu^L_X$ and $\mu^T_X$; see \cite[Sections~2.2 and 2.5]{FHM1})}}
    \end{align*}
    (We shall omit similar verifications in what follows.)
    Thus the $\bigl(\id{1}{L1}{\widetilde{e}_0}\comp{0}{L1}\widetilde e_n\bigr)$-to-$\widetilde e_n$ padding of $w$ witnesses $u \sim v$.
\end{proof}

The following lemma, which corresponds to \ref{step2}, generalises the ``weak uniqueness'' part of \cite[Lemma~4.6]{Lafont_Metayer_Worytkiewicz_folk_model_str_omega_cat} from strict $\omega$-categories to weak $\omega$-categories.
\begin{lemma}
\label{lem:whiskering-u-ess-inj}
    Let $X$ be a weak $\omega$-category, $x,y,z \in X_0$, and $u \colon x \to y$ an invertible $1$-cell.
    Then the whiskering map
    \[
    X(u,z) \colon X(y,z) \to X(x,z)
    \]
    is essentially $\omega$-injective.
\end{lemma}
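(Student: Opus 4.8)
The plan is to reduce the statement to the identity case already established in \cref{lem:i-ess-inj}, using an inverse of $u$. Fix an inverse $\check u \colon y \to x$ of $u$ together with an invertible $2$-cell $q \colon \check u \comp{0}{X} u \to \id{1}{}{y}$ supplied by \cref{def:invertible}. Let $a,b$ be a parallel pair of cells in $X(y,z)$ and suppose $u \comp{0}{X} a \sim u \comp{0}{X} b$; I must produce an invertible cell witnessing $a \sim b$.

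First I would whisker the given witness by $\check u$. Since the whiskering map $X(\check u, z)$ preserves invertible cells (\cref{rmk:whiskering-preserves-invertible-cells}), this gives $\check u \comp{0}{X}(u \comp{0}{X} a) \sim \check u \comp{0}{X}(u \comp{0}{X} b)$. The two bracketings $\check u \comp{0}{X}(u \comp{0}{X} a)$ and $(\check u \comp{0}{X} u) \comp{0}{X} a$ are the values of one and the same pasting diagram (with constituents $\check u$, $u$, $a$ composed along their $0$-boundaries) under two parallel pasting instructions of equal arity, so \cref{cor:coherence} identifies them up to $\sim$, and likewise for $b$. Transitivity of $\sim$ (\cref{cor:eq-rel}) then yields $(\check u \comp{0}{X} u) \comp{0}{X} a \sim (\check u \comp{0}{X} u) \comp{0}{X} b$. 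The goal now is to transport this equivalence across $q$ to obtain $\id{1}{}{y} \comp{0}{X} a \sim \id{1}{}{y} \comp{0}{X} b$, after which \cref{lem:i-ess-inj} immediately delivers $a \sim b$.

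The hard part will be exactly this transport along $q$. For $a$ of dimension $\ge 2$ the cells $(\check u \comp{0}{X} u) \comp{0}{X} a$ and $\id{1}{}{y} \comp{0}{X} a$ are not parallel, so one cannot relate them by a single invertible cell, and --- crucially --- the padding construction alone cannot bridge them either: padding changes only the pasting instruction while fixing the underlying pasting diagram. This is precisely where the present situation outstrips \cref{lem:i-ess-inj}: there, whiskering by $\id{1}{}{y}$ leaves the underlying diagram untouched, because under $\ar$ the identity becomes a strict identity in $T1$ and is thereby absorbed into the pasting instruction, so that a single $\phi$-to-$\chi$ padding suffices. Here, by contrast, $\check u \comp{0}{X} u$ is a genuine $1$-cell contributing a constituent to the diagram, and replacing it by $\id{1}{}{y}$ genuinely alters the diagram from the three-constituent shape to the single-constituent shape $[a]$.

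To carry out the transport I would use the invertible $2$-cell $q$ to substitute $\id{1}{}{y}$ for $\check u \comp{0}{X} u$ inside the pasting diagram, turning an invertible cell built over the three-constituent diagram into one built over $[a]$. Concretely, one pastes $q$ against $a$ and then repairs the resulting non-parallelism dimension by dimension by inserting pads as in \cref{con:padding}, tracking invertibility throughout via \cref{thm:pasting-invertible-cells}; a naturality property of the padding construction with respect to such substitutions is what guarantees that the padded cell has source and target $\id{1}{}{y} \comp{0}{X} a$ and $\id{1}{}{y} \comp{0}{X} b$. This substitution-through-padding is the technical heart of the argument, and it is the reason the padding construction was isolated as a standalone tool.
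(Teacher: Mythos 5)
Your overall skeleton---whisker by $\check u$, rebracket, use $q$ to replace $\check u\comp{0}{X}u$ by $\id{1}{X}{y}$, then finish with \cref{lem:i-ess-inj}---is the same as the paper's, but two steps do not go through as written, and the second failure hides the main idea of the paper's proof. First, the rebracketing: you invoke \cref{cor:coherence} to get $\check u\comp{0}{X}(u\comp{0}{X}a)\sim(\check u\comp{0}{X}u)\comp{0}{X}a$, but coherence requires the two pasting instructions to be \emph{parallel} cells of $L1$, and $\widetilde e_1\comp{0}{L1}(\widetilde e_1\comp{0}{L1}\widetilde e_m)$ and $(\widetilde e_1\comp{0}{L1}\widetilde e_1)\comp{0}{L1}\widetilde e_m$ are not parallel once $m\geq 2$: their $(m-1)$-dimensional sources are the two distinct rebracketed instructions one dimension down. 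Correspondingly the two composites in $X$ are not parallel cells, so the relation $\sim$ between them is not even well formed. This is exactly the non-parallelism phenomenon you correctly flag two paragraphs later, and it already bites here. The correct tool is not coherence-plus-transitivity applied to the endpoints but padding applied to the connecting cell: since the two bracketings are values of the \emph{same} pasting diagram under different instructions, \cref{con:padding} transports the invertible cell $\check u\comp{0}{X}w$ into an invertible cell $(\check u\comp{0}{X}u)\comp{0}{X}a\to(\check u\comp{0}{X}u)\comp{0}{X}b$, which is what the paper does.

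The deeper gap is the ``transport along $q$.'' As you yourself observe, padding fixes the underlying pasting diagram and only changes the instruction; since $q$ is a genuine constituent of the relevant diagram, no padding can output a cell whose source and target are $\id{1}{X}{y}\comp{0}{X}a$ and $\id{1}{X}{y}\comp{0}{X}b$, so your final claim contradicts your own (correct) earlier remark. What one actually gets, after composing with $q\comp{0}{X}t_1^X(a)$ and padding across the interchange of instructions, is an invertible cell
\[
\bigl(q\comp{0}{X}s_1^X(a)\bigr)\comp{1}{X}\bigl(\id{1}{X}{y}\comp{0}{X}a\bigr)\ \longrightarrow\ \bigl(q\comp{0}{X}s_1^X(b)\bigr)\comp{1}{X}\bigl(\id{1}{X}{y}\comp{0}{X}b\bigr),
\]
in which $q$ still appears. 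To strip off the factor $\bigl(q\comp{0}{X}s_1^X(a)\bigr)\comp{1}{X}(-)$ one needs essential injectivity of whiskering by the invertible $1$-cell $q\comp{0}{X}s_1^X(a)$ \emph{in the hom weak $\omega$-category} $X(y,z)$---that is, an instance of the very lemma being proved, one dimension down. This is precisely why the paper proves the statement ``$X(u,z)$ is essentially $n$-injective'' by induction on $n$, quantified over all weak $\omega$-categories: the inductive hypothesis (applied in $X(y,z)$) cancels the $q$-factor, and only then does \cref{lem:i-ess-inj} yield $a\sim b$. Your proposal has no inductive structure, and without it the argument cannot close; no ``naturality of padding'' (\cref{lem:padding-a-pasting} included) can remove a constituent from a pasting diagram.
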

\begin{remark}
    We will prove, by induction on $n$, that $X(u,z)$ is essentially $n$-injective.
    Before presenting the general proof, let us illustrate how the inductive step proceeds by considering the lowest-dimensional case.
    
    Suppose that we are given a parallel pair $(v_1,v_2)$ of $1$-cells in $X(y,z)$ which are $\sim$-related after whiskering with $u$.
    We may visualise the situation as:
    \[
    \begin{tikzpicture}[baseline = -2]
        \node (2) at (0,0) {$x$};
        \node (3) at (2,0) {$y$};
        \node (4) at (4,0) {$z$};

        \draw[->] (2) to node[auto,labelsize] {$u$} (3);
        \draw[->, bend left = 50] (3) to node[auto,labelsize] {$s$} (4);
        \draw[->, bend right = 50] (3) to node[auto,swap,labelsize] {$t$} (4);

        \draw[->,2cell] (3,0.3) to node[auto,labelsize] {$v_1$} (3,-0.3);
    \end{tikzpicture}
    \qquad\sim\qquad
    \begin{tikzpicture}[baseline = -2]
        \node (2) at (0,0) {$x$};
        \node (3) at (2,0) {$y$};
        \node (4) at (4,0) {$z$};

        \draw[->] (2) to node[auto,labelsize] {$u$} (3);
        \draw[->, bend left = 50] (3) to node[auto,labelsize] {$s$} (4);
        \draw[->, bend right = 50] (3) to node[auto,swap,labelsize] {$t$} (4);

        \draw[->,2cell] (3,0.3) to node[auto,labelsize] {$v_2$} (3,-0.3);
    \end{tikzpicture}
    \]
    Now we can whisker both sides with an inverse $\check u$ of $u$, and then with one of the witness invertible $2$-cells $q\colon \check u\comp{0}{X}u\to\id{1}{X}{y}$, and obtain:
    \[
    \begin{tikzpicture}[baseline = -36]
        \node (1) at (-2,0) {$y$};
        \node (2) at (0,0) {$x$};
        \node (3) at (2,0) {$y$};
        \node (4) at (4,0) {$z$};
        
        \draw[->] (1) to node[auto,labelsize] {$\check u$} (2);
        \draw[->] (2) to node[auto,labelsize] {$u$} (3);
        \draw[->, bend left = 50] (3) to node[auto,labelsize] {$s$} (4);
        \draw[->, bend right = 50] (3) to node[auto,swap,labelsize] {$t$} (4);

        \draw[->,2cell] (3,0.3) to node[auto,labelsize] {$v_1$} (3,-0.3);

        \node at (1,-1) {$\comp{1}{X}$};

        \node (5) at (-2,-2) {$y$};
        \node (6) at (0,-2) {$x$};
        \node (7) at (2,-2) {$y$};
        \node (8) at (4,-2) {$z$};
        
        \draw[->] (5) to node[auto,labelsize] {$\check u$} (6);
        \draw[->] (6) to node[auto,labelsize] {$u$} (7);
        \draw[->] (7) to node[auto,labelsize] {$t$} (8);
        \draw[->, bend right = 50] (5) to node[auto,labelsize,swap] {$\id{1}{X}{y}$} (7);
        \draw[->,2cell] (0,-2.3) to node[auto,labelsize] {$q$} (0,-2.9);
    \end{tikzpicture}
    \qquad\sim\qquad
    \begin{tikzpicture}[baseline = -36]
        \node (1) at (-2,0) {$y$};
        \node (2) at (0,0) {$x$};
        \node (3) at (2,0) {$y$};
        \node (4) at (4,0) {$z$};
        
        \draw[->] (1) to node[auto,labelsize] {$\check u$} (2);
        \draw[->] (2) to node[auto,labelsize] {$u$} (3);
        \draw[->, bend left = 50] (3) to node[auto,labelsize] {$s$} (4);
        \draw[->, bend right = 50] (3) to node[auto,swap,labelsize] {$t$} (4);

        \draw[->,2cell] (3,0.3) to node[auto,labelsize] {$v_2$} (3,-0.3);

        \node at (1,-1) {$\comp{1}{X}$};

        \node (5) at (-2,-2) {$y$};
        \node (6) at (0,-2) {$x$};
        \node (7) at (2,-2) {$y$};
        \node (8) at (4,-2) {$z$};
        
        \draw[->] (5) to node[auto,labelsize] {$\check u$} (6);
        \draw[->] (6) to node[auto,labelsize] {$u$} (7);
        \draw[->] (7) to node[auto,labelsize] {$t$}  (8);
        \draw[->, bend right = 50] (5) to node[auto,labelsize,swap] {$\id{1}{X}{y}$} (7);
        \draw[->,2cell] (0,-2.3) to node[auto,labelsize] {$q$} (0,-2.9);
    \end{tikzpicture}
    \]
    By padding, we can turn this invertible $3$-cell in $X$ into one of type:
    \[
    \begin{tikzpicture}[baseline = 30]
        \node (1) at (-2,0) {$y$};
        \node (3) at (2,0) {$y$};
        \node (4) at (4,0) {$z$};
        
        \draw[->] (1) to node[auto,labelsize] {$\id{1}{X}{y}$}(3);
        \draw[->, bend left = 50] (3) to node[auto,labelsize] {$s$} (4);
        \draw[->, bend right = 50] (3) to node[auto,swap,labelsize] {$t$} (4);

        \draw[->,2cell] (3,0.3) to node[auto,labelsize] {$v_1$} (3,-0.3);

        \node at (1,1) {$\comp{1}{X}$};

        \node (5) at (-2,2) {$y$};
        \node (6) at (0,3.2) {$x$};
        \node (7) at (2,2) {$y$};
        \node (8) at (4,2) {$z$};
        
        \draw[->] (5) to node[auto,labelsize] {$\check u$} (6);
        \draw[->] (6) to node[auto,labelsize] {$u$} (7);
        \draw[->] (7) to node[auto,labelsize] {$s$} (8);
        \draw[->] (5) to node[auto,labelsize,swap] {$\id{1}{X}{y}$} (7);
        \draw[->,2cell] (0,2.8) to node[auto,labelsize] {$q$} (0,2.2);
    \end{tikzpicture}
    \qquad\sim\qquad
    \begin{tikzpicture}[baseline = 30]
        \node (1) at (-2,0) {$y$};
        \node (3) at (2,0) {$y$};
        \node (4) at (4,0) {$z$};
        
        \draw[->] (1) to node[auto,labelsize] {$\id{1}{X}{y}$}(3);
        \draw[->, bend left = 50] (3) to node[auto,labelsize] {$s$} (4);
        \draw[->, bend right = 50] (3) to node[auto,swap,labelsize] {$t$} (4);

        \draw[->,2cell] (3,0.3) to node[auto,labelsize] {$v_2$} (3,-0.3);

        \node at (1,1) {$\comp{1}{X}$};

        \node (5) at (-2,2) {$y$};
        \node (6) at (0,3.2) {$x$};
        \node (7) at (2,2) {$y$};
        \node (8) at (4,2) {$z$};
        
        \draw[->] (5) to node[auto,labelsize] {$\check u$} (6);
        \draw[->] (6) to node[auto,labelsize] {$u$} (7);
        \draw[->] (7) to node[auto,labelsize] {$s$} (8);
        \draw[->] (5) to node[auto,labelsize,swap] {$\id{1}{X}{y}$} (7);
        \draw[->,2cell] (0,2.8) to node[auto,labelsize] {$q$} (0,2.2);
    \end{tikzpicture}
    \]
    The common upper cell in both sides may be regarded as an invertible $1$-cell in $X(y,z)$, so the essential $0$-injectivity of whiskering with this cell (together with \cref{lem:i-ess-inj}) implies $v_1 \sim v_2$.
\end{remark}
\begin{proof}[Proof of \cref{lem:whiskering-u-ess-inj}]
    We prove the statement 
\begin{equation}
\label{eqn:IH-inj}
    \parbox{\dimexpr\linewidth-5em}{for any weak $\omega$-category $(X,\xi)$, any $x,y,z \in X_0$, and any invertible $1$-cell $u \colon x \to y$ in $X$, the whiskering map $X(u,z) \colon X(y,z) \to X(x,z)$ is essentially $n$-injective}
\end{equation}
    by induction on $n\in\N$ (note the universal quantification over $(X,\xi)$).

    For the base case $n=0$, let $(X,\xi),x,y,z$, and $u$ be as in \cref{eqn:IH-inj}, and let $\check u\colon y\to x$ be an inverse of $u$. Take any $v_1,v_2\in \bigl(X(y,z)\bigr)_0$ with $u\comp{0}{X}v_1 \sim u\comp{0}{X}v_2$.
    Since we have
    \[
    v_1 \sim \id{1}{X}{y}\comp{0}{X}v_1 \sim (\check u\comp{0}{X} u)\comp{0}{X}v_1 \sim \check u\comp{0}{X} (u\comp{0}{X}v_1) 
    \]
    by the unit law (\cref{lem:unit-law}),
    and similarly $v_2 \sim \check u \comp{0}{X}(u\comp{0}{X}v_2)$, we can deduce $v_1 \sim v_2$.

    For the inductive step, let $n \ge 1$ and let $(X,\xi),x,y,z$, and $u$ be as in \cref{eqn:IH-inj}, and let $\check u\colon y\to x$ be an inverse of $u$, with invertible 2-cell $q\colon \check u\comp{0}{X}u\to\id{1}{X}{y}$.     
    Let $v_1,v_2 \in \bigl(X(y,z)\bigr)_{n}$ be parallel $n$-cells.
    Suppose that there exists an invertible $(n+1)$-cell
    \[
    w \colon u\comp{0}{X}v_1 \to u\comp{0}{X}v_2
    \]
    in $X(x,z)$.
    Then we can whisker it with $\check u$ and obtain an invertible $(n+1)$-cell
    \[
    \check u\comp{0}{X}w \colon \check u\comp{0}{X}(u\comp{0}{X}v_1) \to \check u\comp{0}{X}(u\comp{0}{X}v_2)
    \]
    in $X(y,z)$.
    Let $\phi = \widetilde e_1\comp{0}{L1}(\widetilde e_1\comp{0}{L1} \widetilde e_{n+1})\in(L1)_{n+1}$ and $\chi = (\widetilde e_1\comp{0}{L1} \widetilde e_1)\comp{0}{L1} \widetilde e_{n+1}\in(L1)_{n+1}$.
    Then the above cell $\check u\comp{0}{X}w$ is of type
    \[
    \xi\left(\phi, \begin{bmatrix}
        \check u & & u & & v_1\\
        & x & & y &
    \end{bmatrix}\right)
    \quad \to \quad 
    \xi\left(\phi, \begin{bmatrix}
        \check u & & u & & v_2\\
        & x & & y &
    \end{bmatrix}\right)
    \]
    (note that $v_1$ and $v_2$ are $(n+1)$-cells of $X$).
    Hence its $\phi$-to-$\chi$ padding is an invertible $(n+2)$-cell
    \[
    w'\colon (\check u\comp{0}{X}u)\comp{0}{X}v_1 \to (\check u\comp{0}{X}u)\comp{0}{X}v_2
    \]
    of $X$.
    We can now compose $w'$ with the invertible 2-cell ${q}\comp{0}{X}t_1^X(v_1) = {q}\comp{0}{X}t_1^X(v_2)$ of $X$ and obtain the invertible $(n+2)$-cell 
    \[
    w'\comp{1}{X}\bigl({q}\comp{0}{X}t^X_1(v_1)\bigr)\colon
    \bigl((\check u\comp{0}{X}u)\comp{0}{X}v_1\bigr)\comp{1}{X}\bigl({q}\comp{0}{X}t^X_1(v_1)\bigr) \to \bigl((\check u\comp{0}{X}u)\comp{0}{X}v_2\bigr)\comp{1}{X}\bigl({q}\comp{0}{X}t_1^X(v_2)\bigr)
    \]
    of $X$.
    Let $\phi' = (\widetilde e_1\comp{0}{L1}\widetilde e_{n+1})\comp{1}{L1}(\widetilde e_2\comp{0}{L1}\widetilde e_1)\in(L1)_{n+1}$ and $\chi'= (\widetilde e_2\comp{0}{L1}\widetilde e_1)\comp{1}{L1}(\widetilde e_1\comp{0}{L1}\widetilde e_{n+1})\in(L1)_{n+1}$.
    Then $w'\comp{1}{X}({q}\comp{0}{X}t_1^X(v_1))$ has the type
    \[
    \xi\left(\phi',\begin{bmatrix}
        {q} & & v_1 \\
        & y &
    \end{bmatrix}\right)
    \quad \to \quad
    \xi\left(\phi',\begin{bmatrix}
        {q} & & v_2 \\
        & y &
    \end{bmatrix}\right)
    \]
    and so its $\phi'$-to-$\chi'$ padding is an invertible $(n+2)$-cell of type
    \[
    \bigl({q}\comp{0}{X}s_1^X(v_1)\bigr)\comp{1}{X}\bigl(\id{1}{X}{y}\comp{0}{X}v_1\bigr)\to 
    \bigl({q}\comp{0}{X}s_1^X(v_2)\bigr)\comp{1}{X}\bigl(\id{1}{X}{y}\comp{0}{X}v_2\bigr),
    \]
    which can also be written as
    \[
    \bigl({q}\comp{0}{X}s_1^X(v_1)\bigr)\comp{0}{X(y,z)}\bigl(\id{1}{X}{y}\comp{0}{X}v_1\bigr)\to 
    \bigl({q}\comp{0}{X}s_1^X(v_2)\bigr)\comp{0}{X(y,z)}\bigl(\id{1}{X}{y}\comp{0}{X}v_2\bigr).
    \]
    Using the inductive hypothesis to the weak $\omega$-category $X(y,z)$ and the invertible 1-cell $q\comp{0}{X}s_1^X(v_1)$ in $X(y,z)$, we obtain 
    $\id{1}{X}{y}\comp{0}{X}v_1\sim \id{1}{X}{y}\comp{0}{X}v_2$.
    By \cref{lem:i-ess-inj}, 
    we conclude $v_1\sim v_2$.
\end{proof}

Observe that, thanks to \cref{cor:composition-in-hom}, the above lemma immediately generalises to maps of the form $u \comp{k-1}{X} (-)$ where $u$ is an invertible $k$-cell in $X$ with $k\geq 1$: for any $n\geq k$ and any parallel pair $(v_1,v_2)$ of $n$-cells of $X$ such that $s^X_{k-1}(v_1)=s^X_{k-1}(v_2)=t^X_{k-1}(u)$, $u\comp{k-1}{X}v_1\sim u\comp{k-1}{X}v_2$ implies $v_1\sim v_2$.

Before moving on to \ref{step3}, we need to prove the following lemmas.

\begin{lemma}\label{lem:padding-a-pasting}
    In the situation of \cref{con:padding}, suppose further that we are given:
    \begin{itemize}
        \item an $(n+1)$-dimensional pasting scheme $\dot\kk \colon \kk \to \kk$,
        \item $(n+1)$-dimensional pasting instructions $\dot \phi \colon \phi \to \phi, \dot \chi \colon \chi \to \chi$ of arity $\dot \kk$, and
        \item an $(n+1)$-dimensional pasting diagram $\ww \colon \uu \to \vv$ of shape $\dot\kk$ in $X$
    \end{itemize}
    such that $w = \xi(\dot\phi,\ww)$.
    Then there exists an invertible $(n+2)$-cell in $X$ from the $\phi$-to-$\chi$ padding $w_{n+1}$ to $\xi(\dot\chi,\ww)$.
\end{lemma}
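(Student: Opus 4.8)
The plan is to lift the entire padding construction from $X$ to the free weak $\omega$-category $(LX,\mu^L_X)$, analyse it there, and transport the conclusion back along $\xi$ using Coherence (\cref{cor:coherence}). The three maps $\xi\colon(LX,\mu^L_X)\to(X,\xi)$, $L!\colon(LX,\mu^L_X)\to(L1,\mu^L_1)$, and $\ar_X\colon(LX,\mu^L_X)\to(TX,\mu^T_X\circ\ar_{TX})$ are all strict $\omega$-functors (the first by the algebra axiom, the second by naturality of $\mu^L$, the third by the monad-map axiom for $\ar$). Since $LX$ is the pullback of $\ar$ and $T!$, a cell of $LX$ is determined by its images under $L!$ and $\ar_X$; consequently composition in $LX$ is computed componentwise, in $L1$ via $L!$ and in $TX$ via $\ar_X$.

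First I would set $W_0=(\dot\phi,\ww)\in(LX)_{n+1}$ and, mirroring \cref{con:padding}, define $L_i=\bigl(\lambda_i,\id{i}{TX}{s^{TX}_{i-1}(\uu)}\bigr)$, $R_i=\bigl(\rho_i,\id{i}{TX}{t^{TX}_{i-1}(\vv)}\bigr)$, and $W_i=L_i\comp{i-1}{LX}(W_{i-1}\comp{i-1}{LX}R_i)$ for $1\le i\le n+1$. A short arity check (using that $\ar$ sends $\lambda_i,\rho_i$ to identities) confirms these are valid cells of $LX$, and since $\xi$ is a strict $\omega$-functor with $\xi(L_i)=\ell_i$ and $\xi(R_i)=r_i$, one gets $\xi(W_i)=w_i$ by induction; in particular $\xi(W_{n+1})=w_{n+1}$.

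The heart of the argument is to identify $W_{n+1}$ explicitly. Applying $\ar_X$ and using that composition is componentwise, I would prove $\ar_X(W_i)=\ww$ for all $i$ by induction: each step whiskers the $(n+1)$-cell $\ar_X(W_{i-1})=\ww$ on both sides with the identity cells $\id{i}{TX}{s^{TX}_{i-1}(\uu)}$ and $\id{i}{TX}{t^{TX}_{i-1}(\vv)}$ along the $(i-1)$-boundary that $\ww$ already carries, and these collapse by the \emph{strict} unit laws valid in the strict $\omega$-category $TX$. Hence $\ar_X(W_{n+1})=\ww$, so $W_{n+1}=(\Psi_{n+1},\ww)$ with $\Psi_{n+1}:=L!(W_{n+1})$, giving $w_{n+1}=\xi(\Psi_{n+1},\ww)$. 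I expect this $TX$-component computation to be the main obstacle, since it is where the interplay between the weak structure of $LX$ and the strict structure of $TX$ must be handled carefully.

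It remains to compare $\Psi_{n+1}$ with $\dot\chi$. Since $\ar\circ L!=T!\circ\ar_X$, we get $\ar(\Psi_{n+1})=T!(\ww)=\dot\kk=\ar(\dot\chi)$. For the boundaries, $L!$ being a strict $\omega$-functor gives $\Psi_{n+1}=\lambda_{n+1}\comp{n}{L1}(\Psi_n\comp{n}{L1}\rho_{n+1})$, whose $n$-source is $s^{L1}_n(\lambda_{n+1})=\chi$ and whose $n$-target is $t^{L1}_n(\rho_{n+1})=\chi$ (recall from the construction that $\lambda_{n+1}\colon\chi\to\phi_n$ and $\rho_{n+1}\colon\phi_n\to\chi$, and that $\phi_n$ is parallel to $\chi$). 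Thus $\Psi_{n+1}$ and $\dot\chi$ are parallel $(n+1)$-cells of $L1$ of the common arity $\dot\kk$, and Coherence (\cref{cor:coherence}) provides an invertible $(n+2)$-cell $\xi\bigl(\kappa(\langle\Psi_{n+1},\dot\chi\rangle,\dot\kk^{(n+2)}),\ww\bigr)\colon\xi(\Psi_{n+1},\ww)\to\xi(\dot\chi,\ww)$, i.e.\ an invertible $(n+2)$-cell from $w_{n+1}$ to $\xi(\dot\chi,\ww)$, as required.
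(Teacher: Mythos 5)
Your proposal is correct and is essentially the paper's own proof: your instructions $\Psi_i = L!(W_i)$ coincide exactly with the cells $\dot\phi_i = \lambda_i\comp{i-1}{L1}(\dot\phi_{i-1}\comp{i-1}{L1}\rho_i)$ that the paper defines directly in $L1$, and both arguments conclude by observing that this padded instruction is parallel to $\dot\chi$ with the same arity $\dot\kk$ and invoking coherence (\cref{cor:coherence}). The only difference is presentational: by lifting to $LX$ and projecting via $L!$ and $\ar_X$ you make explicit the componentwise computation (including the collapse of the $TX$-component to $\ww$ by the strict unit laws of $TX$) that the paper carries out in a single displayed chain of equalities establishing $w_i=\xi(\dot\phi_i,\ww)$.
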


\begin{remark}
    We can think of this lemma as stating a kind of naturality of the padding construction as a ``transformation'' from ``paste according to $\dot\phi$'' to ``paste according to $\dot \chi$.''
    For example, the case $n=1$ may be visualised as follows:
\[
\begin{tikzpicture}
    \node (ul) at (0,5) {$\bullet$};
    \node (ur) at (5,5) {$\bullet$};
    \node (ll) at (0,0) {$\bullet$};
    \node (lr) at (5,0) {$\bullet$};
    
    \draw[->, 2cell, bend left = 25] (1.5,1) to node[auto,labelsize] {$\ell_2$} (2,5.2);
    \draw[->] (ll) to node[auto,labelsize] {$\ell_1$} (ul);
    \draw[->] (ur) to node[auto,labelsize] {$r_1$} (lr);
    \draw[->, bend right = 30] (ll) to node[auto,labelsize,swap] {$\xi(\chi,\vv)$} (lr);
    \draw[->, bend left = 30] (ll) to node[auto,labelsize] {$\xi(\chi,\uu)$} (lr);
    \draw[->, bend right = 30] (ul) to node[auto,labelsize,swap] {$\xi(\phi,\vv)$} (ur);
    \draw[->, bend left = 30] (ul) to node[auto,labelsize] {$\xi(\phi,\uu)$} (ur);
    \draw[->, 2cell] (2.5,5.5) -- node[auto,labelsize] {$\xi(\dot\phi,\ww)$} (2.5,4.5);
    \draw[->, 2cell] (2.5,0.5) -- node[auto,labelsize,swap] {$\xi(\dot\chi,\ww)$} (2.5,-0.5);
    \draw[->, 2cell, bend left = 25] (3.5,4) to node[auto,labelsize] {$r_2$} (3,-0.2);
\end{tikzpicture}
\]
The lemma yields an invertible $3$-cell from the $\phi$-to-$\chi$ padding of $w=\xi(\dot\phi,\ww)$ (obtained by pasting all faces but the bottom) to $\xi(\dot\chi,\ww)$ (the bottom face), filling the interior of this ``naturality'' cylinder from $\xi(\dot\phi,-)$ to $\xi(\dot\chi,-)$.
\end{remark}

\begin{proof}[Proof of \cref{lem:padding-a-pasting}]
    Recall the cells $\lambda_i,\rho_i\in (L1)_i$, $\phi_i\in (L1)_n$, $\ell_i,r_i\in X_i$, and $w_i\in X_{n+1}$ ($1\leq i\leq n+1$) defined in \cref{con:padding}.
    Let $\dot\phi_0 = \dot\phi\colon \phi\to\phi$.
    By induction on $1 \le i \le n+1$, we define
    \[
    \dot\phi_{i} = \lambda_i\comp{i-1}{L1} (\dot\phi_{i-1}\comp{i-1}{L1}\rho_i);
    \]
    the expression on the right-hand side is well typed
    because we have $\dot\phi_i\colon \phi_i\to \phi_i$ in $L1$ for each $1\leq i\leq n$, as can be seen by applying \cref{prop:s-and-t-for-id-and-ast} inductively. When $i=n+1$, we have $\dot\phi_{n+1}\colon \chi\to\chi$. 
    We also have $\ar(\dot\phi_i) = \dot\kk$ and $w_i = \xi(\dot\phi_i,\ww)$ for each $1 \leq i\leq n+1$;
    the latter can be checked inductively as follows:
    \begin{align*}
        w_i &= \ell_i\comp{i-1}{X}(w_{i-1}\comp{i-1}{X}r_i )\\
        &= \xi\bigl(\lambda_i,\id{i}{TX}{s^{TX}_{i-1}(\ww)}\bigr)\comp{i-1}{X}\Bigl(\xi(\dot\phi_{i-1},\ww)\comp{i-1}{X}\xi\bigl(\rho_i,\id{i}{TX}{t^{TX}_{i-1}(\ww)}\bigr)\Bigr)\\
        &= \xi\Bigl(\bigl(\lambda_i,\id{i}{TX}{s^{TX}_{i-1}(\ww)}\bigr)\comp{i-1}{LX} \Bigl((\dot\phi_{i-1},\ww)\comp{i-1}{LX}\bigl(\rho_i,\id{i}{TX}{t^{TX}_{i-1}(\ww)}\bigr)\Bigr)\Bigr)\\
        &= \xi\bigl(\lambda_i\comp{i-1}{L1}(\dot\phi_{i-1}\comp{i-1}{L1}\rho_i),\ww\bigr)\\
        &= \xi(\dot\phi_i,\ww).
    \end{align*}
    In particular, the $\phi$-to-$\chi$ padding of $w$ can be written as $w_{n+1} = \xi(\dot\phi_{n+1},\ww)$.
    Since $\dot\chi$ is parallel to $\dot\phi_{n+1}$ in $L1$, the desired invertible $(n+2)$-cell may be obtained using coherence (\cref{cor:coherence}).
\end{proof}

\begin{lemma}\label{lem:padding-is-injective}
    In the situation of \cref{con:padding}, suppose we are given another $(n+1)$-cell $w' \colon \xi(\phi,\uu) \to \xi(\phi,\vv)$.
    Then the $\phi$-to-$\chi$ paddings of $w$ and $w'$ are connected by an invertible $(n+2)$-cell if and only if $w$ and $w'$ themselves are.
\end{lemma}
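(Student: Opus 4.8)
The plan is to prove, by downward induction on $i \in \{0,1,\dots,n+1\}$, that the intermediate cells $w_i$ and $w'_i$ occurring in the $\phi$-to-$\chi$ paddings of $w$ and $w'$ (\cref{con:padding}) satisfy $w_i \sim w'_i$. The hypothesis is exactly the top case $i = n+1$ (the two paddings are $w_{n+1}$ and $w'_{n+1}$), and the bottom case $i = 0$ is the desired conclusion, since $w_0 = w$ and $w'_0 = w'$. What makes this induction well-posed is that $w_i$ and $w'_i$ are \emph{parallel} $(n+1)$-cells: the pads $\lambda_i,\rho_i,\ell_i,r_i$ and the cells $\phi_i$ produced during the padding depend only on $\phi,\chi,\uu,\vv$ and not on the chosen filler, so both $w_i$ and $w'_i$ have source $\xi(\phi_i,\uu)$ and target $\xi(\phi_i,\vv)$.

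For the inductive step, fix $1 \le i \le n+1$ and assume $w_i \sim w'_i$. Recalling
\[
w_i = \ell_i\comp{i-1}{X}\bigl(w_{i-1}\comp{i-1}{X}r_i\bigr), \qquad
w'_i = \ell_i\comp{i-1}{X}\bigl(w'_{i-1}\comp{i-1}{X}r_i\bigr),
\]
in which $\ell_i$ and $r_i$ are \emph{invertible} $i$-cells (as observed at the end of \cref{con:padding}), I would peel off the two pads in turn. Whiskering on the left by the invertible $i$-cell $\ell_i$ reflects $\sim$: this is the generalisation of \cref{lem:whiskering-u-ess-inj} to maps $\ell_i\comp{i-1}{X}(-)$ recorded, via \cref{cor:composition-in-hom}, immediately after that lemma. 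Applying it to the parallel pair $\bigl(w_{i-1}\comp{i-1}{X}r_i,\ w'_{i-1}\comp{i-1}{X}r_i\bigr)$ yields $w_{i-1}\comp{i-1}{X}r_i \sim w'_{i-1}\comp{i-1}{X}r_i$. Whiskering on the right by the invertible $i$-cell $r_i$ reflects $\sim$ in the same manner, giving $w_{i-1}\sim w'_{i-1}$ and completing the step. Iterating down to $i=0$, and using transitivity of $\sim$ (\cref{cor:eq-rel}), produces the required invertible $(n+2)$-cell between $w$ and $w'$.

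The one input that is not already on the page — and what I expect to be the main obstacle — is the essential $\omega$-injectivity of whiskering on the \emph{right} by an invertible cell, since \cref{lem:whiskering-u-ess-inj} is stated only for left whiskering. I would obtain this as the left–right dual of \cref{lem:whiskering-u-ess-inj}, its proof being the evident mirror image of the padding argument given there. It is worth stressing that this right-handed statement cannot be reduced to the left-handed one by the naive device of ``whiskering the witnessing cell with an inverse $\check r_i$'': for parallel $(n+1)$-cells $a,b$ with $a\comp{i-1}{X}r_i \sim b\comp{i-1}{X}r_i$, the cells $a\comp{i-1}{X}\bigl(r_i\comp{i-1}{X}\check r_i\bigr)$ and $a\comp{i-1}{X}\id{i}{X}{s_{i-1}^X(r_i)}$ fail to be parallel whenever $i<n+1$ (they have distinct $i$-dimensional sources, by \cref{prop:s-and-t-for-id-and-ast}), so the clean unit-and-inverse-law manipulation that works in the top codimension $i=n+1$ (where \cref{lem:unit-law} applies directly) degenerates below it. Beyond establishing this dual and carefully matching boundaries through \cref{prop:s-and-t-for-id-and-ast}, no idea is needed that is not already present in \cref{con:padding}.
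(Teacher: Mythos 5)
Your proposal is correct and is essentially the paper's own proof: the paper disposes of this lemma in one sentence---``we are whiskering $w$ and $w'$ with exactly the same (invertible) pads, so this follows from \cref{lem:whiskering-u-ess-inj}''---which is exactly your downward induction peeling off $\ell_i$ and $r_i$, left implicit. Your explicit invocation of a left--right dual of \cref{lem:whiskering-u-ess-inj} for the right-hand pads $r_i$ (together with the observation that this dual does not reduce naively to the left-handed statement, for the parallelism reasons you give) spells out a detail that the paper's one-line citation silently subsumes.
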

\begin{proof}
    We are whiskering $w$ and $w'$ with exactly the same (invertible) pads, so the ``if'' and ``only if'' directions follow from \cref{thm:pasting-invertible-cells} and \cref{lem:whiskering-u-ess-inj} respectively.
\end{proof}

Now we are ready to prove \ref{step3}.

\begin{lemma}\label{lem:i-ess-surj}
    Let $(X,\xi)$ be a weak $\omega$-category and let $y,z \in X_0$.
    Then the whiskering map
    \[
    X\bigl(\id{1}{X}{y},z\bigr) \colon X(y,z) \to X(y,z)
    \]
    is essentially $\omega$-surjective.
\end{lemma}

\begin{proof}
    The unit law (\cref{lem:unit-law}) implies that $X\bigl(\id{1}{X}{y},z\bigr)$ is essentially $0$-surjective. 
    Let $n\geq1$, $s,t \in X_n$ be parallel $n$-cells which are in $X(y,z)$, and suppose there exists an $(n+1)$-cell
    \[
    u \colon \id{1}{X}{y}\comp{0}{X}s \to \id{1}{X}{y}\comp{0}{X}t
    \]
    in $X$.
    Define $v \colon s \to t$ to be the $\bigl(\id{1}{L1}{\widetilde{e}_0}\comp{0}{L1}\widetilde{e}_{n}\bigr)$-to-$\widetilde e_{n}$ padding of $u$.
    We wish to prove
    \[
    \id{1}{X}{y}\comp{0}{X}v \sim u.
    \]
    
    By \cref{lem:padding-is-injective}, it suffices to provide an invertible $(n+2)$-cell between the $\bigl(\id{1}{L1}{\widetilde e_0}\comp{0}{L1}\widetilde e_{n}\bigr)$-to-$\widetilde e_{n}$ paddings of both sides, but this padding of the right-hand side is precisely $v$.
    Since we can write $v$ as $v = \xi(\widetilde e_{n+1},[v])$, the desired invertible $(n+2)$-cell can be obtained by applying \cref{lem:padding-a-pasting} to
    \begin{itemize}
        \item $\dot\phi = \id{1}{L1}{\widetilde e_0}\comp{0}{L1} \widetilde e_{n+1}$,
        \item $\dot \chi = \widetilde e_{n+1}$, and
        \item $\ww = [v]$.\qedhere
    \end{itemize}
\end{proof}

Finally we can prove \ref{step4}.
We note that combining it with \cref{lem:whiskering-u-ess-inj} yields a generalisation of \cite[Lemma~4.6]{Lafont_Metayer_Worytkiewicz_folk_model_str_omega_cat} from strict $\omega$-categories to weak $\omega$-categories.

\begin{theorem}
\label{lem:invertible-u-ess-surj}
    Let $X$ be a weak $\omega$-category, $x,y,z \in X_0$, and $u \colon x \to y$ an invertible $1$-cell.
    Then the whiskering map
    \[
    X(u,z) \colon X(y,z) \to X(x,z)
    \]
    is essentially $\omega$-surjective.
\end{theorem}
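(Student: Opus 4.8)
The plan is to prove, by induction on $n$, the statement that $X(u,z)$ is essentially $n$-surjective for every weak $\omega$-category $X$, every $x,y,z\in X_0$, and every invertible $1$-cell $u\colon x\to y$, with a fixed inverse $\check u\colon y\to x$ and witnessing invertible $2$-cells $p\colon u\comp{0}{X}\check u\to \id{1}{X}{x}$ and $q\colon \check u\comp{0}{X}u\to\id{1}{X}{y}$. This mirrors the inductive proof of essential $\omega$-injectivity in \cref{lem:whiskering-u-ess-inj}, and reuses the special case $u=\id{1}{X}{y}$ already settled in \cref{lem:i-ess-surj}.

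For the base case $n=0$, given a $1$-cell $s\colon x\to z$ I would take $\check u\comp{0}{X}s\colon y\to z$ as the candidate preimage. Reassociating by coherence (\cref{cor:coherence}), whiskering the witness $p$ by $s$ (so that the result is invertible by \cref{thm:pasting-invertible-cells}), and applying the unit law (\cref{lem:unit-law}) gives $u\comp{0}{X}(\check u\comp{0}{X}s)\sim (u\comp{0}{X}\check u)\comp{0}{X}s\sim \id{1}{X}{x}\comp{0}{X}s\sim s$, which is essential $0$-surjectivity.

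For the inductive step, since essential $(n+1)$-surjectivity amounts to essential $n$-surjectivity (the inductive hypothesis) together with the top-dimensional lifting condition, it suffices to treat a parallel pair $(v_1,v_2)$ of $n$-cells of $X(y,z)$ and an $(n+1)$-cell $w\colon u\comp{0}{X}v_1\to u\comp{0}{X}v_2$ of $X(x,z)$, and to produce $\overline w\colon v_1\to v_2$ with $u\comp{0}{X}\overline w\sim w$. Following the manipulations of \cref{lem:whiskering-u-ess-inj}, I would whisker $w$ with $\check u$, reassociate by a $\phi$-to-$\chi$ padding to obtain a cell $(\check u\comp{0}{X}u)\comp{0}{X}v_1\to(\check u\comp{0}{X}u)\comp{0}{X}v_2$, then compose with $q$ and pad once more so as to land in the image of whiskering by the invertible $1$-cell $q\comp{0}{X}s_1^X(v_1)$ of $X(y,z)$, the remaining data recording a cell between $\id{1}{X}{y}\comp{0}{X}v_1$ and $\id{1}{X}{y}\comp{0}{X}v_2$. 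Applying the inductive hypothesis to $X(y,z)$ and this invertible $1$-cell, and then \cref{lem:i-ess-surj} to strip off the $\id{1}{X}{y}$-whiskering, produces the desired $\overline w$.

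The main obstacle is the \emph{roundtrip}: verifying that this $\overline w$ really satisfies $u\comp{0}{X}\overline w\sim w$, since producing $\overline w$ only used the forward transform of $w$. Here I would apply the \emph{same} forward transformation (whisker with $\check u$, reassociate, compose with $q$, pad) to $u\comp{0}{X}\overline w$ and show, using the naturality of padding (\cref{lem:padding-a-pasting}) together with the chain of equivalences built above, that it becomes $\sim$-equivalent to the transform of $w$. Because every step of the transformation is either whiskering by an invertible cell or a padding, it is injective up to $\sim$ by \cref{lem:whiskering-u-ess-inj} (and its higher-dimensional variant obtained via \cref{cor:composition-in-hom}) together with \cref{lem:padding-is-injective}; cancelling the transformation then yields $u\comp{0}{X}\overline w\sim w$. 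As in \cref{lem:whiskering-u-ess-inj}, I expect the real difficulty to lie not in this overall strategy but in the bookkeeping of pasting instructions and the dimension mismatches that the padding construction is precisely designed to absorb.
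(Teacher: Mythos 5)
Your proposal is correct and takes essentially the same approach as the paper: the same universally quantified induction on $n$, the same base case, the same forward transformation (whisker with $\check u$, pad, compose with a whiskered $q$, pad again), and the same supporting lemmas, with existence extracted at the final stage from the inductive hypothesis applied to $X(y,z)$ together with \cref{lem:i-ess-surj}. Your ``forward transform then cancel by injectivity'' handling of the roundtrip is precisely how the paper argues as well --- it organizes this as a chain of conditions \cref{eqn:surj-1}--\cref{eqn:surj-5} on $\overline w$, each shown equivalent to the next via whiskering injectivity (\cref{lem:whiskering-u-ess-inj}), padding injectivity (\cref{lem:padding-is-injective}), and padding naturality (\cref{lem:padding-a-pasting}).
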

\begin{remark}
The following diagrams summarise the idea of the inductive step in the proof:
\[
\begin{tikzpicture}
    \node (ul1) at (5,1.5) {$y$};
    \node (ul2) at (6,2.5) {$x$};
    \node (ul3) at (7,1.5) {$y$};
    \node (ul4) at (9,1.5) {$z$};
    
    \draw[->] (ul1) to node[auto,labelsize] {$\check u$} (ul2);
    \draw[->] (ul2) to node[auto,labelsize] {$u$} (ul3);
    \draw[->,bend right = 30] (ul1) to node[auto,labelsize,swap] {$\id{1}{X}{y}$} (ul3);
    \draw[->] (ul3) to node[auto,labelsize] {$v_1$} (ul4);
    \draw[->,2cell] (6,2) to node[auto,labelsize] {$q$} (6,1.4);

    \node at (7.4,0) {$\comp{1}{X}$};

    \node (ll1) at (5,-1.5) {$y$};
    \node (ll2) at (7,-1.5) {$y$};
    \node (ll4) at (9,-1.5) {$z$};

    \draw[->] (ll1) to node[auto,labelsize] {$\id{1}{X}{y}$} (ll2);
    \draw[->, bend left = 50] (ll2) to node[auto,labelsize] {$v_1$} (ll4);
    \draw[->, bend right = 50] (ll2) to node[auto,labelsize,swap] {$v_2$} (ll4);
    \draw[->,2cell,dashed] (8,-1.2) to node[auto,labelsize] {$\exists \overline{w}$} (8,-1.8);
    
    \node at (11,0) {$\sim$};
    
    \node (ur1) at (13,1.5) {$y$};
    \node (ur2) at (15,1.5) {$x$};
    \node (ur3) at (16,2.5) {$y$};
    \node (ur4) at (16,0.5) {$y$};
    \node (ur5) at (17,1.5) {$z$};

    \draw[->] (ur1) to node[auto,labelsize] {$\check u$} (ur2);
    \draw[->] (ur2) to node[auto,labelsize] {$u$} (ur3);
    \draw[->] (ur2) to node[auto,labelsize,swap] {$u$} (ur4);
    \draw[->] (ur3) to node[auto,labelsize] {$v_1$} (ur5);
    \draw[->] (ur4) to node[auto,labelsize,swap] {$v_2$} (ur5);
    \draw[->,2cell] (16,1.8) to node[auto,labelsize] {$w$} (16,1.2);
    
    \node at (15,0) {$\comp{1}{X}$};

    \node (lr1) at (13,-1.5) {$y$};
    \node (lr2) at (14,-0.5) {$x$};
    \node (lr3) at (15,-1.5) {$y$};
    \node (lr4) at (17,-1.5) {$z$};
    
    \draw[->] (lr1) to node[auto,labelsize] {$\check u$} (lr2);
    \draw[->] (lr2) to node[auto,labelsize] {$u$} (lr3);
    \draw[->,bend right = 30] (lr1) to node[auto,labelsize,swap] {$\id{1}{X}{y}$} (lr3);
    \draw[->] (lr3) to node[auto,labelsize] {$v_2$} (lr4);
    \draw[->,2cell] (14,-1) to node[auto,labelsize] {$q$} (14,-1.6);
\end{tikzpicture}
\]
More precisely, they depict the two sides of \cref{eqn:surj-5} below in the case $n=1$ with all paddings suppressed.
\end{remark}
\begin{proof}[Proof of \cref{lem:invertible-u-ess-surj}]
    We prove the statement 
\begin{equation}
\label{eqn:IH-surj}
    \parbox{\dimexpr\linewidth-5em}{for any weak $\omega$-category $(X,\xi)$, any $x,y,z \in X_0$, and any invertible $1$-cell $u \colon x \to y$ in $X$, the whiskering map $X(u,z) \colon X(y,z) \to X(x,z)$ is essentially $n$-surjective}
\end{equation}
    by induction on $n\in\N$.

    For the base case $n=0$, let $(X,\xi),x,y,z$, and $u$ be as in \cref{eqn:IH-surj}, and let $\check u\colon y\to x$ be an inverse of $u$. Take any $w\in \bigl(X(x,z)\bigr)_0$. Then we have 
    \[
    w\sim \id{1}{X}{x}\comp{0}{X}w\sim (u\comp{0}{X}\check u)\comp{0}{X}w\sim u\comp{0}{X}(\check u\comp{0}{X}w)
    \]
    and this completes the base case.
    (Note that, for a higher dimensional $w$, no two of the four expressions in the above displayed line necessarily yield parallel cells.
    In particular, the two sides of the second ``$\sim$'' may not be parallel even when $X$ is a strict $\omega$-category.
    Our inductive step below overcomes this problem in exactly the same way as the strict case \cite[Lemma 4.6]{Lafont_Metayer_Worytkiewicz_folk_model_str_omega_cat}.)
    
    For the inductive step, let $n \ge 1$ and let $(X,\xi),x,y,z$, and $u$ be as in \cref{eqn:IH-surj}, and let $\check u\colon y\to x$ be an inverse of $u$, with invertible 2-cell $q\colon \check u\comp{0}{X}u\to\id{1}{X}{y}$ of $X$.     
    Let $(v_1,v_2)$ be a parallel pair of $(n-1)$-cells of $X(y,z)$  and let $w\colon u\comp{0}{X}v_1\to u\comp{0}{X}v_2$ be an $n$-cell of $X(x,z)$. We wish to find an $n$-cell $\overline w\colon v_1\to v_2$ of $X(y,z)$ such that 
    \begin{equation}
    \label{eqn:surj-1}
        u\comp{0}{X}\overline w\sim w.
    \end{equation}
    To this end, we shall consider conditions \cref{eqn:surj-2}--\cref{eqn:surj-5} on an $n$-cell $\overline w\colon v_1\to v_2$ of $X(y,z)$, show that they are all equivalent to \cref{eqn:surj-1}, and observe that there exists $\overline w$ satisfying the last condition \cref{eqn:surj-5} (hence also \cref{eqn:surj-1}).

    By the essential $\omega$-injectivity of whiskering with $\check u$ (\cref{lem:whiskering-u-ess-inj}), \cref{eqn:surj-1} is equivalent to 
    \begin{equation}
        \label{eqn:surj-2}
        \check u\comp{0}{X}(u\comp{0}{X}\overline w)\sim \check u \comp{0}{X}w.
    \end{equation}
    
    Let $\phi = \widetilde e_1 \comp{0}{L1} (\widetilde e_1 \comp{0}{L1} \widetilde e_n)$ and $\chi=(\widetilde e_1 \comp{0}{L1} \widetilde e_1) \comp{0}{L1} \widetilde e_n$.
    Then \cref{eqn:surj-2} relates two cells of type
    \[
    \xi\left(\phi,\begin{bmatrix}
        \check u & & u  & & v_1\\
        & x & & y &
    \end{bmatrix}\right) \quad \to \quad 
    \xi\left(\phi,\begin{bmatrix}
        \check u & & u  & & v_2\\
        & x & & y &
    \end{bmatrix}\right).
    \]
    Hence, thanks to \cref{lem:padding-is-injective}, \cref{eqn:surj-2} is equivalent to the existence of an invertible $(n+2)$-cell between their $\phi$-to-$\chi$ paddings.
    Combining this observation with \cref{lem:padding-a-pasting}, we see that \cref{eqn:surj-2} is equivalent to
    \begin{equation}
    \label{eqn:surj-3}
        (\check u \comp{0}{X} u) \comp{0}{X} \overline w\sim v',
    \end{equation}
    where $v'$ is the $\phi$-to-$\chi$ padding of the right-hand side of \cref{eqn:surj-2}.

    By the essential $\omega$-injectivity of whiskering with ${q} \comp{0}{X}t_1^X(\overline w)$ (\cref{lem:whiskering-u-ess-inj} applied to $X(y,z)$), \cref{eqn:surj-3} is in turn equivalent to
    \begin{equation}\label{eqn:surj-4}
    \bigl((\check u \comp{0}{X} u) \comp{0}{X} \overline w \bigr)\comp{1}{X} \bigl({q} \comp{0}{X} t_1^X(\overline w)\bigr) \sim v' \comp{1}{X}\bigl({q} \comp{0}{X} t_1^X(\overline w)\bigr).
    \end{equation}
    Note that $t^X_1(\overline w)$ is either equal to $v_2$ (when $n=1$) or to $t^X_1(v_1)=t^X_1(v_2)$ (when $n\geq 2$), and hence the right-hand side of \cref{eqn:surj-4} does not depend on $\overline w$.
    
    Observe that the left-hand side of \cref{eqn:surj-4} can be written as
    \[
    \xi\left((\widetilde e_1 \comp{0}{L1}\widetilde e_{n+1}) \comp{1}{L1}(\widetilde e_2 \comp{0}{L1} \widetilde e_1), \ 
    \begin{bmatrix}
        {q} & & \overline w\\
        & y &
    \end{bmatrix}
    \right).
    \]
    Thus again by applying \cref{lem:padding-is-injective,lem:padding-a-pasting}, this time to the pasting instructions
    \begin{align*}
    \phi' &= (\widetilde e_1 \comp{0}{L1}\widetilde e_{n}) \comp{1}{L1}(\widetilde e_2 \comp{0}{L1} \widetilde e_1)\text{ and}\\
    \chi' &= (\widetilde e_2 \comp{0}{L1}\widetilde e_1) \comp{1}{L1}(\widetilde e_1 \comp{0}{L1} \widetilde e_n),
    \end{align*}
    we see that \cref{eqn:surj-4} is equivalent to
    \begin{equation}\label{eqn:surj-5}
    \bigl({q} \comp{0}{X} s_1^X(\overline w)\bigr) \comp{1}{X}\bigl(\id{1}{X}{y} \comp{0}{X} \overline w\bigr) \sim v''
    \end{equation}
    where $v''$ is the $\phi'$-to-$\chi'$ padding of the right-hand side of \cref{eqn:surj-4}.
    But $\comp{1}{X}$ in the left-hand side can be replaced by $\comp{0}{X(y,z)}$ thanks to \cref{cor:composition-in-hom}, so the essential $(n-1)$-surjectivity of whiskering with ${q} \comp{0}{X} s_1^X(\overline w)$ (in $X(y,z)$; note again that $s_1^X(\overline w)$ is either equal to $v_1$ or to $s^X_1(v_1)=s^X_1(v_2)$) and \cref{lem:i-ess-surj} imply that we can construct $\overline w \colon v_1 \to v_2$ satisfying \cref{eqn:surj-5}.
    This completes the proof.
\end{proof}

We can also prove the converse to this theorem (although we will not use this converse in what follows).

\begin{proposition}
\label{prop:ess-surj-implies-invertible}
    Let $(X,\xi)$ be a weak $\omega$-category and $u \colon x \to y$ a $1$-cell in $X$.
    Suppose that the whiskering map
    \[
    X(u,z) \colon X(y,z) \to X(x,z) 
    \]
    is essentially $\omega$-surjective for all $z \in X_0$.
    Then $u$ is invertible in $X$.
\end{proposition}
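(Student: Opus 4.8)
The plan is to produce a candidate inverse by the factorisation coming from essential $0$-surjectivity, and then to upgrade it to a genuine inverse using \cref{lem:weak-eq-refl-inv} to reflect invertibility along the whiskering maps. First I would apply essential $0$-surjectivity of $X(u,x)\colon X(y,x)\to X(x,x)$ to the $0$-cell $\id{1}{X}{x}$ of $X(x,x)$, obtaining a $1$-cell $\check u\colon y\to x$ together with an invertible $2$-cell $p\colon u\comp{0}{X}\check u\to\id{1}{X}{x}$. This already supplies the datum $p$ of \cref{def:invertible}, so the task reduces to constructing an invertible $2$-cell $q\colon\check u\comp{0}{X}u\to\id{1}{X}{y}$.

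Next I would record the ``one-sided'' consequence of $p$: by the unit laws (\cref{lem:unit-law}), associativity and whiskering coherence (\cref{cor:coherence}), and the fact that whiskering an invertible cell yields an invertible cell (\cref{thm:pasting-invertible-cells}), one has
\[
u\comp{0}{X}(\check u\comp{0}{X}u)\sim(u\comp{0}{X}\check u)\comp{0}{X}u\sim\id{1}{X}{x}\comp{0}{X}u\sim u\sim u\comp{0}{X}\id{1}{X}{y}.
\]
Reading this as an invertible $2$-cell $\beta\colon X(u,y)(\check u\comp{0}{X}u)\to X(u,y)(\id{1}{X}{y})$ in $X(x,y)$, essential $1$-surjectivity of $X(u,y)$ (part of the hypothesis, with $z=y$) applied to the parallel pair $(\check u\comp{0}{X}u,\ \id{1}{X}{y})$ of $0$-cells of $X(y,y)$ yields a $2$-cell $\alpha\colon\check u\comp{0}{X}u\to\id{1}{X}{y}$ with $u\comp{0}{X}\alpha\sim\beta$; since $\beta$ is invertible, \cref{cor:invariance} shows that $u\comp{0}{X}\alpha=X(u,y)(\alpha)$ is invertible.

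The final and key step is to deduce that $\alpha$ itself is invertible, for which I would invoke \cref{lem:weak-eq-refl-inv} applied to the whiskering map $X(u,y)\colon X(y,y)\to X(x,y)$. This map is essentially $\omega$-surjective by hypothesis, and it satisfies conditions (ii) and (iii) of that proposition because whiskering with $u$ commutes with $\id{n+1}{}{(-)}$ and with $\comp{n-1}{}$ up to invertible cells; in each case the two relevant pasting instructions in $L1$ are parallel with the same arity, so \cref{cor:coherence} supplies the required invertible comparison cells. Hence $X(u,y)$ reflects invertible cells, so $\alpha$ is invertible and serves as the desired $q$. Together with $p$, this exhibits $\check u$ as an inverse of $u$, establishing invertibility.

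I expect the main obstacle to be precisely the verification that the whiskering map meets hypotheses (ii) and (iii) of \cref{lem:weak-eq-refl-inv}: although morally immediate, it requires the coherence bookkeeping confirming that $u\comp{0}{X}(-)$ preserves identities and $\comp{n-1}{}$-composites up to $\sim$, i.e.\ identifying the two pasting instructions involved as parallel cells of $L1$ of equal arity before quoting \cref{cor:coherence}. The remaining steps are routine applications of the unit laws, coherence, and \cref{cor:invariance}.
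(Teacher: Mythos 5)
Your proof is correct and follows essentially the same route as the paper's: obtain $\check u$ and $p$ from essential $0$-surjectivity of $X(u,x)$, connect the images of $\check u\comp{0}{X}u$ and $\id{1}{X}{y}$ under $X(u,y)$ by a chain of invertible $2$-cells (associativity/coherence, whiskering of $p$, and the unit laws), and conclude by showing that $X(u,y)$ reflects invertible cells via \cref{lem:weak-eq-refl-inv}, whose hypotheses (ii) and (iii) are verified by coherence (\cref{cor:coherence}). The only cosmetic difference is that you spell out the lifting step (essential $1$-surjectivity plus \cref{cor:invariance}) that the paper compresses into the phrase ``it thus suffices to prove that $X(u,y)$ reflects invertible cells.''
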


\begin{proof}
    Since $X(u,x) \colon X(y,x) \to X(x,x)$ is essentially $0$-surjective, there exists $\check u \colon y \to x$ such that $u \comp{0}{X}\check u \sim \id{1}{X}{x}$.
    We wish to show $\check u \comp{0}{X} u \sim \id{1}{X}{y}$.
    
    Observe that the images of $\check u \comp{0}{X} u$ and $\id{1}{X}{y}$ under $X(u,y) \colon X(y,y) \to X(x,y)$ are connected by the following chain of invertible $2$-cells:
    \[
    u \comp{0}{X}(\check u \comp{0}{X} u) \sim (u \comp{0}{X} \check u) \comp{0}{X} u \sim \id{1}{X}{x} \comp{0}{X} u \sim u \sim u \comp{0}{X} \id{1}{X}{y}
    \]
    It thus suffices to prove that $X(u,y)$ reflects invertible cells.

    We will apply \cref{lem:weak-eq-refl-inv} to $X(u,y)$.
    Since we already know that $X(u,y)$ is essentially $\omega$-surjective, we only need to check that it satisfies (ii) and (iii) of that proposition.
    They both are straightforward consequences of coherence (\cref{cor:coherence}); for example, given a pair $v,v'$ of $n$-cells in $X(y,y)$ composable along the $(n-1)$-dimensional boundary, we have
    \begin{align*}
        u \comp{0}{X}\bigl(v \comp{n-1}{X(y,y)} v'\bigr) &= u \comp{0}{X}(v \comp{n}{X} v')\\
        & = \xi\left(\widetilde e_1 \comp{0}{L1} (\widetilde e_{n+1} \comp{n}{L1} \widetilde e_{n+1}), \begin{bmatrix}
            u & & v & & v'\\
            & y & & t^X_n(v) & 
        \end{bmatrix} \right)\\
        & \sim \xi\left((\widetilde e_1 \comp{0}{L1} \widetilde e_{n+1}) \comp{n}{L1} (\widetilde e_1 \comp{0}{L1} \widetilde e_{n+1}), \begin{bmatrix}
            u & & v & & v'\\
            & y & & t^X_n(v) & 
        \end{bmatrix} \right)\\
        &= (u \comp{0}{X} v) \comp{n}{X} (u \comp{0}{X} v')\\
        &= (u \comp{0}{X} v) \comp{n-1}{X(x,y)} (u \comp{0}{X} v')
    \end{align*}
    which proves that $X(u,y)$ satisfies (iii).
    The calculation for (ii) is similar.
\end{proof}

\subsection{The 2-out-of-3 property}
\label{subsec:2-out-of-3}
Now we are ready to complete the proof of the 2-out-of-3 property.

\begin{lemma}
\label{lem:functoriality}
    Let $X,Y$ be weak $\omega$-categories,
    $f\colon X\to Y$ be a strict $\omega$-functor, $u\colon x\to x'$ be a $1$-cell of $X$, and $x''$ be a $0$-cell of $X$. Then the following diagram of globular maps commutes:
    \[
    \begin{tikzcd}[row sep = large]
        X(x',x'')
        \arrow [r, "f_{x',x''}"]
        \arrow [d, "{X(u,x'')}", swap] &
        Y(fx',fx'')
        \arrow [d, "{Y(fu,fx'')}"]\\
        X(x,x'')
        \arrow [r, "f_{x,x''}", swap] &
        Y(fx,fx'')
    \end{tikzcd}
    \]
\end{lemma}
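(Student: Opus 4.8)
The plan is to verify commutativity by chasing a single cell through both legs of the square and reducing the resulting equation to the fact, recorded just before \cref{def:id-and-comp-in-weak-omega-cat}, that strict $\omega$-functors preserve standard pasting operations. Take an arbitrary $n$-cell $v$ of $X(x',x'')$, which is the same thing as an $(n+1)$-cell of $X$ with $s_0^X(v)=x'$ and $t_0^X(v)=x''$. The lower-left composite $f_{x,x''}\circ X(u,x'')$ sends $v$ first to the whiskering $u\comp{0}{X}v$ and then applies $f$, giving $f\bigl(u\comp{0}{X}v\bigr)$; the upper-right composite $Y(fu,fx'')\circ f_{x',x''}$ sends $v$ first to $fv$ and then whiskers with $fu$, giving $fu\comp{0}{Y}fv$. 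So the whole lemma reduces to the single identity
\[
f\bigl(u\comp{0}{X}v\bigr) = fu\comp{0}{Y}fv.
\]

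To prove this identity, I would unfold the left-hand whiskering as a standard pasting operation. Writing $(X,\xi)$ and $(Y,\nu)$ for the algebra structures and $\kk$ for the pasting scheme $\begin{bmatrix} 1 & & n+1\\ & 0 & \end{bmatrix}$, \cref{def:id-and-comp-in-weak-omega-cat} gives
\[
u\comp{0}{X}v = \xi\left(\spi\bigl(\kk^{(n+1)}\bigr),\ \begin{bmatrix} u & & v\\ & t_0^X(u)=s_0^X(v) & \end{bmatrix}\right),
\]
where $\spi(\kk^{(n+1)})=\widetilde e_1\comp{0}{L1}\widetilde e_{n+1}$. Applying the preservation formula $f\bigl(\xi(\spi(\kk),\uu)\bigr)=\nu\bigl(\spi(\kk),(Tf)(\uu)\bigr)$ to the pasting diagram $\uu=\begin{bmatrix} u & v\\ & t_0^X(u) \end{bmatrix}$, and using that $Tf$ acts on pasting diagrams cellwise (so that $(Tf)(\uu)$ is the diagram with cells $fu$, $fv$ glued along $ft_0^X(u)=t_0^Y(fu)=s_0^Y(fv)$), the right-hand side becomes exactly the standard pasting operation defining $fu\comp{0}{Y}fv$. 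This yields the required identity and hence commutativity of the square.

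I expect essentially no genuine obstacle here: the statement is a bookkeeping consequence of a result already in hand. The only points needing care are the dimension shift between the hom weak $\omega$-category and the ambient one (an $n$-cell of $X(x',x'')$ is an $(n+1)$-cell of $X$, and $\comp{0}{X}$ is the ambient operation whose narrow arity is $\widetilde e_1\comp{0}{L1}\widetilde e_{n+1}$), and keeping straight that the two occurrences of $\comp{0}{}$ live in $X$ and $Y$ respectively. Once these are tracked, the equality is immediate from preservation of standard pasting operations.
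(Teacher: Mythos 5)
Your proposal is correct and matches the paper's proof: the paper likewise chases a cell $v$ through both legs and reduces commutativity to the single identity $f\bigl(u\comp{0}{X}v\bigr)=fu\comp{0}{Y}fv$, which holds because strict $\omega$-functors preserve standard pasting operations. The only difference is that the paper asserts this identity in one line, while you spell out the unfolding via $\spi$ and $(Tf)(\uu)$ — a justified but routine elaboration of the same argument.
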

\begin{proof}
    Given any cell $v$ in $X(x',x'')$, we have 
    \begin{align*}
    \big(Y(fu,fx'')\circ f_{x',x''}\big)(v)
    &=fu \comp{0}{Y} fv\\
    &=f(u \comp{0}{X}v)\\
    &=\big(f_{x,x''}\circ X(u,x'')\big)(v).\qedhere
    \end{align*}
\end{proof}

The following implies \ref{item:C} of the 2-out-of-3 property.
\begin{lemma}
\label{prop:2-out-of-3-g}
    Let $n\in\N$, $X,Y,Z$ be weak $\omega$-categories, $f \colon X \to Y$ be a globular map, and $g \colon Y \to Z$ be a strict $\omega$-functor.
    If $f$ and $gf$ are essentially $n$-surjective, then so is $g$. 
\end{lemma}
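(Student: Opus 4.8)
The plan is to prove the statement by induction on $n$, treating it (as with the whiskering lemmas) as universally quantified over all weak $\omega$-categories $X,Y,Z$ and maps $f,g$; this is exactly statement \cref{eqn:IH-2}, and the extra freedom to let $f$ be a mere globular map is what makes the induction close, since at each stage the map I feed into the ``$f$'' slot will be a whiskering map. The base case $n=0$ is immediate: given $z\in Z_0$, essential $0$-surjectivity of $gf$ yields $x\in X_0$ with $gfx\sim z$, so $y:=fx\in Y_0$ satisfies $gy\sim z$; this uses only $gf$, not $f$. The same argument shows $g$ is essentially $0$-surjective in the inductive step, so it remains to prove, for each $y,y'\in Y_0$, that $g_{y,y'}\colon Y(y,y')\to Z(gy,gy')$ is essentially $(n-1)$-surjective.

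Fix $y,y'\in Y_0$. Since $f$ is essentially $0$-surjective, I choose $x,x'\in X_0$ with $fx\sim y$ and $fx'\sim y'$; by symmetry of $\sim$ (\cref{cor:eq-rel}) I obtain invertible $1$-cells $u\colon y\to fx$ and $u'\colon fx'\to y'$ in $Y$. As $g$ is strict it preserves invertibility (\cref{prop:strict-omega-functor-preserves-invertible-cells}), giving invertible $1$-cells $gu\colon gy\to gfx$ and $gu'\colon gfx'\to gy'$ in $Z$, and the whiskering maps $Y(u,u')$ and $Z(gu,gu')$ together with $g$ fit into the commutative square of \cref{eqn:comm-diag-induction} (commutativity follows from \cref{lem:functoriality}, using that $g$ preserves $\comp{0}{}$). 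The ingredients I have are: $f_{x,x'}$ and $(gf)_{x,x'}=g_{fx,fx'}\circ f_{x,x'}$ are essentially $(n-1)$-surjective (since $f$ and $gf$ are essentially $n$-surjective); the two-sided whiskering maps $Y(u,u')$ and $Z(gu,gu')$ are essentially $\omega$-surjective, hence essentially $(n-1)$-surjective, by \cref{lem:invertible-u-ess-surj} and its dual together with \cref{prop:2-out-of-3-gf} (to compose the two one-sided whiskerings, which preserve invertible cells by \cref{rmk:whiskering-preserves-invertible-cells}); and these whiskering maps themselves preserve invertible cells.

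I then apply the inductive hypothesis and \cref{prop:2-out-of-3-gf} twice. First, applying the inductive hypothesis at level $n-1$ to the globular map $f_{x,x'}$ and the strict $\omega$-functor $g_{fx,fx'}$ (which is strict because $g$ is), with $(gf)_{x,x'}$ essentially $(n-1)$-surjective, shows $g_{fx,fx'}$ is essentially $(n-1)$-surjective. Next, \cref{prop:2-out-of-3-gf} applied to $g_{fx,fx'}$ and the invertibility-preserving map $Z(gu,gu')$ (both essentially $(n-1)$-surjective) shows the composite $Z(gu,gu')\circ g_{fx,fx'}$ is essentially $(n-1)$-surjective. By commutativity of \cref{eqn:comm-diag-induction} this composite equals $g_{y,y'}\circ Y(u,u')$, so a second application of the inductive hypothesis at level $n-1$ --- now with the globular map $Y(u,u')$ and the strict $\omega$-functor $g_{y,y'}$, using that $Y(u,u')$ is essentially $(n-1)$-surjective --- yields the desired essential $(n-1)$-surjectivity of $g_{y,y'}$.

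The main obstacle is conceptual rather than computational: the whole argument rests on \cref{lem:invertible-u-ess-surj}, whose proof is the technical heart of the paper and which I take here as a black box. Within the present proof the delicate point is bookkeeping the strict-versus-globular distinction --- the maps occupying the ``$g$'' slot ($g_{fx,fx'}$ and $g_{y,y'}$) must be strict $\omega$-functors for the inductive hypothesis to apply, whereas the maps in the ``$f$'' slot ($f_{x,x'}$ and the whiskering map $Y(u,u')$) are only required to be globular, which is precisely why the statement was strengthened from \cref{eqn:IH-1} to \cref{eqn:IH-2}. Once this is arranged, the induction closes cleanly.
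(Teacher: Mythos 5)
Your proposal is correct and follows essentially the same route as the paper's own proof: the same induction on $n$ (with the crucial strengthening that $f$ is only a globular map), the same commutative square \cref{eqn:comm-diag-induction} built from the whiskering maps via \cref{lem:functoriality}, and the same three-step conclusion using the inductive hypothesis, \cref{prop:2-out-of-3-gf}, and the essential $\omega$-surjectivity of whiskering with invertible cells (\cref{lem:invertible-u-ess-surj}). Your only additions are slightly more explicit bookkeeping---decomposing the two-sided whiskerings into one-sided ones and invoking \cref{rmk:whiskering-preserves-invertible-cells} to compose them---which the paper handles with the same citations but more tersely.
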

\begin{proof}
    We prove this by induction on $n$.
    
    The base case $n=0$ is easy and only requires the essential $0$-surjectivity of $gf$. 

    For the inductive step, let $n>0$. Let $y,y'\in Y_0$. We want to show that $g_{y,y'}\colon Y(y,y')\to Z(gy,gy')$ is essentially $(n-1)$-surjective. Since $f$ is essentially $0$-surjective, there exist $x,x'\in X_0$ and invertible $1$-cells $u\colon y\to fx$ and $u'\colon fx'\to y'$ in $Y$.
    Consider the following diagram of globular maps.
    \[
    \begin{tikzcd}[row sep = large]
        X(x,x')
        \arrow [r, "f_{x,x'}"]
        \arrow [rr, bend left, "(gf)_{x,x'}"]&
        Y(fx,fx')
        \arrow [d, "{Y(u,u')}", swap]
        \arrow [r, "g_{fx,fx'}"]
        &
        Z(gfx,gfx')
        \arrow [d, "{Z(gu,gu')}"]\\
        &
        Y(y,y')
        \arrow [r, "g_{y,y'}"] 
         &
        Z(gy,gy')
    \end{tikzcd}
    \]
    Here, $Y(u,u')=Y(y,u')\circ Y(u,fx')$ and $Z(gu,gu')=Z(gy,gu')\circ Z(gu,gfx')$. Then the above diagram commutes by \cref{lem:functoriality} and its dual. 

    Since $f_{x,x'}$ and $(gf)_{x,x'}$ are essentially $(n-1)$-surjective, $g_{fx,fx'}$ is essentially $(n-1)$-surjective by the inductive hypothesis.
    Since $Z(gu,gu')$ is essentially $\omega$-surjective by \cref{lem:invertible-u-ess-surj}, $Z(gu,gu')\circ g_{fx,fx'}=g_{y,y'}\circ Y(u,u')$ is essentially $(n-1)$-surjective by \cref{rmk:whiskering-preserves-invertible-cells} and \cref{prop:2-out-of-3-gf}. Since $Y(u,u')$ is essentially $\omega$-surjective by \cref{lem:invertible-u-ess-surj}, $g_{y,y'}$ is essentially $(n-1)$-surjective by the inductive hypothesis.
\end{proof}

\begin{theorem}[2-out-of-3]
\label{thm:2-out-of-3}
    Let $f \colon X \to Y$ and $g \colon Y \to Z$ be strict $\omega$-functors between weak $\omega$-categories.
    If two of $f$, $g$, and $gf$ are $\omega$-weak equivalences, then so is the third.
\end{theorem}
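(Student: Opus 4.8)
The plan is to unwind the theorem into the three implications \ref{item:A}, \ref{item:B}, and \ref{item:C} recorded in the introduction, and to read each one off from the lemma already established for it. Throughout I use the fact that, by \cref{def:omega-weak-eq}, an $\omega$-weak equivalence is precisely a strict $\omega$-functor that is essentially $n$-surjective for every $n \in \N$. Since $f$, $g$, and $gf$ are all strict $\omega$-functors irrespective of the hypotheses (the class of strict $\omega$-functors being closed under composition), in each of the three cases the ``is a strict $\omega$-functor'' part of the conclusion is automatic, and the only thing to verify is essential $n$-surjectivity for all $n$.

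For \ref{item:A}, assume $f$ and $g$ are $\omega$-weak equivalences. Then for each $n$ both are essentially $n$-surjective, and $g$, being a strict $\omega$-functor, preserves invertible cells by \cref{prop:strict-omega-functor-preserves-invertible-cells}. Applying \cref{prop:2-out-of-3-gf} at each $n$ shows that $gf$ is essentially $n$-surjective for all $n$, hence an $\omega$-weak equivalence. For \ref{item:B}, assume $g$ and $gf$ are $\omega$-weak equivalences; then $g$ is essentially $(n+1)$-surjective for every $n$ and reflects invertible cells by \cref{lem:weak-eq-refl-inv}, while $gf$ is essentially $n$-surjective, so \cref{prop:2-out-of-3-f} gives that $f$ is essentially $n$-surjective for all $n$, and thus an $\omega$-weak equivalence. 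For \ref{item:C}, assume $f$ and $gf$ are $\omega$-weak equivalences, so both are essentially $n$-surjective for each $n$; since $g$ is a strict $\omega$-functor, \cref{prop:2-out-of-3-g} yields that $g$ is essentially $n$-surjective for all $n$, hence an $\omega$-weak equivalence.

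No genuine obstacle remains at this point: the theorem is a bookkeeping assembly of the three preceding lemmas, and the only care required is to match each hypothesis to the correct lemma. The real difficulty lies entirely upstream---principally in \cref{prop:2-out-of-3-g} (the source of \ref{item:C}), which rests on the essential $\omega$-surjectivity of whiskering by an invertible $1$-cell (\cref{lem:invertible-u-ess-surj}, i.e.\ \ref{item:D}) and on the padding construction developed to prove it. Accordingly, I would present the proof simply as the conjunction of \ref{item:A}, \ref{item:B}, and \ref{item:C}, each dispatched by one application of the corresponding lemma across all $n \in \N$.
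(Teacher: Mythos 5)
Your proposal is correct and coincides with the paper's own proof, which likewise dispatches the theorem as the conjunction of \ref{item:A}, \ref{item:B}, and \ref{item:C} via \cref{prop:2-out-of-3-gf,prop:2-out-of-3-f,prop:2-out-of-3-g}. Your matching of hypotheses is exactly right as well: \cref{prop:strict-omega-functor-preserves-invertible-cells} supplies the preservation hypothesis for \ref{item:A}, and \cref{lem:weak-eq-refl-inv} supplies the reflection hypothesis for \ref{item:B}.
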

\begin{proof}
    This follows from \cref{prop:2-out-of-3-gf,prop:2-out-of-3-f,prop:2-out-of-3-g}.
\end{proof}

We can also establish the stronger 2-out-of-6 property (so we have a \emph{homotopical category} in the sense of \cite{Dwyer_Hirschhorn_Kan_Smith}).

\begin{corollary}[2-out-of-6]
\label{cor:2-out-of-6}
    Let $f \colon X \to Y$, $g \colon Y \to Z$, and $h \colon Z \to W$ be strict $\omega$-functors between weak $\omega$-categories.
    Suppose that $gf$ and $hg$ are $\omega$-weak equivalences.
    Then $f,g,h$, and $hgf$ are also $\omega$-weak equivalences.
\end{corollary}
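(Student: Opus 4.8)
The plan is to deduce everything from \cref{thm:2-out-of-3} once the single fact that the triple composite $hgf$ is an $\omega$-weak equivalence has been established. Granting this, the four conclusions follow by three applications of 2-out-of-3: from $hg$ and $hgf=(hg)\circ f$ being $\omega$-weak equivalences we obtain $f$ (this is \ref{item:B} with the ``$g$'' role played by $hg$); from $gf$ and $hgf=h\circ(gf)$ we obtain $h$ (this is \ref{item:C} with ``$f$''$=gf$); and finally from $f$ and $gf$ we obtain $g$ (\ref{item:C} again). So the entire statement reduces to showing that $hgf$ is essentially $\omega$-surjective; it is automatically a strict $\omega$-functor, being a composite of such.

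The heart of the argument is therefore a direct proof that $hgf$ is essentially $\omega$-surjective, and this is the step I expect to require the most care: one cannot simply lift along $f$, since $f$ is not yet known to be essentially surjective. Instead I would lift in two stages, using that $hg$ and $gf$ are \emph{both} essentially $\omega$-surjective and that the strict $\omega$-functors $g$ and $h$ preserve invertible cells (\cref{prop:strict-omega-functor-preserves-invertible-cells}). Concretely, fix $k\geq 1$, a parallel pair $(x_1,x_2)$ of $(k-1)$-cells in $X$, and a $k$-cell $\omega\colon hgfx_1\to hgfx_2$ in $W$. Regarding $\omega$ as a $k$-cell $(hg)(fx_1)\to (hg)(fx_2)$ and using the essential $k$-surjectivity of $hg$, I would choose a $k$-cell $\overline\omega'\colon fx_1\to fx_2$ in $Y$ with $(hg)\overline\omega'\sim\omega$. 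Then $g\overline\omega'$ is a $k$-cell $(gf)x_1\to(gf)x_2$ in $Z$, so the essential $k$-surjectivity of $gf$ supplies a $k$-cell $\overline\omega\colon x_1\to x_2$ in $X$ with $(gf)\overline\omega\sim g\overline\omega'$. Applying the strict $\omega$-functor $h$ and using \cref{prop:strict-omega-functor-preserves-invertible-cells,cor:eq-rel}, one obtains $(hgf)\overline\omega\sim (hg)\overline\omega'\sim\omega$, whence $(hgf)\overline\omega\sim\omega$. The analogous, easier two-stage argument at dimension $0$---lift $w\in W_0$ through $hg$ to some $y\in Y_0$, then approximate $gy$ through $gf$ and apply $h$---gives essential $0$-surjectivity. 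Together these show that $hgf$ is essentially $\omega$-surjective.

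The only subtlety worth flagging is that, although 2-out-of-6 is in general strictly stronger than 2-out-of-3, here the extra content is absorbed entirely into the direct verification that $hgf$ is an $\omega$-weak equivalence; everything else is formal. In particular, the two-stage lift is exactly what sidesteps the obstruction that the pair $(fx_1,fx_2)$ must eventually be approximated inside $X$ (the difficulty that made \ref{item:C} genuinely hard): we never pull back along $f$ alone, but instead push the intermediate cell forward along $g$ and then pull it back along the genuinely essentially surjective map $gf$. I would accordingly present the $hgf$-computation as a short lemma and then close the corollary with the three one-line invocations of \cref{thm:2-out-of-3} described above.
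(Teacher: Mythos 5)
Your proposal is correct and follows essentially the same route as the paper: reduce everything to showing that $hgf$ is an $\omega$-weak equivalence via the two-stage lift (lift through $hg$ to a cell of $Y$, push forward along $g$, lift through $gf$, and use that $h$ preserves invertible cells together with transitivity of $\sim$), then invoke \cref{thm:2-out-of-3}. The only cosmetic difference is that the paper runs the argument at dimension $0$ and then appeals to the inductive definition via hom weak $\omega$-categories, whereas you verify the explicit dimension-$k$ characterisation directly; these are equivalent.
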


\begin{proof}
    We will show that $hgf$ is an $\omega$-weak equivalence; the rest will follow by the 2-out-of-3 property.

    To see that $hgf$ is essentially $0$-surjective, let $w \in W_0$.
    Since $hg$ is essentially $0$-surjective, there exists $y \in Y_0$ with $hgy \sim w$.
    Moreover, since $gf$ is essentially $0$-surjective, there exists $x \in X_0$ such that $gfx \sim gy$.
    Then we have $hgfx \sim hgy \sim w$.

    By applying the above argument to the hom weak $\omega$-categories, we can deduce that $hgf$ is essentially $n$-surjective for all $n \in \N$.
\end{proof}

\subsection{Other properties of \texorpdfstring{$\omega$}{ω}-weak equivalences}
\label{subsec:other-properties-omega-weak-eq}
Here we collect miscellaneous properties of $\omega$-weak equivalences. 

Recall the notion of essential $n$-injectivity from \cref{def:ess-inj}.

\begin{lemma}\label{lem:surj-implies-inj}
    Let $n\in\N$, $X,Y$ be weak $\omega$-categories, and $f \colon X \to Y$ be a globular map reflecting invertible cells. If $f$ is essentially $(n+1)$-surjective, then $f$ is essentially $n$-injective.
\end{lemma}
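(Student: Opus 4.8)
The plan is to prove the statement by induction on $n$, mirroring the inductive shape of essential injectivity itself, with the crux being a reflection property of the induced maps on hom weak $\omega$-categories.

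For the base case $n=0$, I would establish essential $0$-injectivity directly. Suppose $x,x'\in X_0$ satisfy $fx\sim fx'$, so there is an invertible $1$-cell $w\colon fx\to fx'$ in $Y$. The hypothesis for $n=0$ is precisely that $f$ is essentially $1$-surjective, so applied to the pair $(x,x')$ and the $1$-cell $w$ it yields a $1$-cell $\overline w\colon x\to x'$ in $X$ with $f\overline w\sim w$. By \cref{cor:invariance}, $f\overline w$ is invertible, and since $f$ reflects invertible cells, $\overline w$ is invertible; hence $x\sim x'$.

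For the inductive step with $n\ge 1$, I would verify the two ingredients of essential $n$-injectivity. Essential $0$-injectivity follows by applying the already-proven base case to $f$ itself, using that essential $(n+1)$-surjectivity implies essential $1$-surjectivity (a monotonicity immediate from the explicit characterisation of essential surjectivity recorded after \cref{def:omega-weak-eq}). For the remaining condition, I fix $x,x'\in X_0$ and aim to show that $f_{x,x'}\colon X(x,x')\to Y(fx,fx')$ is essentially $(n-1)$-injective by invoking the inductive hypothesis (the statement for $n-1$) applied to $f_{x,x'}$. This needs two inputs: first, that $f_{x,x'}$ is essentially $\bigl((n-1)+1\bigr)=n$-surjective, which is immediate from the essential $(n+1)$-surjectivity of $f$ by definition; and second, that $f_{x,x'}$ reflects invertible cells.

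The only nonroutine step, and hence the main obstacle, is verifying that $f_{x,x'}$ reflects invertible cells. I would deduce this from \cref{rmk:hom-weak-omega-cat}: an $l$-cell $c$ of $X(x,x')$ with $l\ge 1$ is a cell of $X$ of dimension $l+1\ge 2$, so by \cref{rmk:hom-weak-omega-cat} its invertibility in $X(x,x')$ is equivalent to its invertibility in $X$, and likewise $f_{x,x'}(c)=fc$ is invertible in $Y(fx,fx')$ if and only if it is invertible in $Y$. Consequently, if $f_{x,x'}(c)$ is invertible in $Y(fx,fx')$, then $fc$ is invertible in $Y$, so $c$ is invertible in $X$ because $f$ reflects invertible cells, and therefore $c$ is invertible in $X(x,x')$. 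With this established, the inductive hypothesis gives the essential $(n-1)$-injectivity of $f_{x,x'}$, which together with the essential $0$-injectivity of $f$ completes the inductive step and the proof.
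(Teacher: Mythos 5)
Your proof is correct and follows essentially the same route as the paper's: induction on $n$, with the base case using essential $1$-surjectivity, \cref{cor:invariance}, and reflection of invertible cells, and the inductive step reducing to the induced maps on hom weak $\omega$-categories. You are in fact more explicit than the paper, which leaves implicit both the essential $0$-injectivity of $f$ in the inductive step and the fact (via \cref{rmk:hom-weak-omega-cat}) that $f_{x,x'}$ reflects invertible cells --- a hypothesis that must be verified before the inductive hypothesis can be applied.
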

\begin{proof}
    We prove this by induction on $n$.

    The base case $n=0$ can be proved as follows. Suppose that $f\colon X\to Y$ is essentially $1$-surjective. Let $x,x'\in X_0$ be $0$-cells such that there exists an invertible $1$-cell $u\colon fx\to fx'$ in $Y$. Then there exists a $1$-cell $\overline u\colon x\to x'$ in $X$ such that $f\overline u\sim u$. By \cref{cor:invariance}, $f\overline u$ is invertible, and since $f$ reflects invertible cells, $\overline u$ is invertible. Therefore we have $x\sim x'$. 

    For the inductive step, let $n>0$ and suppose that $f\colon X\to Y$ is essentially $(n+1)$-surjective.  Then for each $x,x'\in X_0$, $f_{x,x'}$ is essentially $n$-surjective. By the inductive hypothesis, $f_{x,x'}$ is essentially $(n-1)$-injective. Hence $f$ is essentially $n$-injective.
\end{proof}

\begin{proposition}\label{prop:ess-inj}
    Any $\omega$-weak equivalence is essentially $\omega$-injective.
\end{proposition}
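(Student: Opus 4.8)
The plan is to deduce this directly from two facts already established in the excerpt: that every $\omega$-weak equivalence reflects invertible cells (\cref{lem:weak-eq-refl-inv}), and that a globular map which reflects invertible cells and is essentially $(n+1)$-surjective is automatically essentially $n$-injective (\cref{lem:surj-implies-inj}). The observation that makes these fit together is that an $\omega$-weak equivalence $f\colon X\to Y$ is, by \cref{def:omega-weak-eq}, an essentially $\omega$-surjective strict $\omega$-functor, and hence is in particular essentially $(n+1)$-surjective for every $n\in\N$.

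Concretely, I would argue as follows. First, apply \cref{lem:weak-eq-refl-inv} to conclude that $f$ reflects invertible cells. Then fix an arbitrary $n\in\N$: since $f$ is essentially $(n+1)$-surjective and reflects invertible cells, \cref{lem:surj-implies-inj} yields that $f$ is essentially $n$-injective. As this holds for every $n\in\N$, $f$ is essentially $\omega$-injective by \cref{def:ess-surj-ess-inj}. I do not expect any genuine obstacle here: the statement is a formal corollary of the two cited results, and the only point to verify — that an $\omega$-weak equivalence satisfies the hypotheses of \cref{lem:surj-implies-inj} at every level $n$ — is immediate from the definition of essential $\omega$-surjectivity, so no induction or additional construction is needed beyond invoking the lemmas.
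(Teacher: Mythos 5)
Your proposal is correct and matches the paper's own proof exactly: the paper likewise deduces the statement from \cref{lem:weak-eq-refl-inv} and \cref{lem:surj-implies-inj}, with the quantification over $n\in\N$ handled implicitly just as you spell it out. Nothing is missing.
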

\begin{proof}
    This follows from \cref{lem:weak-eq-refl-inv} and \cref{lem:surj-implies-inj}.
\end{proof}

\begin{proposition}[{Cf.~\cite[Lemma~4.12]{Lafont_Metayer_Worytkiewicz_folk_model_str_omega_cat} and \cite[Proposition~20.1.18]{Ara_etal_polygraphs}}]
\label{prop:weak-eq-retract}
    The class of $\omega$-weak equivalences is closed under retracts in the arrow category $\WkCats{\omega}^\mathbf{2}$. 
    More generally, if $n\in\N$ and if
\begin{equation*}
\begin{tikzpicture}[baseline=-\the\dimexpr\fontdimen22\textfont2\relax ]
      \node(00) at (0,1) {$X$};
      \node(01) at (2,1) {$X'$};
      \node(10) at (0,-1) {$Y$};
      \node(11) at (2,-1) {$Y'$};
      \node(02) at (4,1) {$X$};
      \node(12) at (4,-1) {$Y$};
      
      \draw [->] (00) to node[auto, swap, labelsize] {$i$} (01); 
      \draw [->] (01) to node[auto, labelsize] {$f'$} (11); 
      \draw [->] (00) to node[auto,swap,labelsize] {$f$} (10); 
      \draw [->] (10) to node[auto,labelsize] {$j$} (11);  
      \draw [->] (01) to node[auto, swap, labelsize] {$p$} (02); 
      \draw [->] (11) to node[auto, labelsize] {$q$} (12); 
      \draw [->] (02) to node[auto,labelsize] {$f$} (12); 
      \draw [->, bend left=30] (00) to node[auto, labelsize] {$1_X$} (02); 
      \draw [->, bend right=30] (10) to node[auto, swap, labelsize] {$1_Y$} (12); 
\end{tikzpicture}
\end{equation*} 
    is a commutative diagram of globular maps between (the underlying globular sets of) weak $\omega$-categories such that $q$ preserves invertible cells, then $f$ is essentially $n$-surjective whenever $f'$ is.
\end{proposition}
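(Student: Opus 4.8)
The plan is to establish the general (essential $n$-surjectivity) statement by induction on $n$, treating the class of weak $\omega$-categories as universally quantified, and then to recover the first sentence as the special case in which all maps are strict $\omega$-functors. Throughout I write the retract conditions as $p\circ i=1_X$ and $q\circ j=1_Y$, and the two commuting squares as $f'\circ i=j\circ f$ and $f\circ p=q\circ f'$.

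For the base case $n=0$ I argue directly. Given $y\in Y_0$, essential $0$-surjectivity of $f'$ supplies $x'\in X'_0$ with $f'x'\sim jy$; setting $x=px'$ and using $f\circ p=q\circ f'$ together with the hypothesis that $q$ preserves invertible cells, I compute $fx=f(px')=q(f'x')\sim q(jy)=(q\circ j)(y)=y$. Hence $f$ is essentially $0$-surjective.

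For the inductive step ($n\ge 1$), essential $0$-surjectivity of $f$ again follows from the base-case argument, since $f'$ being essentially $n$-surjective is in particular essentially $0$-surjective. It then remains to show that for every pair $x_1,x_2\in X_0$ the induced map $f_{x_1,x_2}\colon X(x_1,x_2)\to Y(fx_1,fx_2)$ is essentially $(n-1)$-surjective. Here I use functoriality of the hom construction: applying it to the whole retract diagram at the pair $(x_1,x_2)$, and tracking the identities $p(ix_\bullet)=x_\bullet$, $f'(ix_\bullet)=jfx_\bullet$, and $q(jfx_\bullet)=fx_\bullet$, produces the retract diagram
\[
\begin{tikzcd}[row sep=large,column sep=large]
X(x_1,x_2)\arrow[r,"i_{x_1,x_2}"]\arrow[d,"f_{x_1,x_2}"']&
X'(ix_1,ix_2)\arrow[r,"p_{ix_1,ix_2}"]\arrow[d,"f'_{ix_1,ix_2}"]&
X(x_1,x_2)\arrow[d,"f_{x_1,x_2}"]\\
Y(fx_1,fx_2)\arrow[r,"j_{fx_1,fx_2}"']&
Y'(jfx_1,jfx_2)\arrow[r,"q_{jfx_1,jfx_2}"']&
Y(fx_1,fx_2)
\end{tikzcd}
\]
of globular maps between weak $\omega$-categories, whose retract and commutativity conditions are precisely the images under the hom construction of the original ones. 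Since $f'$ is essentially $n$-surjective, $f'_{ix_1,ix_2}$ is essentially $(n-1)$-surjective; and since $q$ preserves invertible cells, so does $q_{jfx_1,jfx_2}$ by \cref{rmk:hom-weak-omega-cat}. The inductive hypothesis applied to this hom retract diagram then yields that $f_{x_1,x_2}$ is essentially $(n-1)$-surjective, completing the step.

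Finally, the first sentence follows by applying the general statement for every $n\in\N$: a retract in $\WkCats{\omega}^{\mathbf{2}}$ consists of strict $\omega$-functors $i,p,j,q$ (and a strict $\omega$-functor $f$), so in particular $q$ preserves invertible cells by \cref{prop:strict-omega-functor-preserves-invertible-cells}; hence if $f'$ is an $\omega$-weak equivalence then $f$ is essentially $\omega$-surjective, and being a strict $\omega$-functor it is an $\omega$-weak equivalence. I expect the only genuinely delicate point to be the bookkeeping in the inductive step — verifying that the hom construction really carries the retract data to retract data and that $q_{jfx_1,jfx_2}$ inherits preservation of invertible cells — but both are routine consequences of the functoriality of $X\mapsto X(x,x')$ and \cref{rmk:hom-weak-omega-cat}.
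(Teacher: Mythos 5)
Your proof is correct and follows essentially the same route as the paper's: induction on $n$, with the base case argued directly from $q\circ j=1_Y$, $f\circ p=q\circ f'$ and preservation of invertible cells by $q$, and the inductive step obtained by applying the hom construction to the whole retract diagram and invoking the inductive hypothesis. The only differences are cosmetic — you are slightly more explicit than the paper in verifying that $q_{jfx_1,jfx_2}$ inherits preservation of invertible cells (via \cref{rmk:hom-weak-omega-cat}) and in deducing the first sentence from the general statement, both of which the paper leaves implicit.
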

\begin{proof}
We prove the latter statement by induction on $n$.

    The base case $n=0$ is shown as follows. Take any $y\in Y_0$. Then we obtain $jy\in Y'_0$. Since $f'$ is essentially $0$-surjective, there exist $x'\in X'_0$ and an invertible $1$-cell $u\colon f'x'\to jy$ in $Y'$. The $1$-cell $qu\colon qf'x'\to qjy$ in $Y$ is invertible by the assumption. Since $qf'x'=fpx'$ and $qjy=y$, we have $fpx'\sim y$ for $px'\in X_0$, as desired. 

    For the inductive step, let $n>0$. For each $x,x'\in X_0$, we have the commutative diagram 
\[
\begin{tikzpicture}[baseline=-\the\dimexpr\fontdimen22\textfont2\relax ]
      \node(00) at (0,1) {$X(x,x')$};
      \node(01) at (3,1) {$X'(ix,ix')$};
      \node(10) at (0,-1) {$Y(fx,fx')$};
      \node(11) at (3,-1) {$Y'(f'ix,f'ix')$};
      \node(02) at (6,1) {$X(x,x')$};
      \node(12) at (6,-1) {$Y(fx,fx')$};
      
      \draw [->] (00) to node[auto, swap, labelsize] {$i_{x,x'}$} (01); 
      \draw [->] (01) to node[auto, labelsize] {$f'_{ix,ix'}$} (11); 
      \draw [->] (00) to node[auto,swap,labelsize] {$f_{x,x'}$} (10); 
      \draw [->] (10) to node[auto,labelsize] {$j_{fx,fx'}$} (11);  
      \draw [->] (01) to node[auto, swap, labelsize] {$p_{ix,ix'}$} (02); 
      \draw [->] (11) to node[auto, labelsize] {$q_{f'ix,f'ix'}$} (12); 
      \draw [->] (02) to node[auto,labelsize] {$f_{x,x'}$} (12); 
      \draw [->, bend left=25] (00) to node[auto, labelsize] {$1_{X(x,x')}$} (02); 
      \draw [->, bend right=25] (10) to node[auto, swap, labelsize] {$1_{Y(fx,fx')}$} (12); 
\end{tikzpicture}
\]
    of globular maps between weak $\omega$-categories. Since $f'_{ix,ix'}$ is essentially $(n-1)$-surjective, so is $f_{x,x'}$, by the inductive hypothesis.
\end{proof}

\begin{proposition}[{Cf.~\cite[Lemma~4.12]{Lafont_Metayer_Worytkiewicz_folk_model_str_omega_cat} and \cite[Proposition~20.1.19]{Ara_etal_polygraphs}}]
    The class of $\omega$-weak equivalences is closed under filtered colimits in the arrow category $\WkCats{\omega}^\mathbf{2}$ and transfinite compositions in $\WkCats{\omega}$.
\end{proposition}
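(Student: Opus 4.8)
The plan is to treat the two closure properties in turn, reducing transfinite composition to the filtered-colimit case via the composition closure already contained in \cref{thm:2-out-of-3}.

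\emph{Filtered colimits.} The key structural fact is that the forgetful functor $\WkCats{\omega}\to\GSet$ creates filtered colimits. Indeed, $L$ preserves filtered colimits by \cref{L-is-finitary} (hence so does $L^2$), so the Eilenberg--Moore category $\WkCats{\omega}=\GSet^L$ has filtered colimits, and they are computed on underlying globular sets, where in turn they are computed pointwise. Thus for a filtered diagram $(f_i\colon X_i\to Y_i)_{i\in\cat I}$ of $\omega$-weak equivalences with colimit $f\colon X\to Y$ in $\WkCats{\omega}^{\mathbf 2}$, the map $f$ is automatically a strict $\omega$-functor and satisfies $X_n=\operatorname{colim}_i(X_i)_n$ and $Y_n=\operatorname{colim}_i(Y_i)_n$ for each $n$. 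It remains to check that $f$ is essentially $\omega$-surjective, and here I would use the explicit dimensionwise characterisation of essential $n$-surjectivity. For essential $0$-surjectivity, any $y\in Y_0$ is represented by some $y_i\in(Y_i)_0$; choosing $x_i\in(X_i)_0$ with $f_ix_i\sim y_i$ and pushing the witnessing invertible $1$-cell along the (strict $\omega$-functorial, hence invertibility-preserving by \cref{prop:strict-omega-functor-preserves-invertible-cells}) colimit injection $Y_i\to Y$ yields $fx\sim y$. For $k\ge1$, given a parallel pair $(u,v)$ of $(k-1)$-cells of $X$ and a $k$-cell $w\colon fu\to fv$ of $Y$, I would pass to an index $j$ large enough that $u,v,w$ are all represented by cells $u_j,v_j,w_j$ \emph{and} the finitely many equations relating them already hold at stage $j$: namely the parallelism of $(u_j,v_j)$ together with the boundary conditions $s_{k-1}(w_j)=f_ju_j$ and $t_{k-1}(w_j)=f_jv_j$. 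Essential $k$-surjectivity of $f_j$ then yields $\overline{w_j}\colon u_j\to v_j$ with $f_j\overline{w_j}\sim w_j$, and pushing forward to the colimit produces $\overline w\colon u\to v$ with $f\overline w\sim w$. Hence $f$ is essentially $n$-surjective for all $n$, i.e., an $\omega$-weak equivalence.

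\emph{Transfinite compositions.} Given a continuous $\lambda$-sequence whose successor maps are $\omega$-weak equivalences, write $X_\lambda:=\operatorname{colim}_{\beta<\lambda}X_\beta$ for the transfinite composite. I would show by transfinite induction that the transition map $X_0\to X_\beta$ is an $\omega$-weak equivalence for every $\beta\le\lambda$. The base case $\beta=0$ is the identity, which is plainly essentially $\omega$-surjective. For a successor $\beta+1$, the map $X_0\to X_{\beta+1}$ factors as $X_0\to X_\beta\to X_{\beta+1}$, a composite of two $\omega$-weak equivalences, hence an $\omega$-weak equivalence by \cref{thm:2-out-of-3} (this is the composition closure \ref{item:A}). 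For a limit ordinal $\beta$ (including $\beta=\lambda$), continuity gives $X_\beta=\operatorname{colim}_{\gamma<\beta}X_\gamma$, and the arrows $(X_0\to X_\gamma)_{\gamma<\beta}$ assemble, via the commuting squares with components $(1_{X_0},X_\gamma\to X_{\gamma'})$, into a filtered diagram in $\WkCats{\omega}^{\mathbf 2}$ whose colimit is exactly $X_0\to X_\beta$ (the domain diagram being constant at $X_0$ over the connected index $\beta$). By the induction hypothesis every object of this diagram is an $\omega$-weak equivalence, so the filtered-colimit case already settled shows $X_0\to X_\beta$ is one as well.

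The main obstacle is the compactness step in the filtered-colimit case: one must be careful to pass to a \emph{single} index large enough to realise simultaneously the chosen cells and all the equations relating them (the parallelism of $(u_j,v_j)$ and the source/target conditions on $w_j$), rather than merely realising the cells one at a time. This is precisely where the finitariness of $L$ (\cref{L-is-finitary}) enters, and it is essentially the only nonformal ingredient in the argument.
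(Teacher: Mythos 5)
Your proof is correct and takes essentially the same approach as the paper: the paper's own proof simply observes that the argument of Ara et al.\ (Proposition~20.1.19) for strict $\omega$-categories carries over verbatim once one knows the forgetful functor $\WkCats{\omega}\to\GSet$ preserves filtered colimits, i.e., that $L$ is finitary (\cref{L-is-finitary}). Your write-up just spells out that cited argument in full — pointwise computation of filtered colimits on underlying globular sets, the compactness step for essential $n$-surjectivity, and the reduction of transfinite composition to the filtered-colimit case via \ref{item:A} and \cref{thm:2-out-of-3}.
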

\begin{proof}
    The proof is almost identical to that of \cite[Proposition~20.1.19]{Ara_etal_polygraphs}.
    The only thing we must check is that the forgetful functor $\WkCats{\omega} \to \GSet$ preserves filtered colimits. This follows from the fact that the functor $L \colon \GSet \to \GSet$ preserves filtered colimits, which we established in \cref{L-is-finitary}.
\end{proof}

\section{Weak \texorpdfstring{$\omega$}{ω}-weak equivalences}\label{sec:weak-weak}
Whereas the notion of $\omega$-weak equivalence we have studied so far is natural from the viewpoint of model category theory (cf.~\cite{Lafont_Metayer_Worytkiewicz_folk_model_str_omega_cat}), the requirement that an $\omega$-weak equivalence should be a \emph{strict} $\omega$-functor is too restrictive as a fully general notion of ``equivalence'' between weak $\omega$-categories, i.e., as a suitable higher-dimensional analogue of a biequivalence between bicategories or a triequivalence between tricategories. 
In this section, we define and study a more general class of \emph{weak $\omega$-weak equivalences}, which are \emph{weak $\omega$-functors} \cite{Garner_homomorphisms} with suitable essential $\omega$-surjectivity. 
In particular, we show that they also satisfy the 2-out-of-3 property.
For the sake of clarity, in this section we call $\omega$-weak equivalences in the sense of \cref{def:omega-weak-eq} \emph{strict $\omega$-weak equivalences}.

\subsection{Weak \texorpdfstring{$\omega$}{ω}-functors}
\label{subsec:weak-omega-functors}
We first recall the definition of weak $\omega$-functor in the sense of Garner \cite{Garner_homomorphisms}, which is based on the theory of \emph{algebraic weak factorisation systems} (\textsc{awfs}) \cite{Garner_understanding,Bourke_Garner_1,Bourke_Garner_2}. However, since this definition depends only on a small portion of the data of \textsc{awfs}, here we give a direct definition without introducing the full structure of \textsc{awfs}. 
The following explanation is largely self-contained; the only serious exception is \cref{prop:loc-pres-ATF-initial}, where the existence of a certain left adjoint is derived via the theory of \textsc{awfs}.

\begin{definition}
\label{def:alg-triv-fib}
	Let $\mathbf{C}$ be a category and $\mathcal{I}=(\iota_i)_{i\in I}$ be an indexed family of morphisms in $\mathbf{C}$.

\begin{enumerate}
    \item A \emph{trivial fibration} with respect to $\mathcal{I}$ is a morphism in $\mathbf{C}$ that has the right lifting property against $\iota_i$ for each $i\in I$.
    \item An \emph{algebraic trivial fibration} with respect to $\mathcal{I}$ is a morphism $f\colon X \to Y$ in $\mathbf{C}$ equipped with an $\mathcal{I}$\emph{-lifting operation}
	$\overline\kappa$ (cf.~\cite[Proposition~3.8]{Garner_understanding}); the latter is a function which assigns to each commutative square with $i\in I$ as in the solid part below, a morphism $\overline \kappa(i;u,v)$ making both triangles commutative:
\begin{equation*}
\begin{tikzpicture}[baseline=-\the\dimexpr\fontdimen22\textfont2\relax ]
      \node(00) at (0,1) {$\bullet$};
      \node(01) at (2,1) {$X$};
      \node(10) at (0,-1) {$\bullet$};
      \node(11) at (2,-1) {$Y$.};
      
      \draw [->] (00) to node[auto, labelsize] {$u$} (01); 
      \draw [->] (01) to node[auto, labelsize] {$f$} (11); 
      \draw [->] (00) to node[auto,swap,labelsize] {$\iota_i$} (10); 
      \draw [->] (10) to node[auto,swap,labelsize] {$v$} (11);   
      \draw [->, dashed] (10) to node[midway,fill=white,labelsize] {$\overline \kappa(i;u,v)$} (01);
\end{tikzpicture}
\end{equation*}
	Given algebraic trivial fibrations $(f\colon X\to Y,\overline \kappa)$ and $(g\colon Z\to W, \overline \lambda)$, a \emph{morphism of algebraic trivial fibrations}
	from the former to the latter is a morphism $(h\colon X\to Z,k\colon Y\to W)\colon f\to g$ in the arrow category
	$\mathbf{C}^\mathbf{2}$ such that, for any $i\in I$ and any $(u,v)\colon \iota_i\to f$ in $\mathbf{C}^\mathbf{2}$,
	we have $h\circ \overline \kappa (i;u,v)=\overline \lambda (i;h\circ u,k\circ v)$.
\begin{equation*}
\begin{tikzpicture}[baseline=-\the\dimexpr\fontdimen22\textfont2\relax ]
      \node(00) at (0,2) {$\bullet$};
      \node(01) at (2,2) {$X$};
      \node(10) at (0,0) {$\bullet$};
      \node(11) at (2,0) {$Y$};
      \node(02) at (5,2) {$Z$};
      \node(12) at (5,0) {$W$};
      
      \draw [->] (00) to node[auto, labelsize] {$u$} (01); 
      \draw [->] (01) to node[auto, labelsize] {$h$} (02); 
      \draw [->] (01) to node[auto,near start, labelsize] {$f$} (11); 
      \draw [->] (02) to node[auto, labelsize] {$g$} (12); 
      \draw [->] (11) to node[auto,swap,labelsize] {$k$} (12);  
      \draw [->] (00) to node[auto,swap,labelsize] {$\iota_i$} (10); 
      \draw [->] (10) to node[auto,swap,labelsize] {$v$} (11);   
      \draw [->] (10) to node[midway,fill=white,labelsize] {$\overline \kappa(i;u,v)$} (01);
      \draw [->] (10) to node[near end,fill=white,labelsize] {$\overline \lambda(i;h\circ u,k\circ v)$} (02);
\end{tikzpicture}
\end{equation*}
	We write $\ATF(\mathbf{C};\mathcal{I})$ for the category of algebraic trivial fibrations and morphisms between them.
    Note that we have the functor $\mathrm{cod}\colon \ATF(\mathbf{C};\mathcal{I})\to\mathbf{C}$ sending an object $(f\colon X\to Y,\overline \kappa)$ to $Y$ and a morphism $(h,k)\colon (f,\overline \kappa)\to (g,\overline \lambda)$ to $k$. 
    \item Now suppose that the functor $\mathrm{cod}\colon \ATF(\mathbf{C};\mathcal{I})\to\mathbf{C}$ has a \emph{left adjoint right inverse} (\emph{lari}), i.e., a left adjoint with identity unit. 
    Explicitly, this means the following: for any object $X\in\mathbf{C}$, there exists an algebraic trivial fibration $(\varepsilon_X\colon QX\to X,\overline\kappa_X)$ over $X$ such that, for any algebraic trivial fibration $(g\colon Z\to W,\overline \lambda)$ and any morphism $k\colon X\to W$ in $\mathbf{C}$, there exists a unique morphism $h\colon QX\to Z$ in $\mathbf{C}$ such that $(h, k)$ is a morphism of algebraic trivial fibrations $(\varepsilon_X,\overline\kappa_X)\to (g,\overline\lambda)$. 
    (This is equivalent to the condition that the unique morphism in the slice 2-category $\mathbf{CAT}/\mathbf{C}$ from the object $\cod\colon \ATF(\mathbf{C};\mathcal{I})\to \mathbf{C}$ to the terminal object $1_\mathbf{C}\colon \mathbf{C}\to \mathbf{C}$ has a left adjoint \emph{in} $\mathbf{CAT}/\mathbf{C}$.)
    Under this situation, we can make the following definitions. 
    \begin{itemize}
        \item For each $X\in\mathbf{C}$,
        let $\delta_X\colon QX\to Q^2X$ $\bigl(=Q(QX)\bigr)$ be the unique morphism in $\mathbf{C}$ such that
        \begin{equation}
\begin{tikzpicture}[baseline=-\the\dimexpr\fontdimen22\textfont2\relax ]
      \node(01) at (0,0.75) {$QX$};
      \node(11) at (0,-0.75) {$X$};
      \node(02) at (1.5,0.75) {$Q^2X$};
      \node(12) at (1.5,-0.75) {$X$};
      \draw [->] (01) to node[auto, labelsize] {$\delta_X$} (02); 
      \draw [->] (01) to node[auto,swap, labelsize] {$(\varepsilon_X,\overline\kappa_X)$} (11); 
      \draw [->] (02) to node[auto, labelsize] {$(\varepsilon_X\circ\varepsilon_{QX},\overline\lambda_X)$} (12); 
      \draw [->] (11) to node[auto,swap,labelsize] {$1_X$} (12);
\end{tikzpicture}\tag{D}\label{eqn:delta}
\end{equation}
        is a morphism of algebraic trivial fibrations, where $\overline\lambda_X$ is the $\mathcal{I}$-lifting operation on $\varepsilon_X\circ\varepsilon_{QX}$ defined by $\overline\lambda_X(i;u,v)=\overline\kappa_{QX}\bigl(i;u,\overline\kappa_X(i;\varepsilon_{QX}\circ u, v)\bigr)$
        for each $i\in I$ and $(u,v)\colon \iota_i \to \varepsilon_X\circ\varepsilon_{QX}$ in $\mathbf{C}^\mathbf{2}$ (depicted below).
        \begin{equation}
        \label{eqn:vertical-composition}
        \begin{tikzpicture}[baseline=-\the\dimexpr\fontdimen22\textfont2\relax ]
              \node(00) at (0,1.5) {$\bullet$};
              \node(01) at (4,1.5) {$Q^2X$};
              \node(10) at (0,-1.5) {$\bullet$};
              \node(11) at (4,-1.5) {$X$};
              \node(12) at (4,0) {$QX$};
              
              \draw [->] (00) to node[auto, labelsize] {$u$} (01); 
              \draw [->] (01) to node[auto, labelsize] {$\varepsilon_{QX}$} (12); 
              \draw [->] (12) to node[auto, labelsize] {$\varepsilon_X$} (11); 
              \draw [->] (00) to node[auto,swap,labelsize] {$\iota_i$} (10); 
              \draw [->] (10) to node[auto,swap,labelsize] {$v$} (11);   
              \draw [->] (10) to (12);
              \draw [->] (10) to (01);    
              \node[fill=white,labelsize]  at (2.5,-0.8) {$\overline \kappa_X(i;\varepsilon_{QX}\circ u,v)$};
              \node[fill=white,labelsize]  at (2,0.5) {$\overline \kappa_{QX}\bigl(i;u,\overline \kappa_X(\varepsilon_{QX}\circ u,v)\bigr)$};
        \end{tikzpicture}
        \end{equation}
      \item For each morphism $f\colon X\to Y$ in $\mathbf{C}$, let $Qf\colon QX\to QY$ be the unique morphism in $\mathbf{C}$ such that
      \begin{equation}
    \begin{tikzpicture}[baseline=-\the\dimexpr\fontdimen22\textfont2\relax ]
          \node(01) at (0,0.75) {$QX$};
          \node(11) at (0,-0.75) {$X$};
          \node(02) at (1.5,0.75) {$QY$};
          \node(12) at (1.5,-0.75) {$Y$};
          \draw [->] (01) to node[auto, labelsize] {$Qf$} (02); 
          \draw [->] (01) to node[auto,swap, labelsize] {$(\varepsilon_X,\overline\kappa_X)$} (11); 
          \draw [->] (02) to node[auto, labelsize] {$(\varepsilon_Y,\overline \kappa_Y)$} (12); 
          \draw [->] (11) to node[auto,swap,labelsize] {$f$} (12);
    \end{tikzpicture}\tag{Q}\label{eqn:Qf}
    \end{equation}
    is a morphism of algebraic trivial fibrations. \qedhere
    \end{itemize}
\end{enumerate}
\end{definition}

\begin{remark}
\label{rmk:fibres-of-ATF}
    Let $\mathbf{C}$ be a category and $\mathcal{I}=(\iota_i)_{i\in I}$ be an indexed family of morphisms in $\mathbf{C}$. 
    For each $X\in\mathbf{C}$, let $\ATF(\mathbf{C};\mathcal{I})_X$ be the fibre over $X$ of the functor $\mathrm{cod}\colon \ATF(\mathbf{C};\mathcal{I})\to\mathbf{C}$, i.e., the pullback
     \begin{equation*}
\begin{tikzpicture}[baseline=-\the\dimexpr\fontdimen22\textfont2\relax ]
      \node(00) at (0,1) {$\ATF(\mathbf{C};\mathcal{I})_X$};
      \node(01) at (3,1) {$\ATF(\mathbf{C};\mathcal{I})$};
      \node(10) at (0,-1) {$1$};
      \node(11) at (3,-1) {$\mathbf{C}$};
      \draw  (0.3,0.5) to (0.5,0.5) to (0.5,0.7); 
      \draw [->] (00) to node[auto, labelsize] {} (01); 
      \draw [->] (01) to node[auto, labelsize] {$\mathrm{cod}$} (11); 
      \draw [->] (00) to node[auto,swap,labelsize] {} (10); 
      \draw [->] (10) to node[auto,swap,labelsize] {$X$} (11);
\end{tikzpicture}
\end{equation*}
    in $\mathbf{CAT}$. Notice that the category $\ATF(\mathbf{C};\mathcal{I})_X$ always has the terminal object $(1_X\colon X\to X,\overline\omega_X)$, where $\overline\omega_X$ is the (unique) $\mathcal{I}$-lifting operation on $1_X$ defined by $\overline\omega_X(i;u,v)=v$. 
    For any $(f,\overline\kappa) \in \ATF(\mathbf{C};\mathcal{I})_X$, the unique map is given by
    \begin{equation}
    \begin{tikzpicture}[baseline=-\the\dimexpr\fontdimen22\textfont2\relax ]
          \node(01) at (0,0.75) {$\bullet$};
          \node(11) at (0,-0.75) {$X$};
          \node(02) at (1.5,0.75) {$X$};
          \node(12) at (1.5,-0.75) {$X$.};
          \draw [->] (01) to node[auto, labelsize] {$f$} (02); 
          \draw [->] (01) to node[auto,swap, labelsize] {$(f,\overline\kappa)$} (11); 
          \draw [->] (02) to node[auto, labelsize] {$(1_X,\overline\omega_X)$} (12); 
          \draw [->] (11) to node[auto,swap,labelsize] {$1_X$} (12);
    \end{tikzpicture}\tag{I}\label{eqn:terminal}
    \end{equation}

    If $\mathrm{cod}\colon \ATF(\mathbf{C};\mathcal{I})\to\mathbf{C}$ has a lari, then the value $(\varepsilon_X\colon QX\to X,\overline\kappa_X)$ of the lari at $X\in\mathbf{C}$ is an initial object in $\ATF(\mathbf{C};\mathcal{I})_X$. 
    The converse holds if the functor $\mathrm{cod}\colon \ATF(\mathbf{C};\mathcal{I})\to\mathbf{C}$ is a (Grothendieck) fibration; that is, the existence of an initial object in each fibre $\ATF(\mathbf{C};\mathcal{I})_X$ implies that $\mathrm{cod}\colon \ATF(\mathbf{C};\mathcal{I})\to\mathbf{C}$ has a lari in that case. 
    A sufficient condition for $\mathrm{cod}\colon \ATF(\mathbf{C};\mathcal{I})\to\mathbf{C}$ to be a fibration is that $\mathbf{C}$ has pullbacks along trivial fibrations with respect to $\mathcal{I}$. 
\end{remark}

\begin{proposition}
\label{prop:univ-cofib-repl-comonad}
    In the situation of (3) of \cref{def:alg-triv-fib}, the data $(Q,\varepsilon,\delta)$ give rise to a comonad on $\mathbf{C}$. 
\end{proposition}
\begin{proof}
    In short, one can verify the functoriality of $Q$, the naturality of $\varepsilon$ and $\delta$, and the comonad axioms, using the universality of $(\varepsilon_X,\overline\kappa_X)$. 

    Below we give more details. The functoriality of $Q$ and the naturality of $\varepsilon$ are immediate from the definitions. To show the rest, 
    the following observation (mentioned in \cite[Section~5.2]{Bourke_Garner_1}) will be useful:
    there exists a (strict) double category $\AATF(\mathbf{C};\mathcal{I})$ whose objects are the objects of $\mathbf{C}$, whose vertical morphisms are the algebraic trivial fibrations with respect to $\mathcal{I}$, whose horizontal morphisms are the morphisms of $\mathbf{C}$, and for each configuration
        \begin{equation}
        \label{eqn:booundary-in-AATF}
\begin{tikzpicture}[baseline=-\the\dimexpr\fontdimen22\textfont2\relax ]
      \node(01) at (0,0.75) {$X$};
      \node(11) at (0,-0.75) {$Y$};
      \node(02) at (1.5,0.75) {$Z$};
      \node(12) at (1.5,-0.75) {$W$};
      \draw [->] (01) to node[auto, labelsize] {$h$} (02); 
      \draw [->] (01) to node[auto,swap, labelsize] {$(f,\overline\kappa)$} (11); 
      \draw [->] (02) to node[auto, labelsize] {$(g,\overline\lambda)$} (12); 
      \draw [->] (11) to node[auto,swap,labelsize] {$k$} (12);
\end{tikzpicture}
\end{equation}
    in $\AATF(\mathbf{C};\mathcal{I})$, if $(h,k)$ is a morphism $(f,\overline\kappa)\to (g,\overline\lambda)$ in $\ATF(\mathbf{C}; \mathcal{I})$, then there exists precisely one square in $\AATF(\mathbf{C};\mathcal{I})$ having \cref{eqn:booundary-in-AATF} as the boundary, and otherwise, there exists no such square. The $\mathcal{I}$-lifting operation on the composite of vertical morphisms is defined as in \cref{eqn:vertical-composition}. 
    
    To show the naturality of $\delta$, it suffices to show the following: for any morphism $f\colon X\to Y$ in $\mathbf{C}$, both $(\delta_Y\circ Qf,f)$ and $(Q^2f\circ \delta_X,f)$ are morphisms of algebraic trivial fibrations $(\varepsilon_X,\overline\kappa_X) \to (\varepsilon_Y\circ \varepsilon_{QY},\overline\lambda_Y)$.
    This can be done by forming the following pasting composites in $\AATF(\mathbf{C};\mathcal{I})$: 
        \begin{equation*}
        \begin{tikzpicture}[baseline=-\the\dimexpr\fontdimen22\textfont2\relax ]
            \node (00) at (0,0.75) {$QX$};
            \node (10) at (2,0.75) {$QY$};
            \node (20) at (4,0.75) {$Q^2Y$};
            \node (01) at (0,-0.75) {$X$};
            \node (11) at (2,-0.75) {$Y$};
            \node (21) at (4,-0.75) {$Y$};
            \draw[->] (00) to node[auto,labelsize] {$Qf$} (10);
            \draw[->] (10) to node[auto,labelsize] {$\delta_Y$} (20);
            \draw[->] (01) to node[auto,swap,labelsize] {$f$} (11);
            \draw[->] (11) to node[auto,swap,labelsize] {$1_Y$} (21);
            \draw[->] (00) to node[auto,swap,labelsize] {$(\varepsilon_X,\overline\kappa_X)$} (01);
            \draw[->] (10) to node[midway,fill=white,labelsize] {$(\varepsilon_Y,\overline\kappa_Y)$} (11);
            \draw[->] (20) to node[auto,labelsize] {$(\varepsilon_Y\circ \varepsilon_{QY},\overline\lambda_Y)$} (21);
            \node at (1,0) {(\ref{eqn:Qf})};
            \node at (3,0) {(\ref{eqn:delta})};
        \end{tikzpicture}
        \qquad
        \begin{tikzpicture}[baseline=-\the\dimexpr\fontdimen22\textfont2\relax ]
              \node(00) at (0,1.5)   {$QX$};
              \node(02) at (0,-1.5)  {$X$};
              \node(10) at (2,1.5) {$Q^2X$};
              \node(11) at (2,0)   {$QX$};
              \node(12) at (2,-1.5){$X$};
              \node(20) at (4,1.5)   {$Q^2Y$};
              \node(21) at (4,0)     {$QY$};
              \node(22) at (4,-1.5)  {$Y$.};
              \draw [->] (00) to node[auto,swap, labelsize] {$(\varepsilon_X,\overline\kappa_X)$} (02); 
              \draw [->] (10) to node[midway,fill=white, labelsize] {$(\varepsilon_{QX},\overline\kappa_{QX})$} (11); 
              \draw [->] (11) to node[midway,fill=white, labelsize] {$(\varepsilon_{X},\overline\kappa_{X})$} (12); 
              \draw [->] (20) to node[auto, labelsize] {$(\varepsilon_{QY},\overline\kappa_{QY})$} (21); 
              \draw [->] (21) to node[auto, labelsize] {$(\varepsilon_{Y},\overline\kappa_{Y})$} (22); 
              \draw [->] (00) to node[auto, labelsize] {$\delta_X$} (10); 
              \draw [->] (10) to node[auto, labelsize] {$Q^2f$} (20); 
              \draw [->] (11) to node[auto, labelsize] {$Qf$} (21); 
              \draw [->] (02) to node[auto,swap, labelsize] {$1_X$} (12); 
              \draw [->] (12) to node[auto,swap, labelsize] {$f$} (22);
              \node at (1,0) {(\ref{eqn:delta})};
              \node at (3,0.75) {(\ref{eqn:Qf})};
              \node at (3,-0.75) {(\ref{eqn:Qf})};
        \end{tikzpicture}
        \end{equation*}
    
    For the axiom $\varepsilon_{QX}\circ \delta_X=1_{QX}= Q\varepsilon_X\circ \delta_X$, it suffices to show that both $(\varepsilon_{QX},1_X)$ and $(Q\varepsilon_X,1_X)$ are morphisms $(\varepsilon_X\circ \varepsilon_{QX},\overline\lambda_X)\to (\varepsilon_X,\overline\kappa_X)$ in $\ATF(\mathbf{C};\mathcal{I})$ (then $(\varepsilon_{QX}\circ \delta_X,1_X)$, $(1_{QX},1_X)$, and $(Q\varepsilon_X\circ \delta_X,1_X)$ are parallel morphisms with domain $(\varepsilon_{X},\overline\kappa_X)$ over the same morphism $1_X$,
    and hence have to coincide).
    These can be seen by the following composites in $\AATF(\mathbf{C};\mathcal{I})$: 
    \begin{equation*}
    \begin{tikzpicture}[baseline=-\the\dimexpr\fontdimen22\textfont2\relax ]
              \node(10) at (1.5,1.5) {$Q^2X$};
              \node(11) at (1.5,0)   {$QX$};
              \node(12) at (1.5,-1.5){$X$};
              \node(20) at (3,1.5)   {$QX$};
              \node(21) at (3,0)     {$QX$};
              \node(22) at (3,-1.5)  {$X$};
              \draw [->] (10) to node[auto,swap, labelsize] {$(\varepsilon_{QX},\overline\kappa_{QX})$} (11); 
              \draw [->] (11) to node[auto,swap, labelsize] {$(\varepsilon_{X},\overline\kappa_{X})$} (12); 
              \draw [->] (20) to node[auto, labelsize] {$(1_{QX},\overline\omega_{QX})$} (21); 
              \draw [->] (21) to node[auto, labelsize] {$(\varepsilon_{X},\overline\kappa_{X})$} (22); 
              \draw [->] (10) to node[auto, labelsize] {$\varepsilon_{QX}$} (20); 
              \draw [->] (11) to node[auto, labelsize] {$1_{QX}$} (21); 
              \draw [->] (12) to node[auto,swap, labelsize] {$1_X$} (22); 
              \node at (2.25,0.75) {(\ref{eqn:terminal})};
              \node at (2.25,-0.75) {$=$};
        \end{tikzpicture}
        \qquad
            \begin{tikzpicture}[baseline=-\the\dimexpr\fontdimen22\textfont2\relax ]
              \node(10) at (1.5,1.5) {$Q^2X$};
              \node(11) at (1.5,0)   {$QX$};
              \node(12) at (1.5,-1.5){$X$};
              \node(20) at (3,1.5)   {$QX$};
              \node(21) at (3,0)     {$X$};
              \node(22) at (3,-1.5)  {$X$};
              \draw [->] (10) to node[auto,swap, labelsize] {$(\varepsilon_{QX},\overline\kappa_{QX})$} (11); 
              \draw [->] (11) to node[auto,swap, labelsize] {$(\varepsilon_{X},\overline\kappa_{X})$} (12); 
              \draw [->] (20) to node[auto, labelsize] {$(\varepsilon_{X},\overline\kappa_{X})$} (21); 
              \draw [->] (21) to node[auto, labelsize] {$(1_{X},\overline\omega_{X})$} (22); 
              \draw [->] (10) to node[auto, labelsize] {$Q\varepsilon_{X}$} (20); 
              \draw [->] (11) to node[auto, labelsize] {$\varepsilon_{X}$} (21); 
              \draw [->] (12) to node[auto,swap, labelsize] {$1_X$} (22); 
              \node at (2.25,0.75) {(\ref{eqn:Qf})};
              \node at (2.25,-0.75) {(\ref{eqn:terminal})};
        \end{tikzpicture}
        \end{equation*}
        
    Finally, for the axiom $\delta_{QX}\circ\delta_X=Q\delta_X\circ\delta_X$, it suffices to show that $(\delta_{QX},1_X)$ and $(Q\delta_X,1_X)$ are parallel morphisms in $\ATF(\mathbf{C};\mathcal{I})$, which can be seen as follows: 
\begin{equation*}
       \begin{tikzpicture}[baseline=-\the\dimexpr\fontdimen22\textfont2\relax ]
              \node(10) at (1.5,4.5) {$Q^2X$};
              \node(12) at (1.5,1.5){$QX$};
              \node(13) at (1.5,0){$X$};
              \node(20) at (3,4.5)   {$Q^3X$};
              \node(21) at (3,3)   {$Q^2X$};
              \node(22) at (3,1.5)  {$QX$};
              \node(23) at (3,0)  {$X$};
              \draw [->] (10) to node[auto,swap, labelsize] {$(\varepsilon_{QX},\overline\kappa_{QX})$} (12);  
              \draw [->] (12) to node[auto,swap, labelsize] {$(\varepsilon_{X},\overline\kappa_{X})$} (13); 
              \draw [->] (20) to node[auto, labelsize] {$(\varepsilon_{Q^2X},\overline\kappa_{Q^2X})$} (21); 
              \draw [->] (21) to node[auto, labelsize] {$(\varepsilon_{QX},\overline\kappa_{QX})$} (22); 
              \draw [->] (22) to node[auto, labelsize] {$(\varepsilon_{X},\overline\kappa_{X})$} (23); 
              \draw [->] (10) to node[auto, labelsize] {$\delta_{QX}$} (20); 
              \draw [->] (12) to node[auto, labelsize] {$1_{QX}$} (22); 
              \draw [->] (13) to node[auto,swap, labelsize] {$1_X$} (23); 
              \node at (2.25,3) {(\ref{eqn:delta})};
              \node at (2.25,0.75) {$=$};
        \end{tikzpicture}
        \qquad
              \begin{tikzpicture}[baseline=-\the\dimexpr\fontdimen22\textfont2\relax ]
              \node(10) at (1.5,4.5) {$Q^2X$};
              \node(11) at (1.5,3) {$QX$};
              \node(13) at (1.5,0){$X$};
              \node(20) at (3,4.5)   {$Q^3X$};
              \node(21) at (3,3)   {$Q^2X$};
              \node(22) at (3,1.5)  {$QX$};
              \node(23) at (3,0)  {$X$};
              \draw [->] (10) to node[auto,swap, labelsize] {$(\varepsilon_{QX},\overline\kappa_{QX})$} (11); 
              \draw [->] (11) to node[auto,swap, labelsize] {$(\varepsilon_{X},\overline\kappa_{X})$} (13);  
              \draw [->] (20) to node[auto, labelsize] {$(\varepsilon_{Q^2X},\overline\kappa_{Q^2X})$} (21); 
              \draw [->] (21) to node[auto, labelsize] {$(\varepsilon_{QX},\overline\kappa_{QX})$} (22); 
              \draw [->] (22) to node[auto, labelsize] {$(\varepsilon_{X},\overline\kappa_{X})$} (23); 
              \draw [->] (10) to node[auto, labelsize] {$Q\delta_{X}$} (20); 
              \draw [->] (11) to node[auto, labelsize] {$\delta_{X}$} (21); 
              \draw [->] (13) to node[auto,swap, labelsize] {$1_X$} (23); 
              \node at (2.25,3.75) {(\ref{eqn:Qf})};
              \node at (2.25,1.5) {(\ref{eqn:delta})};
        \end{tikzpicture}\qedhere
\end{equation*}
\end{proof}

A sufficient condition for $\cod\colon \ATF(\mathbf{C};\mathcal{I})\to \mathbf{C}$ to have a lari (and hence for the comonad $Q$ as in \cref{prop:univ-cofib-repl-comonad} to exist) is given by the following.

\begin{proposition}
\label{prop:loc-pres-ATF-initial}
    Let $\mathbf{C}$ be a locally presentable category and $\mathcal{I}=(\iota_i)_{i\in I}$ be an indexed family of morphisms in $\mathbf{C}$ whose indexing set $I$ is small. Then the functor $\cod\colon \ATF(\mathbf{C};\mathcal{I})\to \mathbf{C}$ has a lari. 
\end{proposition}
\begin{proof}
	We may apply the algebraic small object argument (see \cite[Theorem~4.4]{Garner_understanding} or \cite[Proposition~16]{Bourke_Garner_1}) to $\mathcal{I}$ and obtain
	an \textsc{awfs} $(\mathsf{L},\mathsf{R})$ on $\mathbf{C}$.
	Then $(\mathsf{L},\mathsf{R})$ is \emph{cofibrantly generated} by $\mathcal{I}$ in the sense of \cite[Section~5.2]{Bourke_Garner_1} (or \emph{algebraically-free} on $\mathcal{I}$ in the sense of \cite[Definition~3.9]{Garner_understanding}),
	which means that a certain canonical functor $K\colon \mathsf{R}\mhyphen\mathbf{Alg}\to \ATF(\mathbf{C};\mathcal{I})$, making the triangle
 \[
\begin{tikzpicture}[baseline=-\the\dimexpr\fontdimen22\textfont2\relax ]
      \node(00) at (0,1) {$\mathsf{R}\mhyphen\mathbf{Alg}$};
      \node(01) at (2,-1) {$\mathbf{C^2}$};
      \node(10) at (4,1) {$\ATF(\mathbf{C};\mathcal{I})$};
      
      \draw [->] (00) to node[auto,swap, labelsize] {forgetful} (01);  
      \draw [->] (00) to node[auto,labelsize] {$K$} (10); 
      \draw [->] (10) to node[auto,labelsize] {forgetful} (01);   
\end{tikzpicture}
\]
    commute, is an isomorphism of categories; here, $\mathsf{R}\mhyphen\mathbf{Alg}$ is the Eilenberg--Moore category of the monad $\mathsf{R}$ on $\mathbf{C}^\mathbf{2}$. Hence it suffices to show that the composite of the forgetful functor $\mathsf{R}\mhyphen\mathbf{Alg}\to \mathbf{C^2}$ and $\cod\colon\mathbf{C^2}\to\mathbf{C}$ admits a lari, which follows from the definition of \textsc{awfs}; the lari sends an object $X$ of $\mathbf{C}$ to the free $\mathsf{R}$-algebra on the object $!_X\colon \emptyset\to X$ in $\mathbf{C^2}$, where $\emptyset$ is the initial object of $\mathbf{C}$.
\end{proof}

\begin{remark}
    In \cite{Garner_homomorphisms,Bourke_Garner_2}, the \emph{cofibrant replacement comonad} associated to an \textsc{awfs} $(\mathsf{L},\mathsf{R})$ on a category $\mathbf{C}$ with an initial object $\emptyset$ is defined by restricting the comonad $\mathsf{L}$ on $\mathbf{C^2}$ to $\emptyset/\mathbf{C}\cong \mathbf{C}$. 
    Under the situation of \cref{prop:loc-pres-ATF-initial}, the comonad $(Q,\varepsilon,\delta)$ induced from $\mathcal{I}$ as in \cref{prop:univ-cofib-repl-comonad} agrees with the cofibrant replacement comonad $(Q',\varepsilon',\delta')$ of the \textsc{awfs} $(\mathsf{L},\mathsf{R})$ cofibrantly generated by $\mathcal{I}$ (see the proof of \cref{prop:loc-pres-ATF-initial}).

    To see this, first observe that $\varepsilon'_X$ is obtained by applying the monad $\mathsf{R}$ to the object $!_X \colon \emptyset \to X$ in $\mathbf{C^2}$, which agrees with $\varepsilon_X$ as we have mentioned at the end of the proof of \cref{prop:loc-pres-ATF-initial}. Next, \cite[Proposition~2.8]{Garner_homomorphisms} (or \cite[(2.11)]{Bourke_Garner_1}) implies that $\delta'_X$ is determined as in (3) of \cref{def:alg-triv-fib}. 
	Finally, given a morphism $f \colon X \to Y$, the morphism $Q'f\colon Q'X\to Q'Y$ is determined as in (3) of \cref{def:alg-triv-fib}, since $(Q'f,f)$ is obtained by applying $\mathsf{R}$ to the morphism $(1_\emptyset,f) \colon {!}_X \to {!}_Y$ in $\mathbf{C}^\mathbf{2}$, and hence is a morphism in $\mathsf{R}\mhyphen\mathbf{Alg}\cong \ATF(\mathbf{C};\mathcal{I})$.
\end{remark}

Motivated by the above remark, in the situation of \cref{prop:loc-pres-ATF-initial}, we call the comonad $Q=(Q,\varepsilon,\delta)$ on $\mathbf{C}$ the \emph{cofibrant replacement comonad} with respect to $\mathcal{I}$.

\begin{definition}
	\label{defn:TheQ}
	Consider the monadic adjunction
	\begin{equation}
	\label{eqn:free-forgetful-adjunction}
	\begin{tikzpicture}[baseline=-\the\dimexpr\fontdimen22\textfont2\relax ]
				\node(0) at (0,0) {$\GSet$};
				\node(1) at (3,0) {$\WkCats{\omega}$};
				
				\draw [->,transform canvas={yshift=4pt}] (0) to node[auto, labelsize] 
				{$F$} (1); 
				\draw [<-,transform canvas={yshift=-4pt}] (0) to node[auto, 
				swap,labelsize] 
				{$U$} (1); 
		\path(0) to node[rotate=-90] {$\dashv$} (1);
	\end{tikzpicture}
	\end{equation}
	and set
	\[
		\mathcal{I'} = (\iota_n\colon \partial G^n \to G^n)_{n\in\mathbb{N}}
		\quad\text{ and }\quad
		\mathcal{I} = (F\iota_n\colon F\partial G^n \to FG^n)_{n\in\mathbb{N}}.
	\]
	We define $Q$ as the cofibrant replacement comonad on $\WkCats{\omega}$ with respect to $\mathcal{I}$.
	As before, we write $\overline\kappa_X$ for the canonical $\mathcal{I}$-lifting operation on the counit $\varepsilon_X\colon QX\to X$ for each $X\in\WkCats{\omega}$.
        Whenever we speak of a trivial fibration between weak $\omega$-categories, we mean that with respect to $\mathcal{I}$.
\end{definition}
	Note that an $\mathcal{I}$-lifting operation on a strict $\omega$-functor canonically corresponds to an $\mathcal{I'}$-lifting operation on its underlying globular map.
\begin{remark}
    Notice that $\mathcal{I'}$-lifting operations on globular maps are different from contractions on them, such as $\kappa$ on $\ar\colon L1\to T1$ in \cref{subsec:T,subsec:basic-operations} (see \cite[Section~9.1]{Leinster_book} for a general definition), since $n$ ranges over the set of nonnegative integers in the former, whereas the case $n=0$ is omitted in the latter.
\end{remark}

\begin{definition}[\cite{Garner_homomorphisms}]
\label{def:weak-functor}
    Let $X$ and $Y$ be weak $\omega$-categories. 
    A \emph{weak $\omega$-functor} from $X$ to $Y$ is a strict $\omega$-functor $QX\to Y$. 
    
    The category $\WkCat{\omega}$ of weak $\omega$-categories and weak $\omega$-functors is defined as the Kleisli category of the comonad $Q$. 
\end{definition}

The idea is that $QX$ is the \emph{weak $\omega$-functor classifier} for $X$.
We write a weak $\omega$-functor $p$ from $X$ to $Y$ as $p\colon X\rightsquigarrow Y$. 
By definition, it is a strict $\omega$-functor from $QX$ to $Y$; we call the latter the \emph{classifying strict $\omega$-functor} of $p$, and denote it by $\unds{p}\colon QX\to Y$.
The composite of weak $\omega$-functors $p\colon X\rightsquigarrow Y$ and $q\colon Y\rightsquigarrow Z$ is denoted by $q\Kcomp p\colon X\rightsquigarrow Z$, its classifying strict $\omega$-functor $\unds{(q\Kcomp p)}$ being the composite of
\[
\begin{tikzpicture}[baseline=-\the\dimexpr\fontdimen22\textfont2\relax ]
      \node(00) at (0,0) {$QX$};
      \node(01) at (6,0) {$Z$};
      \node(10) at (2,0) {$Q^2X$};
      \node(11) at (4,0) {$QY$};
    
      \draw [->] (11) to node[auto, labelsize] {$\unds{q}$} (01); 
      \draw [->] (00) to node[auto,labelsize] {$\delta_X$} (10); 
      \draw [->] (10) to node[auto,labelsize] {$Q\unds{p}$} (11);   
\end{tikzpicture}
\]
in $\WkCats{\omega}$. 

We have the identity-on-objects right adjoint functor $J\colon \WkCats{\omega}\to \WkCat{\omega}$ associated with the Kleisli category $\WkCat{\omega}$. It maps each strict $\omega$-functor $f\colon X\to Y$ to the weak $\omega$-functor $Jf\colon X\rightsquigarrow Y$ classified by $\unds{(Jf)}=f\circ \varepsilon_X\colon QX\to Y$. This allows us to view any strict $\omega$-functor $f$ as a weak $\omega$-functor $Jf$. 

\subsection{Weak \texorpdfstring{$\omega$}{ω}-weak equivalences}
\begin{definition}
    Let $X$ and $Y$ be weak $\omega$-categories. A \emph{weak $\omega$-weak equivalence} from $X$ to $Y$ is a weak $\omega$-functor $p\colon X\rightsquigarrow Y$ whose classifying strict $\omega$-functor $\unds{p}\colon QX\to Y$ is a strict $\omega$-weak equivalence.
\end{definition}

A strict $\omega$-functor $f\colon X\to Y$ is a strict $\omega$-weak equivalence if and only if the corresponding weak $\omega$-functor $Jf\colon X\rightsquigarrow Y$ is a weak $\omega$-weak equivalence. To see this, first observe that $\varepsilon_X$ is a strict $\omega$-weak equivalence because it is a trivial fibration; any trivial fibration is a strict $\omega$-weak equivalence since the binary relation $\sim$ on the set of cells of a weak $\omega$-category, defined by the existence of an invertible cell (see \cref{def:invertible}), is reflexive (see \cref{cor:eq-rel}). Therefore \cref{thm:2-out-of-3} shows that $f$ is a strict $\omega$-weak equivalence if and only if $\unds{(Jf)}=f\circ \varepsilon_X$ is. 

The comonad $(Q,\varepsilon,\delta)$ on $\WkCats{\omega}$ is compatible with the strict $\omega$-weak equivalences in the following sense. 

\begin{proposition}
\label{prop:Q-weak-eq}
For each weak $\omega$-category $X$, the components $\varepsilon_X\colon QX\to X$ and $\delta_X\colon QX\to Q^2X$ are strict $\omega$-weak equivalences, and the functor $Q\colon \WkCats{\omega}\to\WkCats{\omega}$ preserves and reflects strict $\omega$-weak equivalences.
\end{proposition}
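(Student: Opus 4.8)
The plan is to reduce every assertion to the 2-out-of-3 property (\cref{thm:2-out-of-3}) together with the single observation that each component of the counit $\varepsilon$ is a strict $\omega$-weak equivalence. The latter holds because each $\varepsilon_X\colon QX\to X$ carries the structure of an algebraic trivial fibration $\overline{\kappa}_X$ (see \cref{rmk:algTrivFib-via-awfs}), hence is in particular a trivial fibration, and every trivial fibration is a strict $\omega$-weak equivalence, as recalled earlier in this subsection. In particular $\varepsilon_X$ and $\varepsilon_{QX}$ are strict $\omega$-weak equivalences for every $X$, and these are the only nontrivial inputs the argument requires.

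For $\delta_X\colon QX\to Q^2X$, I would invoke the counit law $\varepsilon_{QX}\circ\delta_X=1_{QX}$ of the comonad $(Q,\varepsilon,\delta)$. Since $\varepsilon_{QX}$ and the identity $1_{QX}$ are both strict $\omega$-weak equivalences, statement \ref{item:B} of \cref{thm:2-out-of-3} shows that $\delta_X$ is a strict $\omega$-weak equivalence as well.

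For preservation and reflection, the key input is the naturality square $\varepsilon_Y\circ Qf=f\circ\varepsilon_X$ associated with a strict $\omega$-functor $f\colon X\to Y$. If $f$ is a strict $\omega$-weak equivalence, then so is $f\circ\varepsilon_X$ by \ref{item:A} (since $\varepsilon_X$ is one); rewriting this composite as $\varepsilon_Y\circ Qf$ and using that $\varepsilon_Y$ is a strict $\omega$-weak equivalence, \ref{item:B} yields that $Qf$ is a strict $\omega$-weak equivalence, which proves preservation. Conversely, if $Qf$ is a strict $\omega$-weak equivalence, then $\varepsilon_Y\circ Qf=f\circ\varepsilon_X$ is one by \ref{item:A} (composing with $\varepsilon_Y$); since $\varepsilon_X$ is a strict $\omega$-weak equivalence, \ref{item:C} then yields that $f$ is a strict $\omega$-weak equivalence, which proves reflection.

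I do not expect any serious obstacle: the entire argument is a formal consequence of the 2-out-of-3 property and the comonad identities, once one knows that the components of $\varepsilon$ are weak equivalences. The only point requiring a moment's care is to invoke the correct one of the three implications \ref{item:A}, \ref{item:B}, \ref{item:C} of \cref{thm:2-out-of-3} at each stage.
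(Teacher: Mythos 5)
Your proof is correct and follows essentially the same route as the paper: $\varepsilon_X$ is a strict $\omega$-weak equivalence because it is a trivial fibration, the counit law $\varepsilon_{QX}\circ\delta_X=1_{QX}$ handles $\delta_X$, and the naturality square $\varepsilon_Y\circ Qf=f\circ\varepsilon_X$ gives preservation and reflection, all via \cref{thm:2-out-of-3}. The only difference is presentational: the paper simply cites the 2-out-of-3 theorem, whereas you spell out which of \ref{item:A}, \ref{item:B}, \ref{item:C} is invoked at each step, and you do so correctly.
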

\begin{proof}
We have already seen that $\varepsilon_X$ is a strict $\omega$-weak equivalence.
One of the axioms for a comonad states that the diagram 
\[
\begin{tikzpicture}[baseline=-\the\dimexpr\fontdimen22\textfont2\relax ]
      \node(00) at (0,1) {$QX$};
      \node(01) at (2,1) {$Q^2X$};
      \node(10) at (2,-1) {$QX$};
      
      \draw [->] (00) to node[auto, labelsize] {$\delta_X$} (01); 
      \draw [->] (01) to node[auto, labelsize] {$\varepsilon_{QX}$} (10); 
      \draw [->] (00) to node[auto,swap,labelsize] {$1_{QX}$} (10); 
\end{tikzpicture}
\]
commutes. Since $1_{QX}$ and $\varepsilon_{QX}$ are strict $\omega$-weak equivalences, so is $\delta_X$ by \cref{thm:2-out-of-3}. Moreover, for any strict $\omega$-functor $f\colon X\to Y$, the diagram 
    \[
\begin{tikzpicture}[baseline=-\the\dimexpr\fontdimen22\textfont2\relax ]
      \node(00) at (0,1) {$QX$};
      \node(01) at (0,-1) {$X$};
      \node(10) at (2,1) {$QY$};
      \node(11) at (2,-1) {$Y$};
      
      \draw [->] (00) to node[auto,swap, labelsize] {$\varepsilon_X$} (01); 
      \draw [->] (01) to node[auto,swap, labelsize] {$f$} (11); 
      \draw [->] (00) to node[auto,labelsize] {$Qf$} (10); 
      \draw [->] (10) to node[auto,labelsize] {$\varepsilon_Y$} (11);   
\end{tikzpicture}
\]
commutes. Hence by \cref{thm:2-out-of-3}, $f$ is a strict $\omega$-weak equivalence if and only if $Qf$ is. 
\end{proof}

Now it is straightforward to show that the class of weak $\omega$-weak equivalences has the 2-out-of-3 property (in the category $\WkCat{\omega}$). 

\begin{proposition}[2-out-of-3 for weak $\omega$-weak equivalences]
\label{prop:2-out-of-3-for-weak-omega-weak-eq}
    Let $p \colon X \rightsquigarrow Y$ and $q \colon Y \rightsquigarrow Z$ be weak $\omega$-functors between weak $\omega$-categories.
    If two of $p$, $q$, and $q\Kcomp p$ are weak $\omega$-weak equivalences, then so is the third.
\end{proposition}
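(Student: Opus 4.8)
The plan is to reduce the statement to the already-established 2-out-of-3 property for \emph{strict} $\omega$-weak equivalences (\cref{thm:2-out-of-3}), by translating each of the three hypotheses/conclusions into a statement about classifying strict $\omega$-functors. Recall that the classifying strict $\omega$-functor of the Kleisli composite is
\[
\unds{(q\Kcomp p)} = \unds{q}\circ Q\unds{p}\circ \delta_X \colon QX \to Z.
\]
Accordingly, I would set $a = \delta_X\colon QX\to Q^2X$, $b = Q\unds{p}\colon Q^2X\to QY$, and $c = \unds{q}\colon QY\to Z$, so that $\unds{(q\Kcomp p)} = c\circ b\circ a$.

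Next I would record three translations. First, by definition $q$ is a weak $\omega$-weak equivalence if and only if $c = \unds{q}$ is a strict $\omega$-weak equivalence. Second, $p$ is a weak $\omega$-weak equivalence if and only if $\unds{p}$ is a strict $\omega$-weak equivalence, which by \cref{prop:Q-weak-eq} (the functor $Q$ preserves and reflects strict $\omega$-weak equivalences) is equivalent to $b = Q\unds{p}$ being one. Third, writing $d = c\circ b$, we have $\unds{(q\Kcomp p)} = d\circ a$ where $a = \delta_X$ is always a strict $\omega$-weak equivalence by \cref{prop:Q-weak-eq}; applying \cref{thm:2-out-of-3} to the composable pair $a$ and $d$ then shows that $d\circ a$ is a strict $\omega$-weak equivalence if and only if $d = c\circ b$ is. Hence $q\Kcomp p$ is a weak $\omega$-weak equivalence if and only if $c\circ b$ is a strict $\omega$-weak equivalence.

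With these three equivalences in hand, the assertion that two of $p$, $q$, $q\Kcomp p$ being weak $\omega$-weak equivalences forces the third is \emph{precisely} the assertion that two of $b$, $c$, $c\circ b$ being strict $\omega$-weak equivalences forces the third, and the latter is exactly \cref{thm:2-out-of-3} applied to the composable pair $b\colon Q^2X\to QY$ and $c\colon QY\to Z$. I expect no substantial obstacle here: the entire content lies in correctly unwinding the Kleisli composition formula and invoking \cref{prop:Q-weak-eq} together with \cref{thm:2-out-of-3}. The one point requiring a little care is matching the hypotheses correctly, namely that "$q\Kcomp p$ is a weak $\omega$-weak equivalence" must be translated into "$c\circ b$ is a strict $\omega$-weak equivalence" rather than treating $c\circ b\circ a$ as an independent third map; this is exactly where the role of $\delta_X$ as a strict $\omega$-weak equivalence is used to absorb the leg $a$.
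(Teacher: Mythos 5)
Your proposal is correct and follows essentially the same route as the paper: both decompose $\unds{(q\Kcomp p)}$ as $\unds{q}\circ Q\unds{p}\circ\delta_X$ and then combine \cref{prop:Q-weak-eq} with \cref{thm:2-out-of-3}. The paper leaves the bookkeeping implicit (``it is easy to see''), whereas you spell it out carefully --- in particular the step of absorbing $\delta_X$ via 2-out-of-3 --- but the argument is the same.
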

\begin{proof}
    Consider the commutative diagram
\[
\begin{tikzpicture}[baseline=-\the\dimexpr\fontdimen22\textfont2\relax ]
      \node(00) at (0,1) {$QX$};
      \node(01) at (2,1) {$Z$};
      \node(10) at (0,-1) {$Q^2X$};
      \node(11) at (2,-1) {$QY$};
      
      \draw [->] (00) to node[auto, labelsize] {$\unds{(q\Kcomp p)}$} (01); 
      \draw [<-] (01) to node[auto, labelsize] {$\unds{q}$} (11); 
      \draw [->] (00) to node[auto,swap,labelsize] {$\delta_X$} (10); 
      \draw [->] (10) to node[auto,swap,labelsize] {$Q\unds{p}$} (11);   
\end{tikzpicture}
\]
    in $\WkCats{\omega}$. It is easy to see that if two of $\unds{p}$, $\unds{q}$, and $\unds{(q\Kcomp p)}$ are strict $\omega$-weak equivalences, then so is the third, using \cref{prop:Q-weak-eq} and \cref{thm:2-out-of-3}.
\end{proof}

\begin{corollary}[2-out-of-6 for weak $\omega$-weak equivalences]
    Let $p \colon X \rightsquigarrow Y$, $q \colon Y \rightsquigarrow Z$, and $r \colon Z \rightsquigarrow W$ be weak $\omega$-functors between weak $\omega$-categories.
    Suppose that $q\Kcomp p$ and $r\Kcomp q$ are weak $\omega$-weak equivalences.
    Then $p,q,r$, and $r\Kcomp q\Kcomp p$ are also weak $\omega$-weak equivalences.
\end{corollary}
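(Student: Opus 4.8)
The plan is to reduce the statement to the strict $2$-out-of-$6$ property (\cref{cor:2-out-of-6}) by passing to classifying strict $\omega$-functors and exploiting the good behaviour of the comonad $Q$ recorded in \cref{prop:Q-weak-eq}. Recall that $p$ is a weak $\omega$-weak equivalence precisely when its classifying strict $\omega$-functor $\unds{p}\colon QX\to Y$ is a strict $\omega$-weak equivalence, and likewise for $q$ and $r$. Unwinding the Kleisli composition, the hypotheses say that $\unds{(q\Kcomp p)}=\unds{q}\circ Q\unds{p}\circ\delta_X$ and $\unds{(r\Kcomp q)}=\unds{r}\circ Q\unds{q}\circ\delta_Y$ are strict $\omega$-weak equivalences. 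Since $\delta_X$ and $\delta_Y$ are strict $\omega$-weak equivalences by \cref{prop:Q-weak-eq}, \cref{thm:2-out-of-3} lets me strip them off: writing $A=\unds{q}\circ Q\unds{p}\colon Q^2X\to Z$ and $B=\unds{r}\circ Q\unds{q}\colon Q^2Y\to W$, both $A$ and $B$ are strict $\omega$-weak equivalences.

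The key step is to realise $A$ and $B$ as the two adjacent binary composites of a single chain of three strict $\omega$-functors, so that \cref{cor:2-out-of-6} applies. The obstacle is that $A$ and $B$ do not chain as written: the middle map of $A$ is $\unds{q}\colon QY\to Z$, whereas the leading map of $B$ is $Q\unds{q}\colon Q^2Y\to QZ$, and these have mismatched (co)domains. This is resolved by applying $Q$ to $A$: since $Q$ preserves strict $\omega$-weak equivalences (\cref{prop:Q-weak-eq}), the map $QA=Q\unds{q}\circ Q^2\unds{p}\colon Q^3X\to QZ$ is again a strict $\omega$-weak equivalence, and now its trailing factor $Q\unds{q}$ is exactly the leading factor of $B$. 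I therefore consider the chain
\[
Q^3X\xrightarrow{\;Q^2\unds{p}\;}Q^2Y\xrightarrow{\;Q\unds{q}\;}QZ\xrightarrow{\;\unds{r}\;}W,
\]
whose two adjacent binary composites are $Q\unds{q}\circ Q^2\unds{p}=QA$ and $\unds{r}\circ Q\unds{q}=B$, both strict $\omega$-weak equivalences. The strict $2$-out-of-$6$ property (\cref{cor:2-out-of-6}) then yields that each of $Q^2\unds{p}$, $Q\unds{q}$, and $\unds{r}$ is a strict $\omega$-weak equivalence.

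Finally I descend back to $p$, $q$, and $r$. Because $Q$ also reflects strict $\omega$-weak equivalences (\cref{prop:Q-weak-eq}), from $Q^2\unds{p}$ and $Q\unds{q}$ being strict $\omega$-weak equivalences I conclude, applying reflection twice and once respectively, that $\unds{p}$ and $\unds{q}$ are strict $\omega$-weak equivalences, while $\unds{r}$ already is one; hence $p$, $q$, and $r$ are all weak $\omega$-weak equivalences. The remaining triple composite follows formally: since $q\Kcomp p$ is a weak $\omega$-weak equivalence by hypothesis and $r$ has just been shown to be one, the $2$-out-of-$3$ property for weak $\omega$-weak equivalences (\cref{prop:2-out-of-3-for-weak-omega-weak-eq}), applied to $r\Kcomp q\Kcomp p=r\Kcomp(q\Kcomp p)$, shows that $r\Kcomp q\Kcomp p$ is a weak $\omega$-weak equivalence as well. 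The only non-formal insight is the passage to $QA$ that aligns the two composites into a single chain; everything else is bookkeeping with the comonad identities and \cref{prop:Q-weak-eq,thm:2-out-of-3}.
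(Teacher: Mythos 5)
Your proof is correct and takes essentially the same route as the paper's: both hinge on applying $Q$ to $\unds{q}\circ Q\unds{p}$ so that the chain $Q^3X\xrightarrow{Q^2\unds{p}}Q^2Y\xrightarrow{Q\unds{q}}QZ\xrightarrow{\unds{r}}W$ has both adjacent binary composites equal to known strict $\omega$-weak equivalences, after which \cref{cor:2-out-of-6} applies (with $\delta$'s stripped off via \cref{prop:Q-weak-eq} and \cref{thm:2-out-of-3}). The only difference is the order of bookkeeping: the paper concludes that the triple composite $r\Kcomp q\Kcomp p$ is a weak $\omega$-weak equivalence directly and then gets $p,q,r$ from \cref{prop:2-out-of-3-for-weak-omega-weak-eq}, whereas you first descend to $p,q,r$ using the reflection part of \cref{prop:Q-weak-eq} and then recover the triple composite---an immaterial variation.
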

\begin{proof}
    We will show that $r\Kcomp q\Kcomp p$ is a weak $\omega$-weak equivalence; the rest will follow by the 2-out-of-3 property for weak $\omega$-weak equivalences.

    The classifying strict $\omega$-functor $\unds{(r\Kcomp q\Kcomp p)}$ of $r\Kcomp q\Kcomp p$ is the composite of 
    \[
\begin{tikzpicture}[baseline=-\the\dimexpr\fontdimen22\textfont2\relax ]
      \node(0) at (0,0) {$QX$};
      \node(1) at (2,0) {$Q^2X$};
      \node(2) at (4,0) {$Q^3X$};
      \node(3) at (6,0) {$Q^2Y$};
      \node(4) at (8,0) {$QZ$};
      \node(5) at (10,0){$W$};
    
      \draw [->] (0) to node[auto,labelsize] {$\delta_X$} (1); 
      \draw [->] (1) to node[auto,labelsize] {$Q\delta_X$} (2); 
      \draw [->] (2) to node[auto,labelsize] {$Q^2\unds{p}$} (3); 
      \draw [->] (3) to node[auto,labelsize] {$Q\unds{q}$} (4); 
      \draw [->] (4) to node[auto,labelsize] {$\unds{r}$} (5); 
\end{tikzpicture}
\]
in $\WkCats{\omega}$. 
Since $\delta_X$ and $Q\delta_X$ are strict $\omega$-weak equivalences by \cref{prop:Q-weak-eq}, it suffices to show that $\unds{r}\circ Q\unds{q}\circ Q^2\unds{p}$ is a strict $\omega$-weak equivalence. 
Since $q\Kcomp p$ is a weak $\omega$-weak equivalence, $\unds{q}\circ Q\unds{p}$ is a strict $\omega$-weak equivalence. Hence so is $Q\unds{q}\circ Q^2\unds{p}$, by \cref{prop:Q-weak-eq}. Since $r\Kcomp q$ is a weak $\omega$-weak equivalence, $\unds{r}\circ Q\unds{q}$ is a strict $\omega$-weak equivalence. 
Therefore $\unds{r}\circ Q\unds{q}\circ Q^2\unds{p}$ is a strict $\omega$-weak equivalence by \cref{cor:2-out-of-6}.
\end{proof}

We also have a characterisation of weak $\omega$-weak equivalences along the lines of \cref{prop:omega-weak-eq-coinductively}.
We first note the following fact.
\begin{proposition}
\label{prop:epsilonBij}
For each weak $\omega$-categry $X$, the strict $\omega$-functor $\varepsilon_X\colon QX\to X$ is bijective on objects.
\end{proposition}
\begin{proof}
    See \cref{apx:epsilon-bo}.
\end{proof}
From now on, we identify the objects of $QX$ with those of $X$ via $\varepsilon_X$.
We recall from \cite[Section~5]{Cottrell_Fujii_hom} that, given any weak $\omega$-functor $p\colon X\rightsquigarrow Y$ between weak $\omega$-categories and $x,x'\in X_0$, we obtain a weak $\omega$-functor $p_{x,x'}\colon X(x,x')\rightsquigarrow Y(px,px')$ between the hom weak $\omega$-categories as follows.
For each $x,x'\in X_0$, we obtain an algebraic trivial fibration
\[
\bigl((\varepsilon_X)_{x,x'}\colon (QX)(x,x')\to X(x,x'), (\overline{\kappa}_X)_{x,x'}\bigr)
\]
by restricting $(\varepsilon_X,\overline\kappa_X)$. 
Indeed, a commutative square
\[
\begin{tikzcd}
    F\partial G^n
    \ar[r]
    \ar[d,"F\iota_n"'] &
    (QX)(x,x')
    \ar[d,"{(\varepsilon_X)_{x,x'}}"]\\
    FG^n
    \ar[r]&
    X(x,x')
\end{tikzcd}
\]
can be identified with a commutative square
\[
\begin{tikzcd}
    F\partial G^{n+1}
    \ar[r,"{\ppair{u,v}}"]
    \ar[d,"F\iota_{n+1}"']
        &
        QX
        \ar[d,"{\varepsilon_X}"]
    \\
    FG^{n+1}
    \ar[r,"w", swap]
        &
        X
\end{tikzcd}
\]
with $s^X_0(u)=s^X_0(v)=s^X_0(w)=x$ and $t^X_0(u)=t^X_0(v)=t^X_0(w)=x'$.
The latter square admits a lift
\[
k = \overline\kappa_X\bigl({n+1};\ppair{u,v},w\bigr)
\]
which necessarily satisfies $s^X_0(k)=x$ and $t^X_0(k)=x'$, and we can set the lift of the former square to be the map corresponding to this $k$.

On the other hand, the initial algebraic trivial fibration over $X(x,x')$ is
\[
\bigl(\varepsilon_{X(x,x')}\colon Q\bigl(X(x,x')\bigr)\to X(x,x'),\overline \kappa_{X(x,x')}\bigr),
\]
and hence we obtain a unique strict $\omega$-functor $(\alpha_X)_{x,x'}\colon Q\bigl(X(x,x')\bigr)\to (QX)(x,x')$ such that 
\[
\bigl((\alpha_X)_{x,x'},1_{X(x,x')}\bigr)\colon \bigl(\varepsilon_{X(x,x')},\overline \kappa_{X(x,x')}\bigr)\to\bigl((\varepsilon_X)_{x,x'}, (\overline{\kappa}_X)_{x,x'}\bigr)
\]
is a morphism of algebraic trivial fibrations. 
In particular, $(\alpha_X)_{x,x'}$ makes the diagram
\[
\begin{tikzpicture}[baseline=-\the\dimexpr\fontdimen22\textfont2\relax ]
      \node(00) at (0,1) {$Q\bigl(X(x,x')\bigr)$};
      \node(01) at (2,-1) {$X(x,x')$};
      \node(10) at (4,1) {$(QX)(x,x')$};
      
      \draw [->] (00) to node[auto,swap, labelsize] {$\varepsilon_{X(x,x')}$} (01);  
      \draw [->] (00) to node[auto,labelsize] {$(\alpha_X)_{x,x'}$} (10); 
      \draw [->] (10) to node[auto,labelsize] {$(\varepsilon_X)_{x,x'}$} (01);   
\end{tikzpicture}
\]
commute, and hence is a strict $\omega$-weak equivalence by \cref{prop:omega-weak-eq-coinductively,thm:2-out-of-3}. The weak $\omega$-functor $p_{x,x'}\colon X(x,x')\rightsquigarrow Y(px,px')$ is defined by setting its classifying strict $\omega$-functor $\unds{(p_{x,x'})}$ to be the composite of 
\[
\begin{tikzpicture}[baseline=-\the\dimexpr\fontdimen22\textfont2\relax ]
      \node(0) at (0,0) {$Q\bigl(X(x,x')\bigr)$};
      \node(1) at (3,0) {$(QX)(x,x')$};
      \node(2) at (6,0) {$Y(px,px')$.};
    
      \draw [->] (0) to node[auto,labelsize] {$(\alpha_X)_{x,x'}$} (1); 
      \draw [->] (1) to node[auto,labelsize] {$(\unds{p})_{x,x'}$} (2); 
\end{tikzpicture}
\]

We now give the characterisation of weak $\omega$-weak equivalences alluded to just above \cref{prop:epsilonBij}.
We say that a weak $\omega$-functor is \emph{essentially $0$-surjective} if its classifying strict $\omega$-functor is so. 
\begin{proposition}
\label{prop:weak-omega-weak-eq-coinductively}
A weak $\omega$-functor $p\colon X\rightsquigarrow Y$ between weak $\omega$-categories is a weak $\omega$-weak equivalence if and only if it is essentially $0$-surjective and the induced weak $\omega$-functor $p_{x,x'}\colon X(x,x')\rightsquigarrow Y(px,px')$ is a weak $\omega$-weak equivalence for each $x,x'\in X_0$.    
\end{proposition}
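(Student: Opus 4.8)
The plan is to reduce this statement, which concerns weak $\omega$-functors, to the corresponding statement for strict $\omega$-functors already proved in \cref{prop:omega-weak-eq-coinductively}. The bridge is the definition of weak $\omega$-weak equivalence in terms of the classifying strict $\omega$-functor $\unds{p}\colon QX\to Y$, together with the two facts recalled just before the statement: that the objects of $QX$ may be identified with those of $X$ (so that $\unds{p}$ sends $x\in X_0$ to $px$), and that the classifying strict $\omega$-functor of $p_{x,x'}$ factors as $\unds{(p_{x,x'})} = (\unds{p})_{x,x'}\circ(\alpha_X)_{x,x'}$, where $(\alpha_X)_{x,x'}$ is a strict $\omega$-weak equivalence.

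First I would apply \cref{prop:omega-weak-eq-coinductively} to the strict $\omega$-functor $\unds{p}$. This gives that $\unds{p}$ is a strict $\omega$-weak equivalence if and only if it is essentially $0$-surjective and, for every pair of objects $x,x'$ of $QX$ (equivalently, of $X$), the induced strict $\omega$-functor $(\unds{p})_{x,x'}\colon (QX)(x,x')\to Y(px,px')$ is a strict $\omega$-weak equivalence. By definition, ``$\unds{p}$ is a strict $\omega$-weak equivalence'' means precisely ``$p$ is a weak $\omega$-weak equivalence,'' and ``$\unds{p}$ is essentially $0$-surjective'' means precisely ``$p$ is essentially $0$-surjective.'' It therefore remains only to match the local condition on $(\unds{p})_{x,x'}$ with the corresponding condition on $p_{x,x'}$.

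This last step is where the 2-out-of-3 property enters. Since $\unds{(p_{x,x'})} = (\unds{p})_{x,x'}\circ(\alpha_X)_{x,x'}$ and $(\alpha_X)_{x,x'}$ is a strict $\omega$-weak equivalence, \cref{thm:2-out-of-3} yields that $(\unds{p})_{x,x'}$ is a strict $\omega$-weak equivalence if and only if the composite $\unds{(p_{x,x'})}$ is (both implications being instances of 2-out-of-3 with $(\alpha_X)_{x,x'}$ in the role of the known equivalence: the forward implication uses the ``$f$ and $g$ give $gf$'' clause, the backward one the ``$f$ and $gf$ give $g$'' clause). By the definition of weak $\omega$-weak equivalence applied to $p_{x,x'}$, the latter condition is exactly that $p_{x,x'}$ is a weak $\omega$-weak equivalence. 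Chaining these equivalences over all $x,x'$ completes the argument.

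I do not expect a genuine obstacle here: the conceptual content—the construction of $p_{x,x'}$ and the verification that $(\alpha_X)_{x,x'}$ is a strict $\omega$-weak equivalence—has already been carried out in the paragraph preceding the statement, so the proof is a formal chain of ``if and only if''s. The only points demanding care are applying the two halves of \cref{thm:2-out-of-3} in the correct direction to pass freely between $(\unds{p})_{x,x'}$ and $\unds{(p_{x,x'})}$, and keeping the two notions of essential $0$-surjectivity (for strict versus weak $\omega$-functors) aligned via the identification of objects of $QX$ with those of $X$.
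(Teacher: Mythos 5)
Your proposal is correct and follows essentially the same route as the paper's own proof: both rest on the factorisation $\unds{(p_{x,x'})} = (\unds{p})_{x,x'}\circ(\alpha_X)_{x,x'}$, the fact that $(\alpha_X)_{x,x'}$ is a strict $\omega$-weak equivalence, the 2-out-of-3 property (\cref{thm:2-out-of-3}) to pass between $(\unds{p})_{x,x'}$ and $\unds{(p_{x,x'})}$, and \cref{prop:omega-weak-eq-coinductively} applied to $\unds{p}$. The only difference is presentational (you invoke the strict coinductive characterisation first and then the 2-out-of-3 step, whereas the paper does it in the opposite order), and your bookkeeping of the object identification and the two clauses of 2-out-of-3 is accurate.
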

\begin{proof}
    Since $(\alpha_X)_{x,x'}$ is a strict $\omega$-weak equivalence, $(\unds{p})_{x,x'}$ is a strict $\omega$-weak equivalence if and only if $\unds{(p_{x,x'})}$ is, by \cref{thm:2-out-of-3}. Therefore the claim follows from \cref{prop:omega-weak-eq-coinductively}.
\end{proof}

\begin{remark}
    Just like the strict case mentioned in \cref{rmk:omega-weak-eq-coinductively}, 
    the statement of \cref{prop:weak-omega-weak-eq-coinductively} can be taken as a coinductive definition of weak $\omega$-weak equivalences.
    Define the monotone map
    \[
    \Psi\colon \mathcal{P}\bigl(\mathrm{mor}(\WkCat{\omega})\bigr)\to\mathcal{P}\bigl(\mathrm{mor}(\WkCat{\omega})\bigr)
    \]
    by mapping $S\subseteq \mathrm{mor}(\WkCat{\omega})$ to the set $\Psi(S)$ of all weak $\omega$-functors which are essentially $0$-surjective and locally in $S$, i.e.,
    \begin{multline*}
    \Psi(S)=\bigl\{\, (p\colon X\rightsquigarrow Y)\in\mathrm{mor}(\WkCat{\omega})  \,\big\vert\, \text{$p$ is essentially $0$-surjective and}\\
    \text{we have $\bigl(p_{x,x'}\colon X(x,x')\rightsquigarrow Y(fx,fx')\bigr)\in S$ for each $x,x'\in X_0$}\,\bigr\}.
    \end{multline*}
    Then the set $E$ of all weak $\omega$-weak equivalences is the largest (post-)fixed point $\nu\Psi$ of $\Psi$.  
    Indeed, \cref{prop:weak-omega-weak-eq-coinductively} says that we have $E=\Psi(E)$, which implies $E\subseteq \nu\Psi$.
    On the other hand, given $S\subseteq \mathrm{mor}(\WkCat{\omega})$ with $S\subseteq \Psi(S)$, one can show that the classifying strict $\omega$-functor $\unds{p}$ of each $p\in S$ is essentially $n$-surjective, by induction on $n\in\N$. 
    (In the inductive step, apply \cref{prop:2-out-of-3-g} to the commutative triangle 
\[
\begin{tikzpicture}[baseline=-\the\dimexpr\fontdimen22\textfont2\relax ]
      \node(0) at (0,-1) {$Q\bigl(X(x,x')\bigr)$};
      \node(1) at (2,1) {$(QX)(x,x')$};
      \node(2) at (4,-1) {$Y(px,px')$};
    
      \draw [->] (0) to node[auto,labelsize] {$(\alpha_X)_{x,x'}$} (1); 
      \draw [->] (1) to node[auto,labelsize] {$(\unds{p})_{x,x'}$} (2); 
      \draw [->] (0) to node[auto,labelsize] {$\unds{(p_{x,x'})}$} (2); 
\end{tikzpicture}
\]
    in $\WkCats{\omega}$.)
    Therefore we have $S\subseteq E$, and hence $E$ is the greatest (post-)fixed point $\nu\Psi$ of $\Psi$.
\end{remark}

\subsection{Weak \texorpdfstring{$\omega$}{ω}-weak equivalences via underlying globular maps}
\label{underlying}
Let us consider the relationship between the weak $\omega$-weak equivalences and similar known notions in low dimensions, such as \emph{biequivalences} between bicategories and \emph{triequivalences} between tricategories. 
Recall that in low dimensions, notions of weak functors (such as \emph{homomorphisms} of bicategories \cite[Section~4]{Benabou_bicat} or \emph{trihomomorphisms} of tricategories \cite[Definition~3.2]{GPS-tricat}) are commonly defined as globular maps between the underlying globular sets equipped with coherence cells.
(We define the underlying globular set of a weak $n$-dimensional category $X$ for $n\in\N$ by setting $s^X_k,t^X_k\colon X_{k+1}\to X_k$ to be the identity function for all $k\geq n$.)
For example, a homomorphism $p\colon X\rightsquigarrow Y$ of bicategories is a globular map $\undg{p}\colon X\to Y$ between the underlying globular sets equipped with coherence 2-cells, such as an invertible 2-cell $\undg{p}(u)\comp{0}{Y} \undg{p}(u) \to \undg{p}(u\comp{0}{X} v)$ in $Y$ for each composable pair of 1-cells $(u,v)$ in $X$. 
However, the definition of biequivalence only refers to the underlying globular map of a homomorphism: a homomorphism $p\colon X\rightsquigarrow Y$ is a biequivalence if and only if its underlying globular map $\undg{p}\colon X\to Y$ is essentially $3$-surjective \cite[(1.33)]{Street-fib-bicat}, which implies that $\undg{p}$ is essentially $\omega$-surjective. Similarly, a trihomomorphism $p\colon X\rightsquigarrow Y$ is a triequivalence if and only if its underlying globular map $\undg{p}\colon X\to Y$ is essentially 4-surjective \cite[Definition~3.5]{GPS-tricat} (or equivalently, is essentially $\omega$-surjective).  

We characterise weak $\omega$-weak equivalences in a similar fashion: we define the \emph{underlying globular map} $\undg{p}\colon X\to Y$ of a weak $\omega$-functor $p\colon X\rightsquigarrow Y$ (which is a genuine globular map $X \to Y$ and is in particular different from the classifying strict $\omega$-functor $\unds{p}\colon QX\to Y$ of $p$), and show that a weak $\omega$-functor $p\colon X\rightsquigarrow Y$ is a weak $\omega$-weak equivalence if and only if its underlying globular map $\undg{p}\colon X\to Y$ is essentially $\omega$-surjective.

Hence our first goal is to define the operation of taking the underlying globular map of a weak $\omega$-functor. We would like this operation to be functorial, i.e., giving rise to a functor $V \colon \WkCat{\omega}\to\GSet$. 
This turns out to be a consequence of an appropriate ``change of base'' result for the notions explained in \cref{def:alg-triv-fib} and \cref{prop:univ-cofib-repl-comonad}. (The same goal can also be achieved via the theory of \textsc{awfs}; see \cref{rmk:sigma-via-awfs}.)

\begin{definition}
\label{def:base-change-for-Q}
    Let 
         \[
    	\begin{tikzpicture}[baseline=-\the\dimexpr\fontdimen22\textfont2\relax ]
    				\node(0) at (0,0) {$\mathbf{D}$};
    				\node(1) at (3,0) {$\mathbf{C}$};
    				
    				\draw [->,transform canvas={yshift=4pt}] (0) to node[auto, labelsize] 
    				{$F$} (1); 
    				\draw [<-,transform canvas={yshift=-4pt}] (0) to node[auto, 
    				swap,labelsize] 
    				{$U$} (1); 
    		\path(0) to node[rotate=-90] {$\dashv$} (1);
    	\end{tikzpicture}
     \]
    be an adjunction between categories, and let $\mathcal{I'}=(\iota_i)_{i\in I}$ be an indexed family of morphisms in $\mathbf{D}$. We define the indexed family $\mathcal{I}$ of morphisms in $\mathbf{C}$ as $\mathcal{I}=(F\iota_i)_{i\in{I}}$. 
    \begin{enumerate}
        \item We have a functor $\ATF(U)\colon \ATF(\mathbf{C};\mathcal{I})\to \ATF(\mathbf{D};\mathcal{I'})$ defined by mapping each $\mathcal{I}$-algebraic trivial fibration $(f\colon X\to Y,\overline\kappa)$ to the $\mathcal{I'}$-algebraic trivial fibration $(Uf\colon UX\to UY, \overline\kappa^\dag)$, where the $\mathcal{I'}$-lifting operation $\overline\kappa^\dag$ on $Uf$ is determined by the following correspondence under the adjunction $F\dashv U$. 
        \begin{equation*}
\begin{tikzpicture}[baseline=-\the\dimexpr\fontdimen22\textfont2\relax ]
      \node(00) at (0,1) {$\bullet$};
      \node(01) at (2,1) {$UX$};
      \node(10) at (0,-1) {$\bullet$};
      \node(11) at (2,-1) {$UY$};
      
      \draw [->] (00) to node[auto, labelsize] {$u^\dag$} (01); 
      \draw [->] (01) to node[auto, labelsize] {$Uf$} (11); 
      \draw [->] (00) to node[auto,swap,labelsize] {$\iota_i$} (10); 
      \draw [->] (10) to node[auto,swap,labelsize] {$v^\dag$} (11);   
      \draw [->, dashed] (10) to node[midway,fill=white,labelsize] {$\overline \kappa^\dag(i;u^\dag,v^\dag)$} (01);
\end{tikzpicture}\qquad
\begin{tikzpicture}[baseline=-\the\dimexpr\fontdimen22\textfont2\relax ]
      \node(00) at (0,1) {$\bullet$};
      \node(01) at (2,1) {$X$};
      \node(10) at (0,-1) {$\bullet$};
      \node(11) at (2,-1) {$Y$};
      
      \draw [->] (00) to node[auto, labelsize] {$u$} (01); 
      \draw [->] (01) to node[auto, labelsize] {$f$} (11); 
      \draw [->] (00) to node[auto,swap,labelsize] {$F\iota_i$} (10); 
      \draw [->] (10) to node[auto,swap,labelsize] {$v$} (11);   
      \draw [->, dashed] (10) to node[midway,fill=white,labelsize] {$\overline \kappa(i;u,v)$} (01);
\end{tikzpicture}
\end{equation*}
    Notice that the following square is a pullback in $\mathbf{CAT}$: 
         \begin{equation}
         \label{eqn:proj-lift-CAT}
\begin{tikzpicture}[baseline=-\the\dimexpr\fontdimen22\textfont2\relax ]
      \node(00) at (0,1) {$\ATF(\mathbf{C};\mathcal{I})$};
      \node(01) at (3,1) {$\ATF(\mathbf{D};\mathcal{I'})$};
      \node(10) at (0,-1) {$\mathbf{C}^\mathbf{2}$};
      \node(11) at (3,-1) {$\mathbf{D}^\mathbf{2}$.};
      \draw  (0.3,0.5) to (0.5,0.5) to (0.5,0.7); 
      \draw [->] (00) to node[auto, labelsize] {$\ATF(U)$} (01); 
      \draw [->] (01) to node[auto, labelsize] {forgetful} (11); 
      \draw [->] (00) to node[auto,swap,labelsize] {forgetful} (10); 
      \draw [->] (10) to node[auto,swap,labelsize] {$U^\mathbf{2}$} (11);
\end{tikzpicture}
\end{equation}
    \item Now suppose that both functors $\cod\colon \ATF(\mathbf{C};\mathcal{I})\to \mathbf{C}$ and $\cod\colon \ATF(\mathbf{D};\mathcal{I'})\to \mathbf{D}$ have laris. As in (3) of \cref{def:alg-triv-fib}, we write $(\varepsilon_X\colon QX\to X,\overline{\kappa}_X)$ and $(\varepsilon'_A\colon Q'A\to A,\overline\kappa'_A)$ for the values under these laris of objects $X\in \mathbf{C}$ and $A\in \mathbf{D}$, respectively. For each object $X\in\mathbf{C}$, we define $\sigma_X\colon Q'UX\to UQX$ as the unique morphism in $\mathbf{D}$ such that
    \begin{equation}
    \begin{tikzpicture}[baseline=-\the\dimexpr\fontdimen22\textfont2\relax ]
          \node(01) at (0,0.75) {$Q'UX$};
          \node(11) at (0,-0.75) {$UX$};
          \node(02) at (2,0.75) {$UQX$};
          \node(12) at (2,-0.75) {$UX$};
          \draw [->] (01) to node[auto, labelsize] {$\sigma_X$} (02); 
          \draw [->] (01) to node[auto,swap, labelsize] {$(\varepsilon'_{UX},\overline\kappa'_{UX})$} (11); 
          \draw [->] (02) to node[auto, labelsize] {$(U\varepsilon_X,\overline\kappa_X^\dagger)$} (12); 
          \draw [->] (11) to node[auto,swap,labelsize] {$1_{UX}$} (12);
    \end{tikzpicture}\tag{S}\label{eqn:sigma}
    \end{equation}
    is a morphism in $\ATF(\mathbf{D};\mathcal{I'})$. \qedhere
    \end{enumerate}
\end{definition}

\begin{proposition}
    \label{prop:base-change-for-Q}
    In the situation of (2) of \cref{def:base-change-for-Q}, the family $(\sigma_X)_{X\in\mathbf{C}}$ is a natural transformation  	
    \[\begin{tikzpicture}[baseline=-\the\dimexpr\fontdimen22\textfont2\relax ]
		\node(00) at (0,1) {$\mathbf{C}$};
		\node(01) at (3,1) {$\mathbf{C}$};
		\node(10) at (0,-1) {$\mathbf{D}$};
		\node(11) at (3,-1) {$\mathbf{D}$};
		
		\draw [->] (00) to node[auto, labelsize] {$Q$} (01); 
		\draw [->] (01) to node[auto, labelsize] {$U$} (11); 
		\draw [->] (00) to node[auto,swap,labelsize] {$U$} (10); 
		\draw [->] (10) to node[auto,swap,labelsize] {$Q'$} (11);   
		\draw [2cell] (1.5,-0.3) to node[auto,swap,labelsize] {$\sigma$} (1.5,0.3); 
\end{tikzpicture}\]
	such that the pair $(U,\sigma)$ is a comonad opfunctor \cite[Section~4]{Street_FTM} from $(\mathbf{C},Q)$ to $(\mathbf{D},Q')$.
\end{proposition}
\begin{proof}
    As in the proof of \cref{prop:univ-cofib-repl-comonad}, one can verify the naturality of $\sigma$ and the compatibility of $\sigma$ with the comonads $Q$ and $Q'$ using the universality of $(\varepsilon'_{UX},\overline\kappa_{UX}')$; below are the details.

    The compatibility of $\sigma$ with the counits (i.e., that $\varepsilon'_{UX}= U\varepsilon_X\circ \sigma_X$ holds for any $X\in\mathbf{C}$) is immediate from the definition of $\sigma_X$. 

    To show the naturality of $\sigma$, it suffices to show that for any morphism $f\colon X\to Y$ in $\mathbf{C}$, both $(\sigma_Y\circ Q'Uf,Uf)$ and $(UQf\circ \sigma_X,Uf)$ are morphisms $(\varepsilon'_{UX},\overline\kappa'_{UX})\to (U\varepsilon_X,\overline\kappa^\dag_X)$ in $\ATF(\mathbf{D};\mathcal{I'})$. These can be seen by the following diagrams, which can be regarded either as composition of morphisms in the category $\ATF(\mathbf{D};\mathcal{I'})$ or horizontal composition of squares in the double category $\AATF(\mathbf{D};\mathcal{I'})$ introduced in the proof of \cref{prop:univ-cofib-repl-comonad}.
       \begin{equation*}
        \begin{tikzpicture}[baseline=-\the\dimexpr\fontdimen22\textfont2\relax ]
              \node(00) at (0,0.75)   {$Q'UX$};
              \node(01) at (0,-0.75)  {$UX$};
              \node(10) at (2,0.75) {$Q'UY$};
              \node(11) at (2,-0.75)   {$UY$};
              \node(20) at (4,0.75)   {$UQY$};
              \node(21) at (4,-0.75)     {$UY$};
              \draw [->] (00) to node[auto,swap, labelsize] {$(\varepsilon'_{UX},\overline\kappa'_{UX})$} (01); 
              \draw [->] (10) to node[midway,fill=white, labelsize] {$(\varepsilon'_{UY},\overline\kappa'_{UY})$} (11); 
              \draw [->] (20) to node[auto, labelsize] {$(U\varepsilon_{Y},\overline\kappa_{Y}^\dag)$} (21);  
              \draw [->] (00) to node[auto, labelsize] {$Q'Uf$} (10); 
              \draw [->] (10) to node[auto, labelsize] {$\sigma_Y$} (20); 
              \draw [->] (01) to node[auto,swap, labelsize] {$Uf$} (11); 
              \draw [->] (11) to node[auto,swap, labelsize] {$1_{UY}$} (21);  
              \node at (1,0) {(\ref{eqn:Qf})};
              \node at (3,0) {(\ref{eqn:sigma})};
        \end{tikzpicture}
        \qquad
        \begin{tikzpicture}[baseline=-\the\dimexpr\fontdimen22\textfont2\relax ]
              \node(00) at (0,0.75)   {$Q'UX$};
              \node(01) at (0,-0.75)  {$UX$};
              \node(10) at (2,0.75) {$UQX$};
              \node(11) at (2,-0.75)   {$UX$};
              \node(20) at (4,0.75)   {$UQY$};
              \node(21) at (4,-0.75)     {$UY$};
              \draw [->] (00) to node[auto,swap, labelsize] {$(\varepsilon'_{UX},\overline\kappa'_{UX})$} (01); 
              \draw [->] (10) to node[midway,fill=white, labelsize] {$(U\varepsilon_{X},\overline\kappa_{X}^\dag)$} (11); 
              \draw [->] (20) to node[auto, labelsize] {$(U\varepsilon_{Y},\overline\kappa_{Y}^\dag)$} (21);  
              \draw [->] (00) to node[auto, labelsize] {$\sigma_X$} (10); 
              \draw [->] (10) to node[auto, labelsize] {$UQf$} (20); 
              \draw [->] (01) to node[auto,swap, labelsize] {$1_{UX}$} (11); 
              \draw [->] (11) to node[auto,swap, labelsize] {$Uf$} (21); \node at (1,0) {(\ref{eqn:sigma})};
              \node at (3,0) {(\ref{eqn:Qf})};
        \end{tikzpicture}
        \end{equation*}

        Finally, the compatibility of $\sigma$ with the comultiplications means that for any object $X\in \mathbf{C}$, the diagram 
    \begin{equation}
    \label{eqn:sigma-delta}
		\begin{tikzcd}
			Q'UX
			\ar[rr,"\sigma_X"]
			\ar[d,"\delta'_{UX}"']
				&
					&
					UQX
					\ar[d,"U\delta_X"]
			\\
			Q'^2UX
			\ar[r,"Q'\sigma_X"']
				&
				Q'UQX
				\ar[r,"\sigma_{QX}"']
					&
					UQ^2X
		\end{tikzcd}
    \end{equation}
    commutes.
    To see this, observe that the two composites in \cref{eqn:sigma-delta} lift to the top horizontal composites in the following diagrams in the double category $\AATF(\mathbf{D};\mathcal{I'})$:
    \[
    \begin{tikzpicture}[baseline=-\the\dimexpr\fontdimen22\textfont2\relax ]
              \node(00) at (0,1.5)   {$Q'UX$};
              \node(01) at (0,-1.5)  {$UX$};
              \node(10) at (2,1.5) {$UQX$};
              \node(11) at (2,-1.5)   {$UX$};
              \node(20) at (4,1.5)   {$UQ^2X$};
              \node(22) at (4,0)     {$UQX$};
              \node(21) at (4,-1.5)     {$UX$};
              \draw [->] (00) to node[auto,swap, labelsize] {$(\varepsilon'_{UX},\overline\kappa'_{UX})$} (01); 
              \draw [->] (10) to node[midway,fill=white, labelsize] {$(U\varepsilon_{X},\overline\kappa_{X}^\dag)$} (11); 
              \draw [->] (20) to node[auto, labelsize] {$(U\varepsilon_{QX},\overline{\kappa}^\dag_{QX})$} (22);
              \draw [->] (22) to node[auto, labelsize] {$(U\varepsilon_{X},\overline{\kappa}^\dag_{X})$} (21);  
              \draw [->] (00) to node[auto, labelsize] {$\sigma_X$} (10); 
              \draw [->] (10) to node[auto, labelsize] {$U\delta_X$} (20); 
              \draw [->] (01) to node[auto,swap, labelsize] {$1_{UX}$} (11); 
              \draw [->] (11) to node[auto,swap, labelsize] {$1_{UX}$} (21); 
              \node at (1,0) {(\ref{eqn:sigma})};
              \node at (3,0) {(\ref{eqn:delta})};
    \end{tikzpicture}
    \]
    \[
    \begin{tikzpicture}[baseline=-\the\dimexpr\fontdimen22\textfont2\relax ]
              \node(00) at (0,1.5)   {$Q'UX$};
              \node(01) at (0,-1.5)  {$UX$};
              \node(10) at (3,1.5) {$Q'^2UX$};
              \node(11) at (3,-1.5)   {$UX$};
              \node(12) at (3,0)   {$Q'UX$};
              \node(20) at (6,1.5)   {$Q'UQX$};
              \node(21) at (6,-1.5)     {$UX$};
              \node(22) at (6,0)  {$UQX$};
              \node(30) at (9,1.5)   {$UQ^2X$};
              \node(31) at (9,-1.5)     {$UX$};
              \node(32) at (9,0)     {$UQX$};
              \draw [->] (00) to node[auto,swap, labelsize] {$(\varepsilon'_{UX},\overline\kappa'_{UX})$} (01); 
              \draw [->] (10) to node[midway,fill=white, labelsize] {$(\varepsilon'_{Q'UX},\overline\kappa'_{Q'UX})$} (12);
              \draw [->] (12) to node[midway,fill=white, labelsize] {$(\varepsilon'_{UX},\overline\kappa'_{UX})$} (11); 
              \draw [->] (20) to node[midway, fill=white, labelsize] {$(\varepsilon'_{UQX},\overline\kappa'_{UQX})$} (22);  
              \draw [->] (22) to node[midway, fill=white, labelsize] {$(U\varepsilon_{X},\overline\kappa_{X}^\dag)$} (21); 
              \draw [->] (30) to node[auto, labelsize] {$(U\varepsilon_{QX},\overline\kappa_{QX}^\dag)$} (32);
              \draw [->] (32) to node[auto, labelsize] {$(U\varepsilon_{X},\overline\kappa_{X}^\dag)$} (31); 
              \draw [->] (00) to node[auto, labelsize] {$\delta'_{UX}$} (10); 
              \draw [->] (10) to node[auto, labelsize] {$Q'\sigma_X$} (20); 
              \draw [->] (12) to node[auto, labelsize] {$\sigma_X$} (22); 
              \draw [->] (20) to node[auto, labelsize] {$\sigma_{QX}$} (30);
              \draw [->] (22) to node[auto, labelsize] {$1$} (32); 
              \draw [->] (01) to node[auto,swap, labelsize] {$1_{UX}$} (11); 
              \draw [->] (11) to node[auto,swap, labelsize] {$1_{UX}$} (21);  
              \draw [->] (21) to node[auto,swap, labelsize] {$1_{UX}$} (31); 
              \node at (1.5,0) {(\ref{eqn:delta})};
              \node at (4.5,0.75) {(\ref{eqn:Qf})};
              \node at (4.5,-0.75) {(\ref{eqn:sigma})};
              \node at (7.5,0.75) {(\ref{eqn:sigma})};
              \node at (7.5,-0.75) {$=$};
    \end{tikzpicture}
    \]
    (The square labelled by (\ref{eqn:delta}) in the first diagram indeed exists in $\AATF(\mathbf{D};\mathcal{I'})$ because $U$ induces a double functor
    \[
    \AATF(U)\colon \AATF(\mathbf{C};\mathcal{I})\to\AATF(\mathbf{D};\mathcal{I'})
    \]
    (cf.~\cite[Proposition~21]{Bourke_Garner_1}),
    and hence it preserves the vertical composition of algebraic trivial fibrations.)
    This means that the two composites of \cref{eqn:sigma-delta} define parallel morphisms in $\ATF(\mathbf{D};\mathcal{I'})_{UX}$ from the initial object (see \cref{rmk:fibres-of-ATF}), and hence they must be equal.
\end{proof}

\begin{remark}
\label{rmk:sigma-via-awfs}
    Let us consider the situation of \cref{def:base-change-for-Q}, and assume moreover that the categories $\mathbf{C}$ and $\mathbf{D}$ are locally presentable and the indexing set $I$ is small.
    Then by \cref{prop:loc-pres-ATF-initial}, the assumptions in (2) of \cref{def:base-change-for-Q} are satisfied. 
    We now explain that in this case, the result of \cref{prop:base-change-for-Q} can also be obtained via the theory of \textsc{awfs}.
    
    As explained in the proof of \cref{prop:loc-pres-ATF-initial}, there exist an \textsc{awfs} $(\mathsf{L},\mathsf{R})$ on $\mathbf{C}$ cofibrantly generated by $\mathcal{I}$ and an \textsc{awfs} $(\mathsf{L'},\mathsf{R'})$ on $\mathbf{D}$ cofibrantly generated by $\mathcal{I'}$. 
    (The \textsc{awfs} $(\mathsf{L'},\mathsf{R'})$ coincides with the one obtained by \emph{projectively lifting} $(\mathsf{L},\mathsf{R})$ along $U\colon \mathbf{C}\to \mathbf{D}$ \cite[Section~4.5]{Bourke_Garner_1}, i.e., the pullback square \cref{eqn:proj-lift-CAT} lifts to a suitable pullback of double categories.)
    By \cite[Proposition~22]{Bourke_Garner_1}, the universality of $(\mathsf{L'},\mathsf{R'})$ induces a canonical oplax morphism of \textsc{awfs} $(\mathsf{L'},\mathsf{R'})\to (\mathsf{L},\mathsf{R})$ whose functor part is $F$. This involves a comonad functor from 
    $(\mathbf{D^2},\mathsf{L'})$ to $(\mathbf{C^2},\mathsf{L})$ whose functor part is $F^\mathbf{2}$. Since $F$ preserves the initial object, we can restrict it and obtain a comonad functor
    \[\begin{tikzpicture}[baseline=-\the\dimexpr\fontdimen22\textfont2\relax ]
		\node(00) at (0,1) {$\mathbf{C}$};
		\node(01) at (3,1) {$\mathbf{C}$};
		\node(10) at (0,-1) {$\mathbf{D}$};
		\node(11) at (3,-1) {$\mathbf{D}$};
		
		\draw [->] (00) to node[auto, labelsize] {$Q$} (01); 
		\draw [<-] (01) to node[auto, labelsize] {$F$} (11); 
		\draw [<-] (00) to node[auto,swap,labelsize] {$F$} (10); 
		\draw [->] (10) to node[auto,swap,labelsize] {$Q'$} (11);   
		\draw [2cell] (1.5,0.3) to node[auto,labelsize] {$\tau$} (1.5,-0.3); 
\end{tikzpicture}\]
    from $(\mathbf{D},Q')$ to $(\mathbf{C},Q)$. 
    The comonad opfunctor $(U,\sigma)$ in \cref{prop:base-change-for-Q} and $(F,\tau)$ form a (doctrinal) adjunction \cite{Kelly:doctrinal}, i.e., $\sigma$ is the mate of $\tau$ under $F\dashv U$. 
    (To see this, observe that we also have a canonical lax morphism of \textsc{awfs} $(\mathsf{L},\mathsf{R})\to (\mathsf{L'},\mathsf{R'})$ whose functor part is $U$ by \cite[Section~2.10]{Bourke_Garner_1}. 
    One can also restrict this to obtain a comonad opfunctor from $Q$ to $Q'$, which coincides with $(U,\sigma)$ and is also the right adjoint of $(F,\tau)$.)
\end{remark}

A comonad opfunctor (such as the one in \cref{prop:base-change-for-Q}) induces a functor between the respective Kleisli categories.
The following proposition ensures that, in the case of our interest, the codomain of this induced functor is indeed $\GSet$.

\begin{proposition}
	\label{prop:every-globular-set-is-algebraically-cofibrant}
	The cofibrant replacement comonad on $\GSet$ with respect to $\mathcal{I'}$ in \Cref{defn:TheQ} is isomorphic to the identity comonad.
\end{proposition}
\begin{proof}
	In view of \cref{rmk:fibres-of-ATF}, it suffices to show the following: for each $A\in\GSet$, $(1_A,\overline\omega_A)$ is the initial object of the category $\ATF(\GSet;\mathcal{I'})_A$. (In other words, $\ATF(\GSet;\mathcal{I'})_A$ has a zero object.) 

	To this end, suppose that we are given an algebraic trivial fibration $(f \colon B \to A, \overline\kappa)$ over $A$.
    Then a morphism $h\colon A\to B$ in $\GSet$ determines a morphism $(h,1_A)\colon (1_A,\overline\omega_A)\to (f,\overline\kappa)$ in $\ATF(\GSet;\mathcal{I'})_A$ if and only if 
    \begin{itemize}
        \item for each $a\in A_0$, we have $h(a)=\overline\kappa(0;!_B,a)$, where $!_B\colon\emptyset\to B$; and
        \item for each $n>0$ and each $c\colon a\to b$ in $A_n$, we have $h(c)=\overline\kappa\bigl(n;\langle h(a),h(b)\rangle,c\bigr)$. 
    \end{itemize}
    In fact, these conditions \emph{define} $h$ inductively, which means that there exists precisely one such $h$. 
\end{proof}

Thus, applying \cref{prop:base-change-for-Q} to the situation of \cref{defn:TheQ}, we obtain a canonical natural transformation     
\[\begin{tikzpicture}[baseline=-\the\dimexpr\fontdimen22\textfont2\relax ]
      \node(00) at (0,1) {$\WkCats{\omega}$};
      \node(01) at (3,1) {$\WkCats{\omega}$};
      \node(10) at (0,-1) {$\GSet$};
      \node(11) at (3,-1) {$\GSet$,};
      
      \draw [->] (00) to node[auto, labelsize] {$Q$} (01); 
      \draw [->] (01) to node[auto, labelsize] {$U$} (11); 
      \draw [->] (00) to node[auto,swap,labelsize] {$U$} (10); 
      \draw [->] (10) to node[auto,swap,labelsize] {$1_{\GSet}$} (11);   
      \draw [2cell] (1.5,-0.3) to node[auto, swap,labelsize] {$\sigma$} (1.5,0.3); 
\end{tikzpicture}\]
such that $(U,\sigma)$ is a comonad opfunctor from $(\WkCats{\omega},Q)$ to $(\GSet,1_{\GSet})$. 
Concretely, for each weak $\omega$-category $X$, the globular map $\sigma_X\colon X\to QX$ is obtained by applying the function $\overline \kappa_X$ associated to $\varepsilon_X$ inductively:
\begin{itemize}
    \item A $0$-cell $x$ of $X$ is mapped to the $0$-cell $\sigma_X(x)=\overline\kappa_X(0;!_{QX},x)$ of $QX$ where $!_{QX}\colon \emptyset\to QX$.
    \item For $n \ge 1$, an $n$-cell $u\colon x\to y$ in $X$ is mapped to the $n$-cell $\sigma_X(u)=\overline{\kappa}_X\bigl(n;\ppair{\sigma_X(x),\sigma_X(y)},u\bigr)\colon \sigma_X(x)\to\sigma_X(y)$ in $QX$.
\end{itemize}

It follows that $(U,\sigma)$ induces a functor $V\colon \WkCat{\omega}\to \GSet$ between the Kleisli categories making the diagram 
    \[\begin{tikzpicture}[baseline=-\the\dimexpr\fontdimen22\textfont2\relax ]
      \node(00) at (0,1) {$\WkCats{\omega}$};
      \node(01) at (4,1) {$\WkCat{\omega}$};
      \node(10) at (2,-1) {$\GSet$};
      
      \draw [->] (00) to node[auto, labelsize] {$J$} (01); 
      \draw [->] (01) to node[auto, labelsize] {$V$} (10); 
      \draw [->] (00) to node[auto,swap,labelsize] {$U$} (10); 
\end{tikzpicture}\]
commute, where $J$ is the identity-on-objects functor mentioned at the end of \cref{subsec:weak-omega-functors}.
Explicitly, $V$ maps each weak $\omega$-category to its underlying globular set, and each weak $\omega$-functor $p\colon X\rightsquigarrow Y$ to the composite of the globular map $\sigma_X\colon X\to QX$ and the classifying strict $\omega$-functor $\unds{p}\colon QX\to Y$, which we call the \emph{underlying globular map} $\undg{p}$ of the weak $\omega$-functor $p$.
See also \cite[Sections 3 and 4]{Garner_homomorphisms}, where a notion of weak functor between tricategories in the style of \cref{def:weak-functor} is related to the classical definition of trihomomorphism, and in which the underlying globular map of a weak functor is given by a similar construction.

\begin{remark}
\label{rmk:SplEpi}
    One can also induce the functor $V\colon \WkCat{\omega}\to \GSet$ using the universal property of $\WkCat{\omega}$ given in \cite[Theorem 10]{Bourke_Garner_2}, which is different from its universal property as a Kleisli category. In this approach, it suffices to construct a double functor from $\AATF(\WkCats{\omega},\cat{I})$ to $\SSplEpi(\GSet)$, which is possible because there exists a canonical double functor $\AATF(\GSet,\cat{I}')\to \SSplEpi(\GSet)$. The latter corresponds to the identity functor on $\GSet$ (again by \cite[Theorem 10]{Bourke_Garner_2}); see \cref{prop:every-globular-set-is-algebraically-cofibrant}.
\end{remark}

\begin{proposition}
\label{prop:weak-omega-weak-eq-via-und-glob}
    Let $p\colon X\rightsquigarrow Y$ be a weak $\omega$-functor between weak $\omega$-categories. Then $p$ is a weak $\omega$-weak equivalence if and only if its underlying globular map $\undg{p}=\unds{p}\circ \sigma_X\colon X\to Y$ is essentially $\omega$-surjective.
\end{proposition}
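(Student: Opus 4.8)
The plan is to reduce the statement to the three 2-out-of-3 lemmas for essential surjectivity of globular maps, namely \cref{prop:2-out-of-3-gf}, \cref{prop:2-out-of-3-f}, and \cref{prop:2-out-of-3-g}, applied to the factorisation $\undg{p}=\unds{p}\circ\sigma_X$. Here $\sigma_X\colon X\to QX$ is the canonical section and $\unds{p}\colon QX\to Y$ is the classifying strict $\omega$-functor, which is a strict $\omega$-weak equivalence exactly when $p$ is a weak $\omega$-weak equivalence. The one genuinely preparatory step, which I would isolate as the heart of the argument, is to show that $\sigma_X$ itself is essentially $\omega$-surjective.

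To establish this, I would exploit the relation $\varepsilon_X\circ\sigma_X=1_X$. Recall that $\varepsilon_X\colon QX\to X$ is a trivial fibration, hence a strict $\omega$-weak equivalence (as noted in \cref{subsec:weak-omega-functors}), so it is essentially $\omega$-surjective and, by \cref{lem:weak-eq-refl-inv}, it reflects invertible cells. Since the identity $1_X$ is trivially essentially $n$-surjective for every $n$, I would apply \cref{prop:2-out-of-3-f} with $g=\varepsilon_X$ and $f=\sigma_X$, for each $n\in\N$ in turn, to conclude that $\sigma_X$ is essentially $n$-surjective for all $n$, i.e.\ essentially $\omega$-surjective. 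The key subtlety here is that $\sigma_X$ is merely a globular map and not a strict $\omega$-functor, so the strict 2-out-of-3 property (\cref{thm:2-out-of-3}) is not available; this is precisely why the globular-map versions of the lemmas, and the reflection-of-invertibility hypothesis just verified, are needed.

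With the essential $\omega$-surjectivity of $\sigma_X$ in hand, both implications are short. For the forward direction, I would assume $p$ is a weak $\omega$-weak equivalence, so that $\unds{p}$ is essentially $\omega$-surjective; being a strict $\omega$-functor, $\unds{p}$ preserves invertible cells by \cref{prop:strict-omega-functor-preserves-invertible-cells}, so \cref{prop:2-out-of-3-gf} applied to the essentially $\omega$-surjective maps $\sigma_X$ and $\unds{p}$ shows that their composite $\undg{p}=\unds{p}\circ\sigma_X$ is essentially $\omega$-surjective. For the converse, I would assume $\undg{p}$ is essentially $\omega$-surjective; since $\sigma_X$ is essentially $\omega$-surjective and $\unds{p}$ is a strict $\omega$-functor, \cref{prop:2-out-of-3-g} with $f=\sigma_X$ and $g=\unds{p}$ yields that $\unds{p}$ is essentially $\omega$-surjective, which by definition says that $p$ is a weak $\omega$-weak equivalence.

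I expect the only real obstacle to be the preparatory claim that $\sigma_X$ is essentially $\omega$-surjective, and more specifically the careful matching of hypotheses across the three lemmas: each application requires a different compatibility with invertible cells ($\varepsilon_X$ reflects them, $\unds{p}$ preserves them, and $\unds{p}$ is strict), and all of these hold for structural reasons already recorded in the excerpt. Once those are verified, the remainder is bookkeeping, since everything is phrased uniformly in terms of essential $n$-surjectivity of globular maps.
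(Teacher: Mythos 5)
Your proposal is correct and follows exactly the paper's own argument: the paper likewise uses $\varepsilon_X\circ\sigma_X=1_X$ together with \cref{prop:2-out-of-3-f} to get essential $\omega$-surjectivity of $\sigma_X$, and then \cref{prop:2-out-of-3-gf,prop:2-out-of-3-g} for the two implications. The only difference is that you spell out the hypothesis checks (that $\varepsilon_X$ reflects invertible cells via \cref{lem:weak-eq-refl-inv}, and that $\unds{p}$ preserves them) which the paper leaves implicit.
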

\begin{proof}
    Since $\varepsilon_X\circ \sigma_X=1_X$ holds in $\GSet$, the map $\sigma_X\colon X\to QX$ is essentially $\omega$-surjective by \cref{prop:2-out-of-3-f}. Hence $\unds{p}\colon QX\to Y$ is essentially $\omega$-surjective if and only if $\unds{p}\circ \sigma_X\colon X\to Y$ is, by \cref{prop:2-out-of-3-g,prop:2-out-of-3-gf}.
\end{proof}

\begin{lemma}
    \label{lem:sigma-pres-refl-inv}
    For each weak $\omega$-category $X$, the globular map $\sigma_X\colon X\to QX$ preserves and reflects invertible cells.
\end{lemma}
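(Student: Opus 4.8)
The plan is to derive both halves of the statement directly from the identity $\varepsilon_X\circ\sigma_X=1_X$ (valid in $\GSet$, since $\sigma_X$ is by construction a section of the counit), once I record that the counit $\varepsilon_X\colon QX\to X$ \emph{both preserves and reflects} invertible cells. Indeed, $\varepsilon_X$ preserves invertible cells because it is a strict $\omega$-functor (\cref{prop:strict-omega-functor-preserves-invertible-cells}); and it reflects invertible cells because it is a trivial fibration, hence a strict $\omega$-weak equivalence (as noted above), and every $\omega$-weak equivalence reflects invertible cells (\cref{lem:weak-eq-refl-inv}).

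Granting this, reflection for $\sigma_X$ is immediate: if $\sigma_X(u)$ is invertible in $QX$ for some cell $u$ of $X$, then applying preservation for $\varepsilon_X$ shows that $\varepsilon_X(\sigma_X(u))=u$ is invertible in $X$. Preservation for $\sigma_X$ is equally short: if $u$ is invertible in $X$, then $\varepsilon_X(\sigma_X(u))=u$ is invertible, so applying reflection for $\varepsilon_X$ to the cell $\sigma_X(u)$ of $QX$ shows that $\sigma_X(u)$ is invertible. No induction on dimension or appeal to the coinductive principle (\cref{rmk:coinduction}) is needed.

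The only step requiring care is the claim that $\varepsilon_X$ reflects invertible cells, so I expect that to be the crux of the write-up rather than a genuine obstacle. It rests on the fact that trivial fibrations are strict $\omega$-weak equivalences together with \cref{lem:weak-eq-refl-inv}, which itself depends on the full strength of the $2$-out-of-$3$ machinery of \cref{sec:2-out-of-3}. It is worth emphasising that the naive route---attempting to transport an inverse and the two witnessing invertible cells of $u$ directly across $\sigma_X$ and closing a coinduction---is awkward precisely because $\sigma_X$ is not a strict $\omega$-functor, so it does not commute with identities and composites on the nose; the argument above sidesteps this entirely by routing both implications through the strict $\omega$-functor $\varepsilon_X$.
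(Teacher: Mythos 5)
Your proposal is correct and follows essentially the same route as the paper: both arguments exploit the retraction identity $\varepsilon_X\circ\sigma_X=1_X$, using that $\varepsilon_X$ preserves invertible cells as a strict $\omega$-functor (\cref{prop:strict-omega-functor-preserves-invertible-cells}) and reflects them as a strict $\omega$-weak equivalence (\cref{lem:weak-eq-refl-inv}). The paper's proof is just a one-line compression of exactly this argument.
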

\begin{proof}
    This follows from the fact that the retraction $\varepsilon_X$ of $\sigma_X$, being a strict $\omega$-weak equivalence, reflects and preserves invertible cells (\cref{lem:weak-eq-refl-inv,prop:strict-omega-functor-preserves-invertible-cells}).
\end{proof}

\begin{proposition}
\label{lem:und-glob-pres-inv}
    The underlying globular map of a weak $\omega$-functor preserves invertible cells.
\end{proposition}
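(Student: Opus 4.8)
The plan is to exploit the factorisation $\undg{p} = \unds{p} \circ \sigma_X$ through which the underlying globular map was defined, and to observe that the property ``preserves invertible cells'' is trivially closed under composition of globular maps. Indeed, if a globular map $h$ sends invertible cells to invertible cells and likewise for $h'$, then for any invertible cell $u$ we have that $h(u)$ is invertible, hence $h'(h(u))$ is invertible, so $h' \circ h$ preserves invertible cells. Thus it suffices to verify that each of the two factors $\sigma_X \colon X \to QX$ and $\unds{p} \colon QX \to Y$ preserves invertible cells.

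First I would record that $\sigma_X$ preserves invertible cells: this is one half of \cref{lem:sigma-pres-refl-inv} (which in turn rests on the fact that its retraction $\varepsilon_X$ is a strict $\omega$-weak equivalence, and so preserves and reflects invertibility). Next I would note that the classifying strict $\omega$-functor $\unds{p}$, being a genuine strict $\omega$-functor $QX \to Y$, preserves invertible cells by \cref{prop:strict-omega-functor-preserves-invertible-cells}. Composing these two observations with the elementary closure remark above yields the claim that $\undg{p} = \unds{p} \circ \sigma_X$ preserves invertible cells.

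I do not anticipate any genuine obstacle here: the entire content has already been isolated in the preceding results, and the statement is essentially a bookkeeping consequence of the definition of $\undg{p}$ together with \cref{lem:sigma-pres-refl-inv} and \cref{prop:strict-omega-functor-preserves-invertible-cells}. The only point worth stating explicitly is that one uses \emph{only} the preservation half of \cref{lem:sigma-pres-refl-inv}; reflection of invertibility is not needed for this direction, although it is the natural companion fact and is what makes $\sigma_X$ behave like an ``inclusion of a deformation retract'' at the level of invertible cells.
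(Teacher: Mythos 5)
Your proposal is correct and is exactly the paper's own argument: the paper proves this proposition by citing \cref{prop:strict-omega-functor-preserves-invertible-cells} and \cref{lem:sigma-pres-refl-inv}, i.e., by composing the preservation of invertible cells by $\sigma_X$ with that by the strict $\omega$-functor $\unds{p}$, just as you do. Your only addition is spelling out the (trivial) closure of the preservation property under composition and noting that only the preservation half of \cref{lem:sigma-pres-refl-inv} is needed, both of which the paper leaves implicit.
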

\begin{proof}
    By \cref{prop:strict-omega-functor-preserves-invertible-cells,lem:sigma-pres-refl-inv}.
\end{proof}

\begin{proposition}
    The underlying globular map of a weak $\omega$-weak equivalence reflects invertible cells and is essentially $\omega$-injective.
\end{proposition}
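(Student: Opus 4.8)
The plan is to exploit the factorisation $\undg{p}=\unds{p}\circ\sigma_X$ of the underlying globular map and reduce both assertions to facts already established for each factor. For the reflection of invertible cells, I would argue as follows. Suppose $u$ is a cell of $X$ such that $\undg{p}(u)=\unds{p}(\sigma_X(u))$ is invertible in $Y$. Since $p$ is a weak $\omega$-weak equivalence, its classifying strict $\omega$-functor $\unds{p}\colon QX\to Y$ is a strict $\omega$-weak equivalence, and hence reflects invertible cells by \cref{lem:weak-eq-refl-inv}; thus $\sigma_X(u)$ is invertible in $QX$. As $\sigma_X$ reflects invertible cells by \cref{lem:sigma-pres-refl-inv}, it follows that $u$ is invertible in $X$. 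This shows that $\undg{p}$ reflects invertible cells.

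For essential $\omega$-injectivity, I would combine the reflection property just established with essential $\omega$-surjectivity, mirroring the proof of \cref{prop:ess-inj}. Since $p$ is a weak $\omega$-weak equivalence, \cref{prop:weak-omega-weak-eq-via-und-glob} guarantees that $\undg{p}$ is essentially $\omega$-surjective, hence essentially $(n+1)$-surjective for every $n\in\N$. Applying \cref{lem:surj-implies-inj} to the globular map $\undg{p}$, which reflects invertible cells, yields that $\undg{p}$ is essentially $n$-injective for every $n\in\N$, i.e., essentially $\omega$-injective.

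Since every ingredient---the reflection of invertibility along $\unds{p}$ and along $\sigma_X$, the essential $\omega$-surjectivity of $\undg{p}$, and the passage from surjectivity to injectivity---has already been isolated in the preceding results, there is no genuine obstacle here: the statement is a formal consequence of \cref{lem:weak-eq-refl-inv,lem:sigma-pres-refl-inv,prop:weak-omega-weak-eq-via-und-glob,lem:surj-implies-inj}. The only mild point to bear in mind is that $\undg{p}$ is a \emph{mere} globular map rather than a strict $\omega$-functor, so I must invoke the globular-map versions of these results (each of which is stated at exactly that level of generality) rather than any statement specific to strict $\omega$-functors.
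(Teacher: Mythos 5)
Your proof is correct and follows exactly the paper's own route: the paper likewise deduces the reflection of invertible cells from \cref{lem:weak-eq-refl-inv,lem:sigma-pres-refl-inv} applied to the factorisation $\undg{p}=\unds{p}\circ\sigma_X$, and then obtains essential $\omega$-injectivity from that reflection property together with \cref{prop:weak-omega-weak-eq-via-und-glob,lem:surj-implies-inj}. Your write-up simply spells out the details that the paper leaves implicit.
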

\begin{proof}
    The first statement follows from \cref{lem:weak-eq-refl-inv,lem:sigma-pres-refl-inv}. The second statement follows from the first, \cref{prop:weak-omega-weak-eq-via-und-glob}, and \cref{lem:surj-implies-inj}. 
\end{proof}

\begin{proposition}
    The class of weak $\omega$-weak equivalences is closed under retracts in the arrow category $\WkCat{\omega}^\mathbf{2}$.
\end{proposition}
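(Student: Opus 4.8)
The plan is to reduce the claim to the retract-closure of essentially $\omega$-surjective globular maps established in \cref{prop:weak-eq-retract}, via the characterisation of weak $\omega$-weak equivalences in terms of underlying globular maps (\cref{prop:weak-omega-weak-eq-via-und-glob}). Suppose $p\colon X\rightsquigarrow Y$ is a retract in $\WkCat{\omega}^\mathbf{2}$ of a weak $\omega$-weak equivalence $p'\colon X'\rightsquigarrow Y'$, witnessed by morphisms $(i,j)\colon p\to p'$ and $(r,s)\colon p'\to p$ in $\WkCat{\omega}^\mathbf{2}$ with $(r,s)\circ(i,j)=(1_X,1_Y)$.

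First I would apply the functor $V\colon \WkCat{\omega}\to\GSet$, which sends each weak $\omega$-functor to its underlying globular map, to this retract diagram. Since any functor preserves retracts, this yields a commutative diagram of globular maps
\begin{equation*}
\begin{tikzpicture}[baseline=-\the\dimexpr\fontdimen22\textfont2\relax ]
      \node(00) at (0,1) {$X$};
      \node(01) at (2.5,1) {$X'$};
      \node(10) at (0,-1) {$Y$};
      \node(11) at (2.5,-1) {$Y'$};
      \node(02) at (5,1) {$X$};
      \node(12) at (5,-1) {$Y$};

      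\draw [->] (00) to node[auto, swap, labelsize] {$\undg{i}$} (01);
      \draw [->] (01) to node[auto, labelsize] {$\undg{p'}$} (11);
      \draw [->] (00) to node[auto,swap,labelsize] {$\undg{p}$} (10);
      \draw [->] (10) to node[auto,labelsize] {$\undg{j}$} (11);
      \draw [->] (01) to node[auto, swap, labelsize] {$\undg{r}$} (02);
      \draw [->] (11) to node[auto, labelsize] {$\undg{s}$} (12);
      \draw [->] (02) to node[auto,labelsize] {$\undg{p}$} (12);
      \draw [->, bend left=30] (00) to node[auto, labelsize] {$1_X$} (02);
      \draw [->, bend right=30] (10) to node[auto, swap, labelsize] {$1_Y$} (12);
\end{tikzpicture}
\end{equation*}
exhibiting $\undg{p}$ as a retract of $\undg{p'}$ in $\GSet^\mathbf{2}$ (here I use that $V$ preserves identities and composites, so the bent arrows become identity globular maps).

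Next I would invoke \cref{prop:weak-omega-weak-eq-via-und-glob}: since $p'$ is a weak $\omega$-weak equivalence, $\undg{p'}$ is essentially $\omega$-surjective, i.e.\ essentially $n$-surjective for every $n\in\N$. The codomain retraction appearing in the image diagram is $\undg{s}$, the underlying globular map of the weak $\omega$-functor $s$, which preserves invertible cells by \cref{lem:und-glob-pres-inv}. This is exactly the hypothesis on $q$ required by \cref{prop:weak-eq-retract}, so applying that proposition for each $n$ in turn shows that $\undg{p}$ is essentially $n$-surjective for all $n$, hence essentially $\omega$-surjective. Appealing once more to \cref{prop:weak-omega-weak-eq-via-und-glob} then gives that $p$ is a weak $\omega$-weak equivalence.

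I expect no real obstacle here: the whole argument is a transport of the already-proved \cref{prop:weak-eq-retract} along the functor $V$. The only point to verify with any care is that the codomain retraction in the image diagram is genuinely the underlying globular map of a weak $\omega$-functor, so that the invertible-cell-preservation hypothesis of \cref{prop:weak-eq-retract} is supplied by \cref{lem:und-glob-pres-inv}; this is immediate from the functoriality of $V$ and the definition $\undg{s}=V(s)$.
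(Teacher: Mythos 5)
Your proof is correct and follows essentially the same route as the paper's own (much terser) argument: pass to underlying globular maps via \cref{prop:weak-omega-weak-eq-via-und-glob}, apply \cref{prop:weak-eq-retract}, and supply its invertible-cell-preservation hypothesis using \cref{lem:und-glob-pres-inv}. Your explicit use of the functoriality of $V$ to transport the retract diagram to $\GSet^\mathbf{2}$ is exactly the content behind the paper's phrase ``we can work at the level of underlying globular maps.''
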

\begin{proof}
    By \cref{prop:weak-omega-weak-eq-via-und-glob}, we can work at the level of underlying globular maps. We can apply \cref{prop:weak-eq-retract} thanks to \cref{lem:und-glob-pres-inv}.
\end{proof}

\begin{remark}
    Given a weak $\omega$-weak equivalence $p \colon X \rightsquigarrow Y$, one might expect to be able to construct its (pseudo-)inverse $q \colon Y \rightsquigarrow X$.
    Although this is entirely plausible, it seems rather difficult to actually achieve it for the following two reasons.

    Firstly, we do not currently have a way of constructing \emph{any} weak $\omega$-functor $Y \rightsquigarrow X$ from just the data of $p$ (even assuming the axiom of choice).
    The most natural approach seems to be somehow factorising the strict $\omega$-weak equivalence $\unds{p} \colon QX \to Y$ as $\begin{tikzcd}
        QX \arrow [r, "i"] & Z \arrow [r, "f"] & Y
    \end{tikzcd}$ in such a way that $f$ is an (algebraic) trivial fibration and $i$ is a strict $\omega$-functor admitting a retraction $r$ in $\WkCats{\omega}$.
    The initiality of the algebraic trivial fibration $\varepsilon_Y \colon QY \to Y$ then induces the first factor of the following composite, which classifies the putative inverse $q$:
    \[
    \unds{q} \colon
    \begin{tikzcd}
        QY \arrow [r] & Z \arrow [r, "r"] & QX \arrow [r, "\varepsilon_X"] & X
    \end{tikzcd}
    \]
    The problem is obtaining the factorisation $\unds{p} = fi$.
    We can apply the small object argument to $\unds{p}$ with respect to $\mathcal{I}$ in \cref{defn:TheQ},
    but then we do not know if $i$ would have a retraction $r$.
    Instead of this ``(cofibration, trivial fibration)-factorisation,'' we might want to try the ``(trivial cofibration, fibration)-factorisation,'' but in this case we do not even know what the ``generating trivial cofibrations'' should be.

    Secondly, even if we can construct a weak $\omega$-functor $q \colon Y \rightsquigarrow X$, we do not know what it means for $q$ to be an inverse of $p$, let alone how to prove it.
    A potential approach might be to define a notion of pseudo-natural equivalence using \emph{cylinders} (and in particular a weak $\omega$-categorical version of $\Gamma X$ in \cite{Lafont_Metayer_Worytkiewicz_folk_model_str_omega_cat}), although we currently have no concrete ideas beyond that.
\end{remark}

\subsection*{Acknowledgements}
We thank John Bourke and Richard Garner for their helpful comments, and
the anonymous referees for their careful reading and detailed suggestions, which helped improve the presentation of this paper. 

The first-named author acknowledges the support of JSPS Overseas Research Fellowships, an Australian Research Council Discovery Project DP190102432, and the Grant Agency of the Czech Republic under the grant 22-02964S.
The second-named author is supported by JSPS Research Fellowship for Young Scientists and JSPS KAKENHI Grant Number JP23KJ1365.
The third-named author is grateful to the JSPS for the support of KAKENHI Grant Numbers JP21K20329, JP23K12960, and JP24KJ0126.

\appendix
\newcommand{\Mon}{\mathbf{Mon}}
\newcommand{\GOpd}{\mathbf{GOpd}}
\newcommand{\OC}{\mathbf{OC}}
\newcommand{\CC}{\mathbf{CC}}
\section{\texorpdfstring{$\varepsilon_X$}{ε\textunderscore X} is bijective on objects}
\label{apx:epsilon-bo}

Here we prove \cref{prop:epsilonBij}, which states that the strict $\omega$-functor $\varepsilon_X\colon QX\to X$ is bijective on objects for any weak $\omega$-category $X$.

We first show that the set $(L1)_0$ of $0$-cells of the globular set $L1$ is a singleton.
(Although it should be possible, as mentioned in \cite[Proof of Theorem 10]{Garner_univ}, to deduce this statement from results in \cite{Cheng_monad}, we found it rather non-trivial to actually do so, which is why give a different proof here.)
To this end, we recall the notion of \emph{globular operad} \cite{Leinster_book}, which is equivalent to that of cartesian monad over $T$. 

In general, let $\mathcal{E}$ be a finitely complete category and $S$ a cartesian monad on $\mathcal{E}$. 
We define $[\mathcal{E},\mathcal{E}]_{\mathrm{cart}}$ as the category of all pullback-preserving endofunctors on $\cat{E}$ and cartesian natural transformations between them. Note that $[\mathcal{E},\mathcal{E}]_{\mathrm{cart}}$ is a strict monoidal category under composition of endofunctors, and $S$ is a monoid object therein.
Now, the evaluation functor $\mathrm{ev}_1\colon[\mathcal{E},\mathcal{E}]_{\mathrm{cart}}/S\to\mathcal{E}/S1$ at the terminal object $1\in\mathcal{E}$ is an equivalence of categories \cite[3.1 and 3.2]{Kelly_club_data_type}, and hence we can transport the (strict) monoidal structure on the slice monoidal category $[\mathcal{E},\mathcal{E}]_{\mathrm{cart}}/S$ along $\mathrm{ev}_1$ to induce a (not necessarily strict) monoidal structure on $\mathcal{E}/S1$. The monoidal unit in $\cat{E}/S1$ is the component $\eta^S_1\colon 1\to S1$ of the unit $\eta^S$ of the monad $S$ at the terminal object $1$, and given two objects $f\colon X\to S1$ and $g\colon Y\to S1$ of $\cat{E}/S1$, their monoidal product $f\otimes g$ in $\cat{E}/S1$ is the composite of the top horizontal row in the following diagram.
  \[
    \begin{tikzcd}
      SX\times_{S1} Y
      \ar[r]
      \ar[d]
      \ar[rd,phantom,"\lrcorner"very near start]
        &
        SX
        \ar[d,"{S!}"]
        \ar[r,"{Sf}"] & 
        S^21 \ar[r,"{\mu^S_1}"]
        &
        S1
      \\
      Y
      \ar[r,"{g}"']
        &
        S1
    \end{tikzcd}
  \]
An \emph{$S$-operad} \cite{Leinster_book} is a monoid object in $\mathcal{E}/S1$; from the monoidal equivalence $\mathrm{ev}_1\colon[\mathcal{E},\mathcal{E}]_{\mathrm{cart}}/S\to\mathcal{E}/S1$, it is clear that an $S$-operad corresponds to a cartesian monad on $\cat{E}$ equipped with a cartesian monad morphism to $S$.

A \emph{globular operad} is a $T$-operad where $T$ is the free strict $\omega$-category monad on $\GSet$. Therefore a globular operad consists of an object $a\colon O\to T1$ and morphisms $e\colon \eta^T_1\to a$ and $m\colon a\otimes a\to a$ in $\GSet/T1$ satisfying the monoid axioms. In particular, $e$ is a globular map $e\colon 1\to O$, and hence it gives rise to an $n$-cell $e_n\in O_n$ of the globular set $O$ for each natural number $n$.
We define the \emph{normal part} $O^\mathrm{norm}$ of the $T$-operad $O=(a\colon O\to T1,e,m)$ as the globular subset $O^\mathrm{norm}\subseteq O$ defined as follows.
\begin{itemize}
    \item There is only one $0$-cell in $O^\mathrm{norm}$, namely $e_0$.
    \item For $n>0$, the set $O^\mathrm{norm}_n$ of all $n$-cells of $O^\mathrm{norm}$ consists of all $n$-cells $x\in O_n$ of $O$ satisfying $s^O_0(x)=e_0=t^O_0(x)$.
\end{itemize}
Note that to give a globular map $X\to O^\mathrm{norm}$ from $X\in \GSet$ is equivalent to giving a globular map $X\to O$ which maps every $0$-cell of $X$ to $e_0$.

We claim that $O^\mathrm{norm}$ (more precisely, the subobject $O^\mathrm{norm}\xhookrightarrow{} O\xrightarrow{a} T1$ of $a$ in $\GSet/T1$) is closed under the monoid structure of $O$, and hence defines a globular suboperad of $O$. 
It is clear that the globular map $e\colon 1\to O$ factors through $O^\mathrm{norm}$.
To see that $O^\mathrm{norm}$ is closed under $m$, observe that the monad $T$ is trivial at the dimension $0$, i.e., the unit component $\eta^T_X\colon X\to TX$ indues a bijection $X_0\cong (TX)_0$ for any globular set $X$. 
This implies that the functor $(-)_0\colon \GSet/T1\to \Set$, mapping each object $f\colon X\to T1$ of $\GSet/T1$ to the set $X_0$, is strong monoidal with respect to the cartesian monoidal structure on $\Set$. 
Hence the set $O_0$ of $0$-cells of the globular operad $O$ is an ordinary monoid, with $e_0\in O_0$ as the unit element. 
Now it is easy to see that $O^\mathrm{norm}$ is closed under $m$, using the fact that $m$ commutes with the source and target operations.

The notion of \emph{contraction} on an object in $\GSet/T1$ is defined just as in \cref{subsec:T}; see \cite[Section~9.1]{Leinster_book} for details.
If the underlying object $a\colon O\to T1$ of a globular operad $O=(a\colon O\to T1, e,m)$ has a contraction $\kappa$, then $\kappa$ restricts to the normal part $O^\mathrm{norm}$ of $O$ because the globular map $\iota_n\colon \partial G^n\to G^n$ is bijective on $0$-cells for all $n > 0$.

Now, the globular operad $O_L$ for weak $\omega$-categories (which corresponds to the monad $L$ for weak $\omega$-categories) is defined as the initial object in the category of globular operads with contraction; see \cite[Section~9.2]{Leinster_book} for details. 
The above observations imply that the normal part $O^\mathrm{norm}_L$ of $O_L$ is also a globular operad with contraction, and the inclusion map $O^{\mathrm{norm}}_L\to O_L$, being a monomorphism into an initial object, must then be an isomorphism. Therefore we have $O^{\mathrm{norm}}_L\cong O_L$, i.e., the set $(O_L)_0$ is a singleton. 
In terms of the monad $L$, this means that the set $(L1)_0$ is a singleton. 

It follows from the cartesianness of the unit $\eta^L$ that its component $\eta^L_X\colon X\to LX$ induces a bijection $X_0\cong (LX)_0$ for any globular set $X$.

\begin{proposition}
  \label{prop:cosk}
  The functor $(-)_0\colon\WkCats\omega\to\Set$, mapping each weak $\omega$-category $X$ to its set $X_0$ of objects, has a right adjoint $\mathrm{Cosk}\colon \Set\to \WkCats\omega$.
\end{proposition}
\begin{proof}
  Observe that we have a lax monad morphism (i.e., a \emph{monad functor} in the sense of \cite{Street_FTM}) $\bigl((-)_0,(-)_0\circ\eta^L\bigr)\colon(\GSet,L)\to(\Set,1_\Set)$, where $(-)_0\colon \GSet\to \Set$ is the functor mapping each globular set $X$ to its set $X_0$ of $0$-cells. The functor $(-)_0\colon\WkCats\omega\to\Set$ is induced by this lax monad morphism.
  Now, the observation immediately preceding the current proposition implies that $\bigl((-)_0,(-)_0\circ\eta^L\bigr)$ is in fact a \emph{strong} monad morphism.

  The underlying functor $(-)_0\colon \GSet\to \Set$ of this strong monad morphism has both left and right adjoints, given by left and right Kan extensions along the (fully faithful) inclusion $\ulcorner0\urcorner\colon1\to \G$.
  In particular, the right adjoint $\mathrm{Ran}_{\ulcorner{0}\urcorner}\colon \Set\to \GSet$ of $(-)_0\colon \GSet\to \Set$ induces (by \emph{doctrinal adjunction} \cite{Kelly:doctrinal}) a right adjoint of  $\bigl((-)_0,(-)_0\circ\eta^L\bigr)\colon(\GSet,L)\to(\Set,1_\Set)$ in the $2$-category of monads and lax monad morphisms, which induces the desired right adjoint $\mathrm{Cosk}\colon \Set\to \WkCats{\omega}$ of $(-)_0\colon\WkCats{\omega}\to \Set$.
\end{proof}
\begin{proof}[Proof of \cref{prop:epsilonBij}]
For any weak $\omega$-category $Y$, we denote the unique strict $\omega$-functors $\emptyset\to Y$ and $Y\to 1$ by $!_Y$ and $!^Y$, respectively.

Let $X$ be a weak $\omega$-category.
For any object $x\in X_0$ of $X$, let $\overline x\in (QX)_0$ be the object of $QX$ corresponding to the diagonal strict $\omega$-functor $\overline \kappa_X(0;!_{QX},x)$ below:
\begin{equation*}
\begin{tikzpicture}[baseline=-\the\dimexpr\fontdimen22\textfont2\relax ]
      \node(00) at (0,1) {$\emptyset$};
      \node(01) at (2,1) {$QX$};
      \node(10) at (0,-1) {$FG^0$};
      \node(11) at (2,-1) {$X$.};
      
      \draw [->] (00) to node[auto, labelsize] {$!_{QX}$} (01); 
      \draw [->] (01) to node[auto, labelsize] {$\varepsilon_X$} (11); 
      \draw [->] (00) to node[auto,swap,labelsize] {$F\iota_0$} (10); 
      \draw [->] (10) to node[auto,swap,labelsize] {$x$} (11);   
      \draw [->, dashed] (10) to node[midway,fill=white,labelsize] {$\overline \kappa_X(0;!_{QX},x)$} (01);
\end{tikzpicture}
\end{equation*}
We claim that the function $(\overline -)\colon X_0\to (QX)_0$, mapping $x\in X_0$ to $\overline x\in (QX)_0$, is an inverse of the function $(\varepsilon_X)_0\colon (QX)_0\to X_0$, mapping $y\in (QX)_0$ to $\varepsilon_X(y)\in X_0$.
For any $x\in X_0$, we clearly have $\varepsilon_X(\overline{x})=x$ by the commutativity of the above diagram.

It suffices to show that the function $(\overline -)\colon X_0\to (QX)_0$ is surjective.
To this end, consider a two-element set $S=\{a,b\}$, and let $E=\mathrm{Cosk}\,S\in\WkCats{\omega}$. 
The adjointness $(-)_0\dashv \mathrm{Cosk}$ implies that $E$ is right orthogonal to $\{F\iota_n\,|\,n\ge1\}$.
Hence, in order to equip the unique strict $\omega$-functor $!^E\colon E\to 1$ to the terminal weak $\omega$-category $1$ with a structure $\overline \lambda$ of an algebraic trivial fibration, we only need to specify $\overline\lambda(0;!_E,\ast)\colon G^0\to E$.
We set it to the transpose of the function $(G^0)_0\to S$ mapping the unique element of $(G^0)_0$ to $a$. 

By the universality of $(\varepsilon_X\colon QX\to X,\overline\kappa_X)$, the unique strict $\omega$-functor $!^X\colon X\to 1$ uniquely extends to a morphism $(h,!^X)\colon (\varepsilon_X,\overline{\kappa}_X)\to (!^E,\overline\lambda)$ in $\ATF(\WkCats\omega;\mathcal{I})$.
The strict $\omega$-functor $h\colon QX\to E$ is necessarily the transpose of some function $\hat{h}\colon (QX)_0\to S$, and one can check that the pair $(h,!^X)$ is a morphism $(\varepsilon_X,\overline{\kappa}_X)\to (!^E,\overline \lambda)$ in $\ATF(\WkCats\omega;\mathcal{I})$ if and only if we have $\hat{h}(y)=a$ for all $y$ in the image of the function $(\overline -)\colon X_0\to (QX)_0$.
Therefore the uniqueness of such $h$, or equivalently that of such $\hat h$, implies that $(\overline -)\colon X_0\to (QX)_0$ is surjective. 
\end{proof}

\bibliographystyle{plain}
\bibliography{mybib}

\end{document}